\documentclass[a4paper,reqno,10pt]{amsart}
\usepackage{amsfonts} 
\usepackage{amssymb}
\usepackage{amsthm}
\usepackage{amsmath} 
\usepackage{mathrsfs}
\usepackage{dsfont}
\usepackage{stmaryrd}
\usepackage[english]{babel} 
\textwidth14.1cm 
\textheight22cm
\flushbottom     

\usepackage[left=3.5cm,right=3.5cm,top=3.5cm,bottom=3cm,headsep=0.7cm]{geometry}

 
\usepackage{pgf,tikz}
\usetikzlibrary{arrows}
\usetikzlibrary{intersections}
\usepackage{ifthen}

\usepackage[scriptsize,bf]{caption}
\usepackage{floatrow} 


\usepackage{enumerate}
\usepackage{enumitem}


\usepackage{hyperref}


\theoremstyle{plain}
\begingroup
\theoremstyle{plain}
\newtheorem{theorem}{Theorem}[section]

\newtheorem{proposition}[theorem]{Proposition}
\newtheorem{lemma}[theorem]{Lemma}
\theoremstyle{definition}
\newtheorem{definition}[theorem]{Definition}
\theoremstyle{remark}
\newtheorem{remark}[theorem]{Remark}

\endgroup

\theoremstyle{definition}
\theoremstyle{remark}

\numberwithin{equation}{section}



\newcommand{\RR}{\mathbb{R}}
\newcommand{\NN}{\mathbb{N}}
\newcommand{\ZZ}{\mathbb{Z}}

\renewcommand{\SS}{\mathbb{S}}
\newcommand{\A}{\mathcal{A}}
\newcommand{\I}{\mathcal{I}}
\newcommand{\D}{\mathrm{D}}
\renewcommand{\L}{\mathcal{L}}
\newcommand{\V}{\mathcal{V}}
\newcommand{\M}{\mathcal{M}}
\renewcommand{\H}{\mathcal{H}}

\mathsurround=1pt
\mathchardef\emptyset="001F
\renewcommand{\d}[1]{\, \mathrm{d} #1}
\newcommand{\de}{\partial}
\newcommand{\e}{\varepsilon}
\renewcommand{\tilde}{\widetilde}
\newcommand{\x}{{\times}}
\newcommand{\ol}{\overline}
\newcommand{\ul}{\underline}
\newcommand{\sm}{\setminus}
\newcommand{\dist}{{\rm dist}}

\renewcommand{\hat}{\widehat}
\newcommand{\weak}{\rightharpoonup}

\newcommand{\mres}{\mathbin{\vrule height 1.6ex depth 0pt width 0.13ex\vrule height 0.13ex depth 0pt width 1.3ex}}
\newcommand{\integral}[3]{\int \limits_{#1} \! #2 #3}
\newcommand{\PC}{\mathcal{PC}}
\newcommand{\theth}{\theta^{\mathrm{hor}}}
\newcommand{\thetv}{\theta^{\mathrm{ver}}}
\newcommand{\Hh}{H^{\mathrm{hor}}}
\newcommand{\Hv}{H^{\mathrm{ver}}}
\newcommand{\Lloc}{L^1_{\mathrm{loc}}}
\newcommand{\T}{T}
\renewcommand{\ln}{\lambda_n}
\newcommand{\dn}{\delta_n}
\newcommand{\en}{\varepsilon_n}
\DeclareMathOperator{\curl}{curl}
\newcommand{\Dom}{\mathrm{Dom}}
\newcommand{\vc}[2]{\big( \begin{smallmatrix}
    #1 \\ #2 
\end{smallmatrix} \big) }

\makeatletter
\newcommand*{\bigcdot}{}
\DeclareRobustCommand*{\bigcdot}{%
  \mathbin{\mathpalette\bigcdot@{}}%
}
\newcommand*{\bigcdot@scalefactor}{.5}
\newcommand*{\bigcdot@widthfactor}{1.15}
\newcommand*{\bigcdot@}[2]{%
  \sbox0{$#1\vcenter{}$}
  \sbox2{$#1\cdot\m@th$}%
  \hbox to \bigcdot@widthfactor\wd2{%
    \hfil
    \raise\ht0\hbox{%
      \scalebox{\bigcdot@scalefactor}{%
        \lower\ht0\hbox{$#1\bullet\m@th$}%
      }%
    }%
    \hfil
  }%
}
\makeatother

\newcommand{\subcc}{\subset \subset}
\newcommand{\supcc}{\supset \supset}

\author{Marco Cicalese}
\address[Marco Cicalese]{Technische Universit\"at M\"unchen, Germany}
\email{cicalese@ma.tum.de}

\author{Marwin Forster}
\address[Marwin Forster]{Technische Universit\"at M\"unchen, Germany}
\email{marwin.forster@ma.tum.de}

\author{Gianluca Orlando}
\address[Gianluca Orlando]{Technische Universit\"at M\"unchen, Germany}
\email{orlando@ma.tum.de}

\title[Variational analysis of a two-dimensional frustrated spin system]{Variational analysis of a two-dimensional frustrated spin system: emergence and rigidity of chirality transitions}

\begin{document}

\begin{abstract}
    We study the discrete-to-continuum variational limit of the $J_{1}$-$J_{3}$ spin model on the square lattice in the vicinity of the helimagnet/ferromagnet transition point as the lattice spacing vanishes. Carrying out the $\Gamma$-convergence analysis of proper scalings of the energy, we prove the emergence and characterize the geometric rigidity of the chirality phase transitions.
    \end{abstract}

\maketitle

\noindent {\bf Keywords}: $\Gamma$-convergence, Frustrated lattice systems, Chirality transitions. 

\vspace{1em}

\noindent {\bf Mathematics Subject Classification}: 49J45, 49M25, 82A57, 82B20.

\setcounter{tocdepth}{1}
\tableofcontents

\section{Introduction}
 
Complex geometric structures may arise in two-dimensional magnetic compounds as a result of interactions of exchange or of anisotropic nature. Their emergence has attracted the attention of the statistical mechanics community and has been the object of many studies in the late years (see~\cite{Diep} and the references therein for some recent overviews on this topic). The modelling and the study of appropriate lattice energies that can be characterized in terms of a minimal number of parameters and whose ground states display such interesting structures is strictly related to the so-called frustration mechanisms. The latter refers to the presence of conflicting interatomic forces in the model that result, for instance, from the competition of short-range ferromagnetic (F) and antiferromagnetic (AF) interactions as in the model we are going to consider here. This is known as the $J_{1}$-$J_{3}$ F-AF classical spin model on the square lattice (see \cite{rastelli1979non}). To each configuration of two-dimensional unitary spins on the square lattice, namely $u \colon  \ZZ^2 \to \SS^1$, we associate the energy
\begin{equation*}
    E(u) := - J_1 \sum_{|\sigma - \sigma'| = 1}   u^\sigma \cdot u^{\sigma'}   + J_3\sum_{|\sigma - \sigma''| = 2}   u^\sigma \cdot u^{\sigma''}   ,
\end{equation*}
where $J_1$ and $J_3$ are positive constants (the interaction parameters of the model) and for every lattice point $\sigma \in \ZZ^2$ we denote by~$u^\sigma$ the value of the spin variable~$u$ at~$\sigma$. The first term in the energy is ferromagnetic as it favors aligned nearest-neighboring spins, whereas the second one is antiferromagnetic as it favors antipodal third-neighboring spins. In the case where $J_3 = 0$ the energy describes the so-called $XY$ model, whose variational analysis has been carried out in~\cite{AliCic, BadCicDLPon, CicOrlRuf} also in connection with the theory of dislocations~\cite{Pon, AliCicPon, AliDLGarPon}. 
When $J_3 > 0$, the ferromagnetic and the antiferromagnetic terms in $E$ compete. For an appropriate choice of the interaction parameters, the competition gives rise to frustration and to ground states that can present the complex structures mentioned above, here in the form of helices of possibly different chiralities (for recent experimental evidences see \cite{schoenherr2018topological, uchida2006real}).

To study the behavior of the energy $E$ as the number of particles diverges, we follow the scheme below. We fix a bounded open set $\Omega \subset \RR^2$ and  we scale the lattice spacing by a small parameter $\ln > 0$. Given $u \colon \ln \ZZ^2 \cap \Omega \to \SS^1$ and denoting the components of $\sigma$ by $(i,j) \in \ZZ \x \ZZ$ and by $u^{i,j}$ the value of $u$ at $(\ln i, \ln j)$, the study of the energy per particle in $\Omega$ can be reduced to considering the energy
\begin{equation} \label{eq:def of En}
   E_n(u;\Omega):= - \alpha \sum_{(i,j)} \ln^2 \big( u^{i,j} \cdot u^{i+1,j} + u^{i,j} \cdot u^{i,j+1} \big) + \sum_{(i,j)} \ln^2 \big( u^{i,j} \cdot u^{i+2,j} + u^{i,j} \cdot u^{i,j+2} \big) \, ,
\end{equation}
where $\alpha=J_1/J_3$ and the sums are taken over all $(i,j) \in \ZZ^2$ such that the evaluations of~$u$ above are defined. In this paper we are interested in the case $\alpha = \alpha_n = 4(1-\dn) \to 4$, that is when the system is close to the so-called ferromagnet/helimagnet transition point~\cite{DmiKri}.

For spin chains, the variational analysis as $\ln \to 0$ of the one-dimensional version~$E_n^{\mathrm{1d}}$ of~$E_n$ in this regime has been carried out in~\cite{CicSol} (see also~\cite{SciVal} for the analysis in other regimes of~$\alpha$). 
Even though our analysis relies on the one-dimensional result in \cite{CicSol}, it cannot be simply reduced to it and it presents additional difficulties peculiar of the higher-dimensional setting. In order to better describe them and summarize the content of this paper, we first recall some relevant results in the one-dimensional case.

The starting point of the analysis in \cite{CicSol} is the characterization the ground states of the physical system. 
These are referred to as helical spin chains: each spin is indeed  rotated with respect to its left neighbor by the same angle given by either $\arccos(1-\dn)$ or~$-\arccos(1-\dn)$. 
The two possible choices for the angle correspond to either counterclockwise or clockwise spin rotations, defining in this way a positive or a negative chirality of the spin chain, respectively. 
The description of the discrete-to-continuum limit as $\ln,\dn \to 0$  of~$E_n^{\mathrm{1d}}$ in terms of the spin variable $u$ results to provide poor information on the system, as the minimal energy can be attained also by fine mixtures of the spin field. 
Indeed, by~\cite[Proposition~4.1]{CicSol}, sequences of spin fields with equibounded energy are compact only with respect to the weak* topology of $L^\infty$ and the $\Gamma$-limit turns out to be constant on every field with values in the unit ball (cf.~\cite{AliCicGlo} for homogenization results for a general class of spin systems). 
In particular, the limit does not detect any kind of chirality transitions.

With the purpose of describing chirality transitions, one needs to carry out a finer analysis in the spirit of the development by $\Gamma$-convergence introduced in~\cite{BraTru}. 
One defines a new functional $H_n^{\mathrm{1d}}$ by properly scaling the energy $E_n^{\mathrm{1d}}$ referred to its minimum. 
In~\cite{CicSol} it has been shown that the relevant functional that captures chirality transitions is $H_n^{\mathrm{1d}}:= \frac{1}{\sqrt{2} \ln \dn^{3/2}} (E_n^{\mathrm{1d}} - \min E_n^{\mathrm{1d}})$. 
The $\Gamma$-convergence analysis of $H_n^{\mathrm{1d}}$ is carried out with respect to the $L^1$-convergence of the order parameter $w^i := \sqrt{\frac{2}{\dn}} \sin( \frac{1}{2} \theta^i )$, where~$\theta^i$ is the oriented angle between the two adjacent spins at positions~$\ln i$ and~$\ln (i+1)$, namely~$u^i$ and~$u^{i+1}$. 
One can interpret $w$ as a chirality order parameter. Indeed, if $u$ is a spin field with a constant chirality, then $w= 1$ ($w = -1$, resp.) if $u$ rotates counterclockwise (clockwise, resp.)  by the optimal angle $\arccos(1-\dn)$. 
The energy $H_n^{\mathrm{1d}}$ can be manipulated in a way that it can be recast as a discrete version of the Modica-Mortola functional in terms of the variable $w$ with transition length $\en = \frac{\ln}{\sqrt{2 \dn}}$. 
With the due care, a result contained in~\cite{Bra-Yip} leads to the well-known compactness and $\Gamma$-limit of the standard continuum Modica-Mortola functional~\cite{Mod, ModMor} with double well potential $W(s) = (1-s^2)^2$. 
In particular, assuming that $\en \to 0$, the limit functional is finite on $BV$ functions $w$ taking values in~$\{1,-1\}$ and counts (with a suitable multiplicative constant) the number of jump points of $w$, that is, the number of chirality transitions of the system, cf.~\cite[Theorem~4.2-(i)]{CicSol}. 
While chirality transitions in the one-dimensional setting can be satisfactorily described through the previous analysis, it will be clear in what follows that their description in the two-dimensional case requires additional ideas. 

We start the analysis of the two-dimensional problem by observing that the energy $E_n$ can be written as the sum of the one-dimensional energy $E_n^\mathrm{1d}$ on rows and columns of $\ln \ZZ^2$. 
In particular, each row and each column of the ground state configuration of the two-dimensional spin system is a helical spin chain, see Figure~\ref{fig:ground states}.  
Following the scheme already described above for the one-dimensional case, we refer the energy to its minimum and we scale it to obtain the energy~$H_n$. Manipulating~\eqref{eq:def of En} for $\alpha=\alpha_n = 4(1-\dn)$ and neglecting interactions at the boundary of $\Omega$ we get 
\begin{equation*}
        H_n(u;\Omega)  :=\frac{1}{\sqrt{2} \ln \dn^{3/2} }  \frac{1}{2}  \ \ln^2    \sum_{(i,j)} \Big| u^{i+2,j} - \frac{\alpha_n}{2} u^{i+1,j}  + u^{i,j} \Big|^2 + \Big| u^{i,j+2} - \frac{\alpha_n}{2} u^{i,j+1}  + u^{i,j} \Big|^2  .
\end{equation*}
In this paper we study the $\Gamma$-limit of $H_n$ as $\ln, \dn \to 0$ with respect to the $L^1$-convergence of the horizontal and vertical chirality order parameters $w$ and $z$ defined, respectively, by 
\begin{equation*}
    w^{i,j} := \sqrt{\frac{2}{\dn}} \sin\Big( \frac{1}{2} (\theth)^{i,j} \Big) \, , \quad  z^{i,j} := \sqrt{\frac{2}{\dn}} \sin\Big( \frac{1}{2} (\thetv)^{i,j} \Big) \, ,
\end{equation*}
where $(\theth)^{i,j}$ is the oriented angle between the two adjacent spins~$u^{i,j}$ and~$u^{i+1,j}$ and $(\thetv)^{i,j}$ is the oriented angle between the two adjacent spins~$u^{i,j}$ and~$u^{i,j+1}$. In terms of the pair of order parameters $(w,z)$, each of the ground states is characterized by one of the four pairs of (horizontal and vertical) chiralities $(1,1),(1,-1),(-1,1),(-1,-1)$, see Figure~\ref{fig:ground states}. 
In other words, all the columns of $\ln \ZZ^2$ are helical spin chains that rotate by the optimal angle $\arccos(1{-}\dn)$ and have the same chirality, the same being true for all the rows of $\ln \ZZ^2$.
\begin{figure}[H]
    \includegraphics{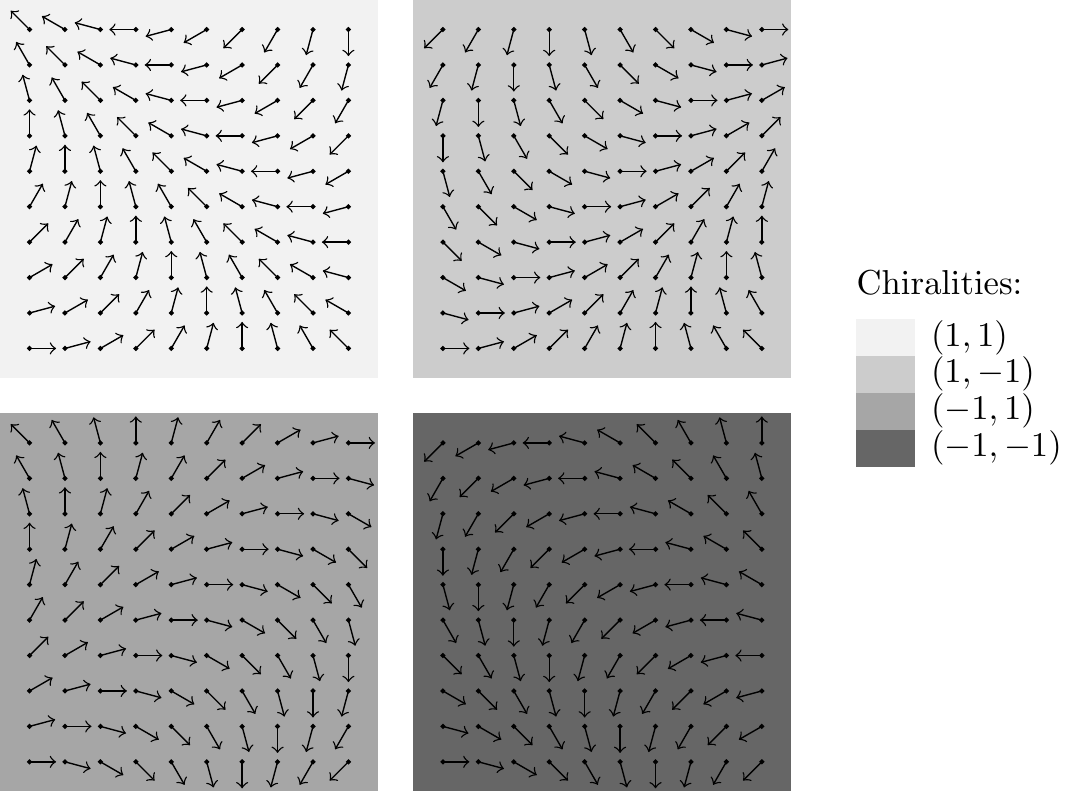}
    \caption{The ground states of the energy are classified according to the four possible values of the pair given by the horizontal and vertical chiralities.}
    \label{fig:ground states}
\end{figure}
For the reader's convenience we describe here the heuristics that leads to the limit behavior of~$H_n$. For $\ln$ small enough, we write $( u^{i+2,j} - 2 u^{i+1,j}  + u^{i,j}) / \ln^2 \simeq \de_{11} u$ to get
\begin{equation*}
    \begin{split}
         \ln^2    \sum_{(i,j)} \Big| u^{i+2,j} - \frac{\alpha_n}{2} u^{i+1,j}  + u^{i,j} \Big|^2 & = \ln^2    \sum_{(i,j)} \Big| \ln^2 \frac{u^{i+2,j} - 2 u^{i+1,j}  + u^{i,j}}{\ln^2} + 2 \dn u^{i+1,j}\Big|^2  \\
        & \simeq \int \! \ln^4 \big|\de_{11} u(x)\big|^2  + 4 \dn^2 + 4 \dn \ln^2  u(x)  \cdot \de_{11} u(x) \, \d x \\
        & \simeq \int \! \ln^4 \big|\de_{11} u(x)\big|^2  + 4 \dn^2 - 4 \dn \ln^2  \big|\de_{1} u(x)\big|^2 \, \d x \, .
    \end{split}
\end{equation*}
Taking formally $u = \big(\cos( \psi),\sin( \psi)\big)$ and using the fact that $|\de_1 u|^2 = |\de_1 \psi|^2$ and $|\de_{11}u|^2 = |\de_{11} \psi|^2 + |\de_{1} \psi|^4$, the above integral reads 
\begin{equation*}
   \int \! \ln^4 \big|\de_{11} \psi(x)\big|^2 + \big( 2 \dn - \ln^2 \big|\de_{1} \psi(x)\big|^2 \big)^2 \, \d x = \int \! \ln^4 \big|\de_{11} \psi(x)\big|^2 + 4 \dn^2\Big(1- \Big|\frac{\ln}{\sqrt{2 \dn}}  \de_{1} \psi(x)\Big|^2 \Big)^2 \, \d x  \,. 
\end{equation*}
Substituting $\varphi = \frac{\ln}{\sqrt{2 \dn}} \psi$ we obtain
\begin{equation*}
  \sqrt{2} \ln \dn^{3/2} 2 \int \!  \frac{\ln}{\sqrt{2 \dn}}  \big|\de_{11} \varphi(x)\big|^2 + \frac{\sqrt{2 \dn}}{\ln} \Big(1- \big| \de_{1} \varphi(x)\big|^2 \Big)^2 \, \d x  \,. 
\end{equation*}
Reasoning in an analogous way with $\de_{22} u$ and putting $\en := \frac{\ln}{\sqrt{2 \dn}}$, we have that  
\begin{equation} \label{eqintro:H in terms of phi}
    H_n(u;\Omega) \simeq \! \int \!  \en \big|\de_{11} \varphi(x)\big|^2 + \frac{1}{\en} \Big(1- \big| \de_{1} \varphi(x)\big|^2 \Big)^2 \! \d x {+} \int \!  \en \big|\de_{22} \varphi(x)\big|^2 + \frac{1}{\en} \Big(1- \big| \de_{2} \varphi(x)\big|^2 \Big)^2 \!  \d x\,.
\end{equation}
Noticing that $\de_1 \varphi = \frac{\ln}{\sqrt{2 \dn}} \de_1 \psi \simeq \frac{1}{\sqrt{2 \dn}} \theth \simeq \sqrt{\frac{2}{\dn}} \sin\Big( \frac{1}{2} \theth \Big)$ and $\de_2 \varphi \simeq \sqrt{\frac{2}{\dn}} \sin\Big( \frac{1}{2} \thetv \Big)$, by the very definition of $w$ and $z$ we have 
\begin{equation} \label{eqintro:w z almost gradient}
    (w,z) \simeq \nabla \varphi \, .
\end{equation}
Hence we conclude that 
\begin{equation} \label{eqintro:MM}
    H_n(u;\Omega) \simeq \int \!  \en \big|\de_{1} w(x)\big|^2 + \frac{1}{\en} \big(1-  | w(x) |^2 \big)^2   \d x +  \int \! \en \big|\de_{2} z(x)\big|^2 + \frac{1}{\en} \big(1- | z(x)|^2 \big)^2   \d x\,.
\end{equation}

This heuristics shows that the energy $H_n(u;\Omega)$ can be thought of as the sum of two functionals written in terms of the chirality variables $w$ and $z$. These functionals resemble the well-known Modica-Mortola functional, but each of them features only one partial derivative. Assuming $\en \to 0$, given a sequence $u_n$ with $H_n(u_n;\Omega) \leq C$ and with associated pairs of chiralities $(w_n,z_n)$ converging in $L^1$ to a pair~$(w,z)$, one expects that 
\begin{gather}
    w(x),z(x) \in \{1,-1\} \quad \text{for a.e.\ } x \in \Omega \, ,\label{eqintro:w and z 1}\\
    \D_1 w, \D_2 z \ \text{ are bounded measures in } \Omega \, , \label{eqintro:partial derivatives}
\end{gather}
where $\D_1 w, \D_2 z$ denote the distributional partial derivatives. (This can be seen, e.g., via a slicing argument in the vertical and horizontal directions.) However, these preliminary properties of $(w,z)$ do not characterize the admissible pairs of chiralities in the limit and need to be complemented in order to carry out an exhaustive $\Gamma$-convergence analysis. Indeed, they are derived by considering the two chiralities $w_n$ and $z_n$ as independent of each other, neglecting the fact that these are related to the same spin field $u_n$. In fact, such a constraint is already taken into account in the heuristic argument in~\eqref{eqintro:w z almost gradient} applied to $(w_n,z_n)$, which implies $\curl(w_n,z_n) \simeq 0$ and suggests for the limit $(w,z)$ that 
\begin{equation} \label{eqintro:curl 0}
    \curl(w,z) = 0 \  \text{ in the sense of distributions.}
\end{equation}
It is interesting to remark that a vector field $(w,z) \in L^1(\Omega;\RR^2)$ satisfying~\eqref{eqintro:partial derivatives} and~\eqref{eqintro:curl 0} does not belong, in general, to $BV(\Omega;\RR^2)$. 
This has been observed by Ornstein in~\cite{Orn}, where the author proves failure of the $L^1$ control of the mixed second derivatives of a function in terms of the $L^1$ norms of its pure second derivatives, see also~\cite[Theorem 3]{ConFarMag}. 
A natural question is whether the additional condition~\eqref{eqintro:w and z 1} guarantees that $(w,z) \in BV(\Omega;\RR^2)$. We do not answer this question here but, nonetheless, we prove that the limits $(w,z)$ of sequences~$(w_n,z_n)$ with equibounded energy do belong to $BV(\Omega;\RR^2)$. This is proven in Theorem~\ref{thm:main}-(i), where we show that a sequence~$(w_n,z_n)$ associated to a sequence of spin fields $u_n$ with $H_n(u_n;\Omega) \leq C$ admits a subsequence converging in $L^1$ to a limit pair $(w,z)$ that belongs to~$BV(\Omega;\RR^2)$. 

To prove the compactness result, the crucial observation is that the $L^2$ norm of the pure second derivatives of a compactly supported function does control the $L^2$ norm of its mixed derivative (in contrast to the $L^1$ case discussed above). As a consequence, from~\eqref{eqintro:H in terms of phi}--\eqref{eqintro:w z almost gradient} and taking care of the boundary conditions, we deduce that  (up to a multiplicative constant)
\begin{equation*}
    \begin{split}
        H_n(u_n;\Omega) &  \gtrsim  \int \!  \en \big|\nabla^2 \varphi_n(x)\big|^2 + \frac{1}{\en} \Big(1- \big| \de_{1} \varphi_n(x)\big|^2 \Big)^2 + \frac{1}{\en} \Big(1- \big| \de_{2} \varphi_n(x)\big|^2 \Big)^2 \!\! \d x  \\
        & \simeq   \int \!  \en \big|\nabla w_n(x)\big|^2 \!\! +  \frac{1}{\en} \big(1- | w_n(x)|^2 \big)^2 \! \d x  + \! \!\int \!  \en \big|\nabla z_n(x)\big|^2 \!\! + \frac{1}{\en} \big(1- |z_n(x)|^2 \big)^2 \! \d x  \, , 
    \end{split}
    \end{equation*}
which yields the desired compactness thanks to the well-known result for the Modica-Mortola functional in terms of the variables $w_n$ and $z_n$. Even though this argument might seem immediate from the heuristics we presented here, it does require a careful and technical analysis. One of the most delicate points is that of defining the appropriate variable that plays the role of the angular lifting $\varphi_n$, which may not exist due to possible topological defects of the spin field $u_n$.

The observations above lead us to the definition of the limit functional $H$. It is finite for a pair of chiralities $(w,z)$ belonging to $\Dom(H;\Omega)$, that is satisfying
\begin{gather*}
    w(x),z(x) \in \{1,-1\} \quad \text{for a.e.\ } x \in \Omega \, ,\\
     ( w, z ) \in BV(\Omega;\RR^2) \, ,  \\
     \curl(w,z) = 0 \  \text{ in the sense of distributions,}
\end{gather*} 
on which it takes the expression
\begin{equation*}
    H(w,z;\Omega) =  \frac{4}{3}  \Big( |\D_1 w|(\Omega) +  |\D_2 z|(\Omega) \Big) \, .
\end{equation*}
From~\eqref{eqintro:MM} one expects that 
\begin{equation} \label{eqintro:liminf}
    \liminf_{n \to \infty} H_n(u_n;\Omega) \geq H(w,z;\Omega) \, ,
\end{equation}
whenever the pairs of chiralities $(w_n,z_n)$ associated to $u_n$ converge in $L^1$ to $(w,z)$. This is the liminf inequality proven in Theorem~\ref{thm:main}-(ii) and it is obtained by a reduction to the one-dimensional setting.

\begin{figure}[H]
    \includegraphics{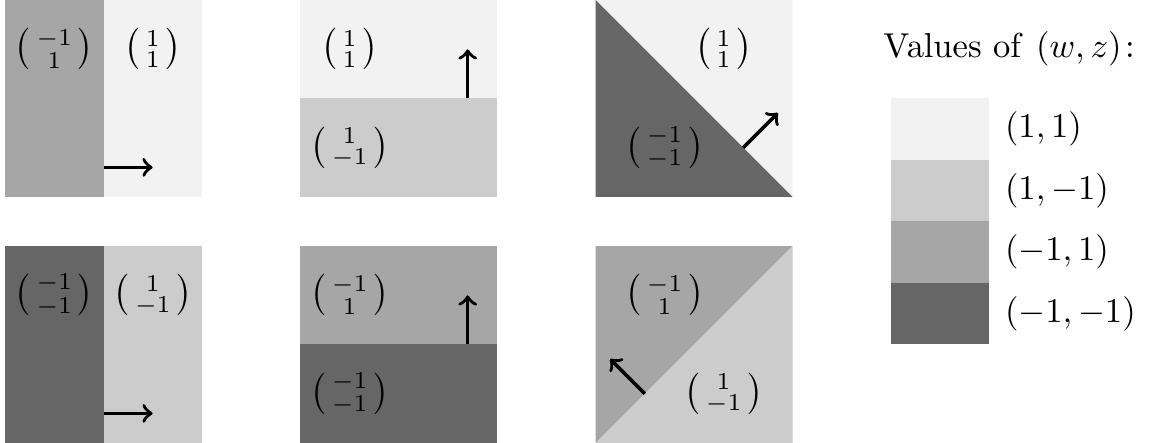}

    \caption{The six pictures above and the six analogous pictures obtained by flipping each of them with respect to the jump set  depict the only possible local configurations for the triple $((w,z)^+,(w,z)^-,\nu_{(w,z)})$ (up to the change to the triple $((w,z)^-, (w,z)^+, - \nu_{(w,z)})$). Each region where $(w,z)$ is constant is colored with a scale of grey going from the darkest to the lightest brightness according to the attained values ordered in the following way: $({-}1,{-}1)$, $({-}1,1)$, $(1,{-}1)$, $(1,1)$. These colored regions represent the ground states depicted in Figure~\ref{fig:ground states} in the limit.} 

    \label{fig:possible jumps}
\end{figure}

The construction of a recovery sequence that shows the optimality of the lower bound in~\eqref{eqintro:liminf} does not follow from the usual arguments exploited so far in the study of interfacial energies obtained as limits of Ising-type models~\cite{AliBraCic,BraGarPal,BraCic,BraKre}. In fact, it turns out to be a delicate task that can be carried out only after better understanding the rigidity induced to pairs $(w,z) \in \Dom(H;\Omega)$ by the $\curl$-free constraint. A first observation is that the properties defining $(w,z) \in \Dom(H;\Omega)$ fully characterize the  geometry of the blow-ups of $(w,z)$ at its jump points. Indeed, the condition $\curl(w,z) = 0$ implies the existence of a potential $\varphi$ such that $(w,z) = \nabla \varphi \in BV$. The normal to the jump set of a $BV$ gradient is forced to be aligned to the jump $(\nabla \varphi)^+ - (\nabla \varphi)^-$, where $(\nabla \varphi)^+$, $(\nabla \varphi)^-$ belong to $\{1,-1\}^2$. This entails a finite number of possible local geometries for $(w,z)$, see Figure~\ref{fig:possible jumps} and Subsection~\ref{subsec:rigid jump part} for a detailed discussion.

\begin{figure}[H]
    \vspace{-7em}
     \rotatebox{45}{
    \includegraphics{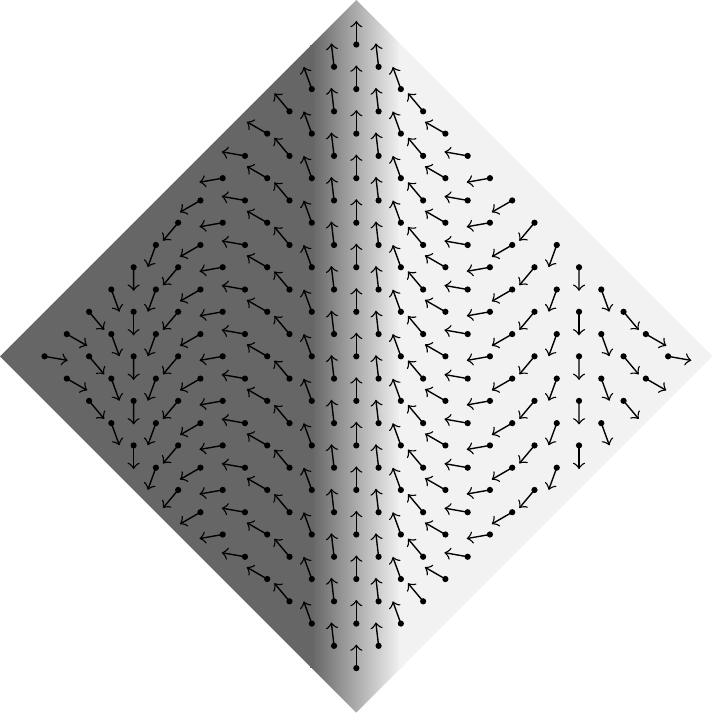}
     }
     \vspace{-6.5em}

    \caption{The picture shows a possible spin field configuration $u_n$ which has a chirality transition from $(-1,-1)$ to $(1,1)$.}

    \label{fig:transition}
\end{figure}

For these simple geometries of $(w,z)$ we can construct a recovery sequence, a particular case being shown in Figure~\ref{fig:transition}. 
However, there exist pairs in $\Dom(H;\Omega)$ whose jump sets present a less trivial geometry. 
For instance, in Figure~\ref{fig:fractal} we show how to construct an admissible pair $(w,z)$ with a jump set consisting of infinitely many segments. 
As it is customary when tackling these kinds of issues, one might look for an approximation of $(w,z) \in \Dom(H;\Omega)$, under which the functional $H$ is continuous, by means of a sequence of functions belonging to $\Dom(H;\Omega)$ having jump sets with a simpler structure, e.g., a polyhedral structure (consisting of finitely many segments).  
To the best of our knowledge, such a density result is not known and, unfortunately, techniques like the one used for Caccioppoli partitions in~\cite{BraConGar} or the polyhedral approximation for currents~\cite[Theorem~4.2.20]{Fed}  seem difficult to adapt to our framework due to the $\curl$-free constraint (see also~\cite{AliLazPal}). 
It is worth noticing that the more rigid case where $\nabla \varphi$ is a $BV$ function attaining only 3 values has been exhaustively described in~\cite{Mos}, where it has been proved that $\H^1$-a.e.\ point of the jump set has a neighborhood where the jump set is polyhedral. 
With an example similar to the one in Figure~\ref{fig:fractal}, in~\cite{Mos} the author shows that this does not hold true when $\nabla \varphi$ attains 4 values. 
Finally, every $BV$ gradient $\nabla \varphi$ attaining 2 values has a local laminar structure~\cite[Proposition~1]{BalJam}. Such a strong rigidity would allow to solve the problem of the limsup inequality by means of one-dimensional constructions. We also refer to~\cite{Con-Fon-Leo, ConSch, KitLucRul} for related problems motivated by elasticity.  


            

            


            
            



\begin{figure}[H]

    \includegraphics{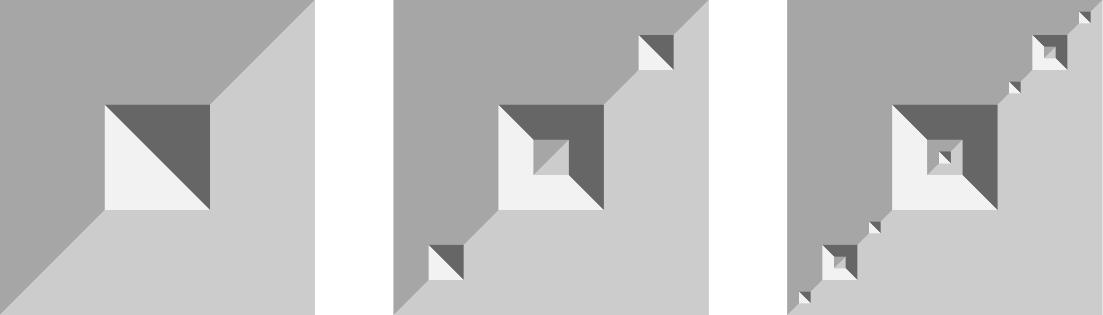}

    \caption{The first three steps of an iterative construction whose limit belongs to $\Dom(H;\Omega)$ and has a jump set consisting of infinitely many segments.}

    \label{fig:fractal}
\end{figure}

To overcome the difficulties mentioned above we resort to a more abstract technique introduced by Poliakovsky in~\cite{Pol07,Pol}, designed to provide upper bounds for general classes of functionals in singular perturbation problems and that is here for the first time applied in the context of discrete-to-continuum variational limits. In what follows we sketch the main steps of this technique and how it can be adapted to our framework. 
Given $(w,z) = \nabla \varphi$ belonging to $\Dom(H;\Omega)$, we fix a suitable convolution kernel $\eta$ and we define $\varphi^{\en}$ by mollifying $\varphi$ with~$\eta$ at the scale $\en$, the parameter of the Modica-Mortola functional in~\eqref{eqintro:H in terms of phi}. 
We exploit the discretization $\varphi_n$ of~$\varphi^{\en}$ to construct the angular lifting that defines the discrete spin field~$u_n$, whose associated chirality pairs $(w_n,z_n)$ provide a candidate for the recovery sequence. 
We relate the discrete energy $H_n(u_n;\Omega)$ to the Modica-Mortola-type functional in $\nabla \varphi^{\en}$ as in~\eqref{eqintro:H in terms of phi}. 
Thanks to~\cite{Pol}, this allows us to obtain an asymptotic upper bound which depends on the choice of $\eta$. 
The last main step in~\cite{Pol} is to further sharpen the upper bound by optimizing over the convolution kernel $\eta$. 
The sharpened upper bound can be explicitly expressed by integrating over the jump set of $(w,z)$ a surface density obtained by optimizing the transition energy over one-dimensional profiles.
In our case we can prove that this upper bound matches the lower bound in~\eqref{eqintro:liminf}, concluding the proof of the limsup inequality in Theorem~\ref{thm:main}-(iii). We stress that the technique described above requires that~$(w,z) \in BV(\Omega;\RR^2)$. For this reason it has been crucial to prove this property in the compactness result.

\section{The model and main result}

\subsection{Basic notation}

Given two vectors $a,b\in \RR^m$ we denote by $a \cdot b$ their scalar product. If $a,b\in \RR^2$, their cross product is given by $a \x b = a_1 b_2 - a_2 b_1$. As usual, the norm of $a$ is denoted by $|a| = \sqrt{a \cdot a}$. We denote by $\SS^1$ the unit circle in $\RR^2$. Given $a \in \RR^N$ and $b \in \RR^M$, their tensor product is the matrix $a \otimes b = (a_i b_j)^{i = 1,\dots,N}_{j = 1,\dots,M}   \in \RR^{N \times M}$.

In the whole paper $C$ denotes a constant that may change from line to line.   

\subsection{Discrete functions}

We  introduce here the notation used for functions defined on a square lattice in $\RR^2$. For the whole paper, $\ln$ denotes a sequence of positive lattice spacings that converges to zero. Given $i,j \in \ZZ$,  we define the half-open square $Q_{\ln}(i,j)$ with left-bottom corner in $(\ln i, \ln j)$ by $Q_{\ln}(i,j) := (\ln i, \ln j) + [0,\ln)^2$ and we denote its closure by $\ol Q_{\ln}(i,j)$. For a given set $S$, we introduce the class of functions with values in $S$ which are piecewise constant on the squares of the lattice $\ln \ZZ^2$: 
\begin{equation*}
    \PC_{\ln}(S) := \{ v \colon \RR^2 \to S  \ : \  v(x) = v(\ln i, \ln j) \text{ for } x \in Q_{\ln}(i,j)\} \, .
\end{equation*}
With a slight abuse of notation, we will always identify a function $v \in \PC_{\ln}(S)$ with the function defined on the points of the lattice $\ZZ^2$ given by $(i,j) \mapsto v^{i,j} := v(\ln i, \ln j)$ for $(i,j) \in \ZZ^2$. Conversely, given values $v^{i,j} \in S$ for $(i,j) \in \ZZ^2$, we define $v \in \PC_{\ln}(S)$ by $v(x) := v^{i,j}$ for $x \in Q_{\ln}(i,j)$.

Given a function $v \in \PC_{\ln}(\RR)$, we define the discrete partial derivatives $\de_1^{\mathrm{d}} v, \de_2^{\mathrm{d}} v \in \PC_{\ln}(\RR)$  by
\begin{equation*}
	\de_1^{\mathrm{d}}v^{i,j} := \frac{v^{i+1,j} - v^{i,j}}{\ln} \quad \text{and} \quad 	\de_2^{\mathrm{d}}v^{i,j}: = \frac{v^{i,j+1} - v^{i,j}}{\ln}  \, ,
\end{equation*}
and the discrete gradient by $\nabla^{\mathrm{d}} v := \vc{\de_1^{\mathrm{d}} v}{\de_2^{\mathrm{d}} v}$. Note that the operators $\de_1^{\mathrm{d}}$, $\de_2^{\mathrm{d}} \colon \PC_{\ln}(\RR) \to \PC_{\ln}(\RR)$ commute. Thus we may define the second order discrete derivatives $\de_{11}^{\mathrm{d}} v$, $\de_{12}^{\mathrm{d}} v = \de_{21}^{\mathrm{d}} v$, and $\de_{22}^{\mathrm{d}} v$ by iterative application of these operators in arbitrary order. Similarly, we define discrete partial derivatives of order higher than 2. 

  \subsection{Assumptions on the model}

Our model is an energy on discrete spin fields $u$ which are defined on square lattices inside a given domain $\Omega \subset \RR^2$. Our main result is valid whenever $\Omega$ is an open and bounded subset of $\RR^2$ that is simply connected and has a certain boundary regularity. Referring to Subsection~\ref{subsec:BV gradients} for the definition of a $BVG$ domain, we introduce the class of admissible domains by
\begin{equation*}
    \A_0 := \{ \Omega \subset \RR^2  \ : \  \Omega \text{ is an open, bounded, simply connected, $BVG$ domain} \} \, .
\end{equation*}
We recall that simply connected sets are by definition connected. 

To define the energies in our model, we introduce the set of indices  
\begin{equation*}
    \begin{split}
        \I^n(\Omega)  := \{(i,j) \in \ZZ \x \ZZ \ : \ & \ol Q_{\ln}(i,j), \ \ol Q_{\ln}(i+1,j), \ \ol Q_{\ln}(i,j+1) \subset \Omega \} 
    \end{split}
\end{equation*}
 for $\Omega \in \A_0$.  

Let $\dn$   be a sequence of positive real numbers that converges to zero and let  $\alpha_n = 4(1-\dn)$.  In the following we assume that $\frac{\ln}{\sqrt{\dn}} \to 0$ as $n \to \infty$.   We consider the functionals $\Hh_n, \Hv_n \colon L^\infty(\RR^2;\SS^1) \x \A_0 \to [0,+\infty]$ defined by 
\begin{equation*}
    \begin{aligned}
        \Hh_n(u;\Omega) & :=\frac{1}{\sqrt{2} \ln \dn^{3/2} }  \frac{1}{2}  \ \ln^2 \hspace{-1em}  \sum_{(i,j) \in \I^n(\Omega)} \Big| u^{i+2,j} - \frac{\alpha_n}{2} u^{i+1,j}  + u^{i,j} \Big|^2  , \\
        \Hv_n(u;\Omega) & :=  \frac{1}{\sqrt{2} \ln \dn^{3/2} } \frac{1}{2} \ \ln^2 \hspace{-1em}  \sum_{(i,j) \in \I^n(\Omega)} \Big| u^{i,j+2} - \frac{\alpha_n}{2} u^{i,j+1}  + u^{i,j} \Big|^2 \, ,
    \end{aligned}
\end{equation*}
for $u \in \PC_{\lambda_n}(\SS^1)$ and extended to $+\infty$ elsewhere. Then we define $H_n \colon L^\infty(\RR^2;\SS^1) \x \A_0 \to [0,+\infty]$ by
\begin{equation*}
    H_n := \Hh_n + \Hv_n \, .
\end{equation*}

 We next introduce the chirality order parameter $(w,z)$ associated to a spin field $u$. To every $u \in \PC_{\ln}(\SS^1)$ we associate the horizontal oriented angle between two adjacent spins~by 
\begin{equation} \label{eq:def of theth}
    (\theth)^{i,j} := \mathrm{sign}( u^{i,j} \x u^{i+1,j} ) \arccos( u^{i,j} \cdot u^{i+1,j} ) \, ,
\end{equation}
where we used the convention $\mathrm{sign}(0) = -1$. Analogously, we define 
\begin{equation} \label{eq:def of thetv}
    (\thetv)^{i,j} := \mathrm{sign}( u^{i,j} \x u^{i,j+1} ) \arccos( u^{i,j} \cdot u^{i,j+1} ) \, ,
\end{equation}
We additionally adopt the notation $\theth_n$ and $\thetv_n$ for the angles associated to $u_n$.  Note that $\theth, \thetv \in [-\pi,\pi)$. To every spin field $u \in \PC_{\ln}(\SS^1)$ we associate the order parameter $(w,z) \in \PC_{\ln}(\RR^2)$ defined by 
\begin{equation} \label{eq:order parameter}
    \begin{aligned}
        w^{i,j} & := \sqrt{\frac{2}{\delta_n}} \sin \Big( \frac{1}{2} (\theth)^{i,j} \Big) \, , \\
        z^{i,j} & := \sqrt{\frac{2}{\delta_n}} \sin \Big( \frac{1}{2} (\thetv)^{i,j} \Big) \,.
    \end{aligned}
\end{equation}
It is convenient to introduce the transformation $\T_n \colon \PC_{\ln}(\SS^1) \to \PC_{\ln}(\RR^2)$ given by 
\begin{equation*}
    \T_n(u) := (w,z) \, ,
\end{equation*}
where $(w,z)$ is given by~\eqref{eq:order parameter}. Note that if $u$ and $u'$ are such that $\T_n(u) = \T_n(u')$, then there exists a rotation $R \in  SO (2)$ such that $u' = R u$. 

With a slight abuse of notation we define the functional $H_n \colon \Lloc(\RR^2;\RR^2) \x \A_0 \to [0,+\infty]$
\begin{equation*}
    H_n(w,z; \Omega) := \begin{cases}
        H_n(u; \Omega) & \text{ if } \T_n(u) = (w,z) \text{ for some } u \in \PC_{\ln}(\SS^1) \, , \\
        + \infty & \text{ if } (w,z) \notin \T_n(\PC_{\ln}(\SS^1)) \, .
    \end{cases}
\end{equation*}
Notice that $H_n(u; \Omega)$ does not depend on the particular choice of $u$ since $H_n(u; \Omega)$ is rotation invariant. Analogously, we define $\Hh_n(w,z;\Omega)$ and $\Hv_n(w,z;\Omega)$.

To state the main result of the paper, we introduce the functional $H \colon \Lloc(\RR^2;\RR^2) \x \A_0 \to [0,+\infty]$ defined by
\begin{equation} \label{eq:def of H}
    H(w,z;\Omega)  := \begin{cases} \displaystyle
     \frac{4}{3}  \Big( |\D_1 w|(\Omega) +  |\D_2 z|(\Omega) \Big) & \text{ if } (w,z) \in \Dom(H;\Omega) \, , \\
    + \infty & \text{ if } (w,z) \in \Lloc(\RR^2;\RR^2) \sm \Dom(H;\Omega) \, ,
\end{cases}
\end{equation} 
where
\begin{equation}
    \begin{split}
        \Dom(H;\Omega) := \{ (w,z) \in \Lloc(\RR^2;\RR^2) \ : \ & (w,z) \in BV(\Omega;\RR^2) \, , \\
        & w \in \{1,-1\} \, \text{ and } z \in \{1,-1\} \text{ a.e.\ in } \Omega \, , \\
        & \curl(w,z) = 0 \text{ in } \mathcal{D}'(\Omega) \} \, . 
    \end{split}
\end{equation}
For the notions of $BV$ functions and of distributional $\curl$ we refer to Section~\ref{sec:BV preliminaries} below. 

\subsection{The main result}

The following theorem is the main result of the paper. 

\begin{theorem} \label{thm:main}
    Assume that $\Omega \in \A_0$. Then the following results hold true:
    \begin{itemize}
        \item[i)] (Compactness) Let $(w_n,z_n) \in \Lloc(\RR^2;\RR^2)$ be a sequence satisfying 
        \begin{equation*}
            H_n(w_n,z_n;\Omega) \leq C \, .
        \end{equation*}  
        Then there exists $(w,z) \in \Dom(H;\Omega)$ such that, up to a subsequence, $(w_n,z_n) \to (w,z)$  in $\Lloc(\Omega;\RR^2)$. 
        \item[ii)] (liminf inequality) Let $(w_n,z_n), (w,z) \in \Lloc(\RR^2;\RR^2)$. Assume that $(w_n,z_n) \to (w,z)$ in $\Lloc(\Omega;\RR^2)$. Then
        \begin{equation*}
            H(w,z;\Omega) \leq \liminf_{n \to \infty} H_n(w_n,z_n;\Omega) \, . 
        \end{equation*} 
        \item[iii)] (limsup inequality) Assume that  $(w,z) \in \Lloc(\RR^2;\RR^2)$. Then there exists a sequence $(w_n,z_n) \in \Lloc(\RR^2;\RR^2)$ such that $(w_n,z_n) \to (w,z)$ in $L^1(\Omega;\RR^2)$ and 
        \begin{equation*}
            \limsup_{n \to \infty} H_n(w_n,z_n;\Omega)  \leq  H(w,z;\Omega)  \, .
        \end{equation*} 
    \end{itemize}
\end{theorem}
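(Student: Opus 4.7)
The plan is to reduce to the standard Modica--Mortola compactness theorem, applied separately to $w_n$ and to $z_n$. Given $u_n\in\PC_{\lambda_n}(\SS^1)$ with $H_n(u_n;\Omega)\le C$, I would first construct on a simply connected subdomain an angular lifting $\psi_n$ of $u_n$ and introduce the rescaled potential $\varphi_n:=\tfrac{\lambda_n}{\sqrt{2\delta_n}}\psi_n$. The key algebraic identity is that, after rewriting the three-point stencil
\begin{equation*}
u^{i+2,j}-\tfrac{\alpha_n}{2}u^{i+1,j}+u^{i,j}=(u^{i+2,j}-2u^{i+1,j}+u^{i,j})+2\delta_n u^{i+1,j}
\end{equation*}
and using $|u_n|=1$ to convert $u\cdot\partial_{11}^{\mathrm d}u$ into $-|\partial_1^{\mathrm d}u|^2$, one can complete the square as in the heuristics of \eqref{eqintro:H in terms of phi}, getting
\begin{equation*}
H_n^{\mathrm{hor}}(u_n;\Omega)\gtrsim\int_{\Omega}\varepsilon_n\bigl(\partial_{11}^{\mathrm d}\varphi_n\bigr)^2+\tfrac{1}{\varepsilon_n}\bigl(1-(\partial_1^{\mathrm d}\varphi_n)^2\bigr)^2\d{x},
\end{equation*}
and symmetrically for $H_n^{\mathrm{ver}}$. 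The ingredient beyond the 1D case is the $L^2$ mixed-derivative bound $\|\partial_{12}\varphi\|_{L^2}\le C(\|\partial_{11}\varphi\|_{L^2}+\|\partial_{22}\varphi\|_{L^2})$ for compactly supported $\varphi$, applied to a cutoff of $\varphi_n$: it upgrades the above one-direction bound to a full Modica--Mortola bound for each of the variables $w_n\simeq\partial_1^{\mathrm d}\varphi_n$ and $z_n\simeq\partial_2^{\mathrm d}\varphi_n$, to which standard compactness applies. The $\curl$-free constraint on the limit is inherited from the fact that $(w_n,z_n)$ is a discrete gradient up to vanishing error. The subtlety to overcome is the potential nonexistence of $\psi_n$ due to topological defects of $u_n$; the resolution is to use the energy bound and the assumption $\lambda_n/\sqrt{\delta_n}\to 0$ to show that neighbouring spins are nearly aligned at every cell, excluding defects, and then to glue local liftings using simple connectedness of $\Omega$.

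\textbf{liminf (ii).} I would argue by slicing. Viewing $H_n^{\mathrm{hor}}(u_n;\Omega)$ as a discrete Fubini average along horizontal rows of the one-dimensional energy $H_n^{1\mathrm d}$ of \cite{CicSol} applied to $i\mapsto u_n^{i,j}$ (up to a boundary error controlled by the $BVG$ regularity of $\Omega$), and noting that the 1D chirality parameter of the row coincides with $i\mapsto w_n^{i,j}$, I would apply the 1D $\Gamma$-$\liminf$ inequality of \cite{CicSol} on a.e.\ horizontal slice of the limit $w$ and conclude by Fatou that $\tfrac{4}{3}|D_1 w|(\Omega)\le\liminf_n H_n^{\mathrm{hor}}(u_n;\Omega)$. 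The symmetric argument on vertical slices gives $\tfrac{4}{3}|D_2 z|(\Omega)\le\liminf_n H_n^{\mathrm{ver}}(u_n;\Omega)$; summing the two yields the inequality. When the right-hand side is finite, part~(i) guarantees $(w,z)\in\Dom(H;\Omega)$; otherwise the statement is trivial.

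\textbf{limsup (iii), the main obstacle.} Assume $(w,z)\in\Dom(H;\Omega)$. Simple connectedness of $\Omega$ together with $\curl(w,z)=0$ give $\varphi\in W^{1,\infty}(\Omega)$ with $\nabla\varphi=(w,z)\in BV$. The core difficulty, already signaled in the Introduction, is the absence of a polyhedral density result for $\Dom(H;\Omega)$: as illustrated by Figure~\ref{fig:fractal}, admissible pairs need not be approximable by pairs with finitely many flat jump segments, so one cannot build the recovery sequence cell-by-cell between simple jump geometries. I would therefore follow Poliakovsky's method \cite{Pol07,Pol}. First, fix a suitable convolution kernel $\eta$, mollify $\varphi$ at the transition scale $\varepsilon_n=\lambda_n/\sqrt{2\delta_n}$ to obtain $\varphi^{\varepsilon_n}=\varphi*\eta_{\varepsilon_n}$, sample on the lattice to get $\varphi_n\in\PC_{\lambda_n}(\RR)$, and define the recovery spin field
\begin{equation*}
u_n^{i,j}:=\Bigl(\cos\bigl(\tfrac{\sqrt{2\delta_n}}{\lambda_n}\varphi_n^{i,j}\bigr),\sin\bigl(\tfrac{\sqrt{2\delta_n}}{\lambda_n}\varphi_n^{i,j}\bigr)\Bigr),
\end{equation*}
so that $(w_n,z_n):=\T_n(u_n)\to(w,z)$ in $L^1$ by mollification estimates. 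Next, run the Taylor expansion of part~(i) in reverse, exploiting the smoothness of $\varphi^{\varepsilon_n}$, to identify $H_n(u_n;\Omega)$ with the continuous Modica--Mortola-type functional of \eqref{eqintro:H in terms of phi} up to $o(1)$. Finally, invoke Poliakovsky's abstract upper-bound theorem and optimize over $\eta$, obtaining a surface density integrated on $J_{(w,z)}$. The hard step is to carry out this optimization explicitly and to show that the resulting density is exactly $\tfrac{4}{3}\bigl(|\nu\cdot e_1|\,|w^+-w^-|+|\nu\cdot e_2|\,|z^+-z^-|\bigr)$ on $J_{(w,z)}$, so that its integral against $\H^1\mres J_{(w,z)}$ collapses to $\tfrac{4}{3}\bigl(|D_1 w|(\Omega)+|D_2 z|(\Omega)\bigr)$. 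The rigid classification of local jump geometries of $\nabla\varphi\in BV(\Omega;\{\pm 1\}^2)$ displayed in Figure~\ref{fig:possible jumps} reduces this optimization to finitely many 1D transition problems, each one solved by the 1D analysis of \cite{CicSol} with the same constant $\tfrac{4}{3}$. The necessity of $(w,z)\in BV$ throughout this construction is precisely why the compactness statement was formulated in $BV$ in the first place.
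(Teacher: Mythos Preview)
Your outlines for parts~(ii) and~(iii) match the paper's arguments closely: slicing plus the 1D result of \cite{CicSol} for the liminf, and Poliakovsky's mollification technique followed by the finite case-check of Proposition~\ref{prop:optimization in eta} for the limsup. (Two small discrepancies: the paper uses $x$-dependent kernels $\eta(z,x)$ rather than a fixed $\eta$, which is what the optimization over $\mathcal V(\Omega)$ in \cite{Pol} actually requires; and the angular scale in \eqref{recovery-spin} is $\arccos(1-\delta_n)/\lambda_n$ rather than $\sqrt{2\delta_n}/\lambda_n$, though these are asymptotically equivalent.)

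There is, however, a genuine gap in your compactness strategy. You write that the energy bound together with $\lambda_n/\sqrt{\delta_n}\to 0$ forces neighbouring spins to be nearly aligned \emph{at every cell}, thereby excluding topological defects and permitting a global lifting $\psi_n$. This is false: the bound $H_n(u_n;\Omega)\le C$ only controls a \emph{sum} and allows $O(\delta_n^{3/2}/\lambda_n)$ lattice cells where $|(\theta^{\mathrm{hor}}_n)^{i,j}|$ or $|(\theta^{\mathrm{ver}}_n)^{i,j}|$ is of order one (see the counting argument leading to~\eqref{eq:counting theta} and~\eqref{eq:number of bad for V}). For admissible regimes such as $\lambda_n=\delta_n^2$ this count diverges, so vortices cannot be ruled out and a global angular lifting need not exist.

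The paper circumvents this not by excluding defects but by \emph{living with them}. One defines a path-dependent potential $\varphi_n$ by summing angles first horizontally along a fixed base row and then vertically (formula~\eqref{eq:def of phi}); this always makes sense and gives $\partial_2^{\mathrm d}\varphi_n$ exactly proportional to $\theta^{\mathrm{ver}}_n$, but $\partial_1^{\mathrm d}\varphi_n$ differs from the corresponding horizontal quantity $\tilde w_n$ precisely by the accumulated discrete vorticity~$V_n$ along columns (equation~\eqref{eq:gap is due to vorticity}). The vortex count $O(\delta_n^{3/2}/\lambda_n)$ is then fed into a careful discrete integration-by-parts (the terms $I_1,\dots,I_5$ and the remainder $R_n$ in the proof of Proposition~\ref{prop:full MM bound}, together with the estimates of Lemma~\ref{lemma:all estimates}) to show that the resulting errors are harmless for the mixed-derivative bound. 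This quantitative bookkeeping is the technical heart of the compactness proof and is not captured by your proposed resolution.
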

\begin{remark}
    The proof of the compactness and of the  liminf  inequality actually work without requiring the simple connectedness of $\Omega$ and the regularity of its boundary.  
\end{remark}

\section{Preliminary results on $BV$ gradients} \label{sec:BV preliminaries}

Given an open set $\Omega \subset \RR^d$, we denote by $\M_b(\Omega;\RR^\ell)$ the space of all $\RR^\ell$-valued Radon measures on $\Omega$ with finite total variation. The total variation measure of $\mu \in \M_b(\Omega;\RR^\ell)$ is denoted by $| \mu |$. Moreover we denote by $\mathcal{D}'(\Omega)$ the space of distributions on $\Omega$ and by~$\langle T, \xi \rangle$ the duality between $T \in \mathcal{D}'(\Omega)$ and $\xi \in C^\infty_c(\Omega)$. 

\subsection{$BV$ functions} We start by recalling some basic facts about $BV$ functions, referring to the book~\cite{AmbFusPal} for a comprehensive treatment on the subject. 

Let $\Omega\subset\RR^d$ be an open set. A function $v \in L^1(\Omega;\RR^m)$ is a function of bounded variation, if its distributional derivative $\D v$ is a finite matrix-valued Radon measure, i.e., $\D v \in \M_b(\Omega;\RR^{m \x d})$. A function $v$ belongs to $BV_{\mathrm{loc}}(\Omega;\RR^m)$ if $v \in BV(\Omega';\RR^m)$ for every   open set   $\Omega' \subcc \Omega$. 

The distributional derivative $\D v \in \M_b(\Omega;\RR^{m \x d})$ of a function $v \in BV(\Omega;\RR^m)$ can be decomposed in the sum of three mutually singular matrix-valued measures 
\begin{equation} \label{eq:decomposition of BV}
    \D v = \D^a v + \D^c v + \D^j v = \nabla v \L^d + \D^c v + [v] \otimes \nu_v \H^{d-1} \mres J_v \, ,
\end{equation}
where $\L^d$ is the Lebesgue measure and $\H^{d-1}$ is the $(d-1)$-dimensional Hausdorff measure; $\nabla v \in L^1(\Omega;\RR^{m \x d})$ is the approximate gradient of $v$; $J_v$ denotes the $\H^{d-1}$-countably rectifiable jump set of $v$ oriented by the normal $\nu_v$, $[v] = (v^+ - v^-)$, and $v^+$ and $v^-$ denote the two traces of $v$ on $J_v$ in the sense that 
\begin{equation*}
    \lim_{r \to 0} \frac{1}{r^d} \hspace{-1em} \integral{B^{+}_r(x,\nu_v(x))}{  \hspace{-1em} |v(y) - v^+(x)|}{\d y} = 0 \, , \quad \lim_{r \to 0} \frac{1}{r^d}   \hspace{-1em} \integral{B^{-}_r(x,\nu_v(x))}{  \hspace{-1em} |v(y) - v^-(x)|}{\d y} = 0 \, ,
\end{equation*}
for $x \in J_v$, with $B^{\pm}_r(x,\nu) = \{ y \in B_r(x)\ : \ \pm (y-x) \cdot \nu > 0 \}$; $\D^c v$ is the so-called Cantor part of the derivative satisfying $\D^c v(B) = 0$ for every Borel set $B$ with $\H^{n-1}(B) < \infty$. We recall that the triple $(v^+,v^-,\nu_v)$ is determined uniquely up to the change to $(v^-,v^+,-\nu_v)$ and can be chosen as a Borel function on the Borel set $J_v$. 

\subsection{$BV$ gradients} \label{subsec:BV gradients} We recall here how the $\curl$-free condition on a $BV$ vector field enforces a rigid geometry on the jump part (actually on the full singular part) of its distributional derivative. Moreover, we recall the definition of $BVG$ functions introduced in~\cite{Pol07}.

Given a smooth vector field $v \colon \Omega \to \RR^d$ with components $(v^h)_{h=1,\dots, d}$, its $\curl$ is defined by $\curl(v) := (\de_h v^k - \de_k v^h)_{h,k=1,\dots,d}$. As usual, the operator $\curl$ is extended to vector-valued distributions $T \in \mathcal{D}'(\Omega;\RR^d)$ through the distribution $\curl(T) \in \mathcal{D}'(\Omega;\RR^{d \x d})$ defined in components by  
\begin{equation*}
    \langle (\curl(T))_{h,k}, \xi \rangle := - \langle T^k , \de_h\xi  \rangle + \langle T^h , \de_k  \xi  \rangle \, , \quad \text{for every } \xi \in C^\infty_c(\Omega) \, .
\end{equation*}

If $v \in BV(\Omega;\RR^d)$ and $\curl(v) = 0$, from the decomposition~\eqref{eq:decomposition of BV} we get 
\begin{equation*}
        0 = \D_h v^k - \D_k v^h  = \big( (\nabla v)_{kh} - (\nabla v)_{hk} \big) \L^d + \D_h^c v^k - \D_k^c v^h  +   \big( [v^k] \nu^h_v - [v^h] \nu^k_v \big) \H^{d-1} \mres J_v  \, , 
\end{equation*}
which yields, in particular, that $[v](x) \otimes \nu_v(x)$ is a symmetric rank-one matrix for $\H^{d-1}$-a.e.\ $x \in J_v$. This happens if and only if there exists  a function $\chi\in L^1_{\H^{d-1}}(J_v)$ such that $[v](x) = \chi(x) \nu_v(x)$ for $\H^{d-1}$-a.e.\ $x \in J_v$, i.e., 
\begin{equation} \label{eq:jump of curl free}
   \D^j v = [v] \otimes \nu_v \H^{d-1} \mres J_v = \chi \, \nu_v \otimes \nu_v \H^{d-1} \mres J_v \, .
\end{equation}

The structure of the jump set obtained in~\eqref{eq:jump of curl free} applies, e.g., to $BV$ gradients. Given a function $\varphi \in W^{1,1}(\Omega)$ such that $\nabla \varphi \in BV(\Omega;\RR^d)$, then there exists a function $\chi\in L^1_{\H^{d-1}}(J_{\nabla \varphi})$ such that 
\begin{equation*}
    \D^j \nabla \varphi = [\nabla \varphi] \otimes \nu_{\nabla \varphi} \H^{d-1} \mres J_{\nabla \varphi} = \chi \, \nu_{\nabla \varphi} \otimes  \nu_{\nabla \varphi} \H^{d-1} \mres J_{\nabla \varphi}  \, .
\end{equation*}
Functions $\varphi \in W^{1,1}(\Omega)$ such that $\nabla \varphi \in BV(\Omega;\RR^d)$ are usually called functions of bounded Hessian, see, e.g., \cite{Fon-Leo-Par}. In this paper we are interested in a different class: that of $BVG$ functions, introduced in~\cite{Pol07} and given by
\begin{equation*}
    BVG(\Omega) := \{ \varphi \in W^{1, \infty}(\Omega)  \ : \ \nabla \varphi \in BV(\Omega;\RR^d) \} \, .
\end{equation*}
Note that if $\Omega$ is a bounded open set, then clearly $BVG(\Omega)$ is contained in the class of functions of bounded Hessian.

In~\cite{Pol07}, the author proves a convenient extension result for functions in $BVG(\Omega)$ under suitable conditions on the regularity of the set $\Omega$. A bounded, open set $\Omega \subset \RR^d$ is called a $BVG$ domain if $\Omega$ can be described locally at its boundary as the epigraph of a $BVG$ function $\RR^{d-1} \to \RR$ with respect to a suitable choice of the axes, i.e., if every $x \in \de \Omega$ has a neighborhood $U_x \subset \RR^d$ such that there exists a function $\psi_x \in BVG(\RR^{d-1})$ and a rigid motion $R_x \colon \RR^d \to \RR^d$ satisfying
\begin{equation*}
R_x ( \Omega \cap U_x ) = \{ y = (y_1,y') \in \RR \x \RR^{d-1} \ : \ y_1 > \psi_x(y') \} \cap R_x (U_x) \, .
\end{equation*}
Note that every $BVG$ domain $\Omega \subset \RR^2$ is in particular a Lipschitz domain, but the converse is not true (an example is given by the epigraph of the primitive of a function which is continuous but not $BV$, e.g., the primitive of the Weierstrass function).  Note that smooth domains and polygons are $BVG$ domains.  

Every $BVG$ domain is an extension domain for $BVG$ functions in the following sense. 

\begin{proposition}[Proposition~4.1 in~\cite{Pol07}] \label{prop:extension of BVG}
Let $\Omega$ be a $BVG$ domain. Then for every $\varphi \in BVG(\Omega)$ there exists $\ol \varphi \in BVG(\RR^d)$ such that $\ol \varphi = \varphi$ in $\Omega$ and $|\D \nabla \ol \varphi|(\de \Omega) = 0$.
\end{proposition}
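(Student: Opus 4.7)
The plan is to construct $\ol\varphi$ by a standard localize-straighten-reflect procedure, using a higher-order reflection that matches both the function value and the full gradient at the boundary (in contrast to a plain even or odd reflection). Concretely, since $\de\Omega$ is compact, I would cover it by finitely many neighborhoods $U_{x_1},\dots,U_{x_N}$ of the type provided by the $BVG$ domain condition, add an interior set $U_0 \subcc \Omega$ so that $\{U_0,U_{x_1},\dots,U_{x_N}\}$ covers $\ol \Omega$, and fix a smooth partition of unity $\{\zeta_0,\zeta_1,\dots,\zeta_N\}$ subordinate to this cover. The extension will be glued together from local extensions $\ol\varphi_k$ constructed on each $U_{x_k}$.

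For each $k\geq 1$, I would use the rigid motion $R_{x_k}$ and write $\Omega\cap U_{x_k}$ as an epigraph $\{y_1>\psi(y')\}\cap R_{x_k}(U_{x_k})$ with $\psi:=\psi_{x_k}\in BVG(\RR^{d-1})$. Straighten the boundary via $\Phi(y_1,y')=(y_1-\psi(y'),y')$, which is bi-Lipschitz with Jacobian $1$, and set $\tilde\varphi:=\varphi\circ R_{x_k}^{-1}\circ\Phi^{-1}$ on the half-space $\{y_1>0\}$. The chain rule gives $\nabla\tilde\varphi=(\de_1\varphi,\nabla'\varphi+\de_1\varphi\,\nabla'\psi)$ (evaluated at $\Phi^{-1}$), and since $\nabla\varphi\in BV\cap L^\infty$ and $\nabla\psi\in BV\cap L^\infty$, the BV product rule for bounded BV factors shows $\tilde\varphi\in BVG(\{y_1>0\}\cap V)$ on a suitable half-neighborhood $V$.

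On the straightened half-space I define the extension by higher-order reflection
\begin{equation*}
    \ol{\tilde\varphi}(y_1,y') := \begin{cases} \tilde\varphi(y_1,y') & y_1\geq 0,\\ -3\,\tilde\varphi(-y_1,y')+4\,\tilde\varphi(-y_1/2,y') & y_1<0.\end{cases}
\end{equation*}
A direct computation shows that both traces of $\ol{\tilde\varphi}$ and of its full gradient $\nabla\ol{\tilde\varphi}$ from the two sides of $\{y_1=0\}$ coincide (the coefficients $-3,4$ are chosen exactly so that value and normal derivative are matched, while the tangential derivatives automatically agree). Consequently the jump part of $\D\nabla\ol{\tilde\varphi}$ carries no mass on $\{y_1=0\}$, and the Cantor and absolutely continuous parts cannot charge an $\H^{d-1}$-null hyperplane either. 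Hence $|\D\nabla\ol{\tilde\varphi}|(\{y_1=0\})=0$. Pulling back through $\Phi\circ R_{x_k}$, which is bi-Lipschitz and sends $\{y_1=0\}$ to $\de\Omega\cap U_{x_k}$, yields a local extension $\ol\varphi_k\in BVG(U_{x_k})$ with $|\D\nabla\ol\varphi_k|(\de\Omega\cap U_{x_k})=0$.

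Finally I would glue via $\ol\varphi:=\zeta_0\varphi+\sum_{k=1}^N\zeta_k\ol\varphi_k$ (extended by an arbitrary $BVG$ function far from $\Omega$ and cut off outside a large ball). The Leibniz rule in $BV$ gives $\D\nabla\ol\varphi$ as a sum of terms of the form $\zeta_k\D\nabla\ol\varphi_k$ plus absolutely continuous pieces built from $\nabla\zeta_k$, $\nabla\ol\varphi_k$, $\ol\varphi_k$, so that $|\D\nabla\ol\varphi|(\de\Omega)\leq\sum_k\|\zeta_k\|_\infty|\D\nabla\ol\varphi_k|(\de\Omega\cap U_{x_k})=0$. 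The main obstacle is the chain-rule/product-rule bookkeeping through the non-smooth straightening $\Phi$, since $\nabla\psi$ is only $BV$ rather than Sobolev; this is where the fact that all the relevant factors lie in $BV\cap L^\infty$ is crucial, and it is precisely what forces the hypothesis to be stated in terms of $BVG$ (Lipschitz) rather than merely $BV$ functions of Hessian type.
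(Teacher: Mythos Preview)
The paper does not prove this proposition; it is quoted as Proposition~4.1 from~\cite{Pol07} and used as a black-box input for the limsup construction. There is therefore no proof in the present paper to compare your attempt against.

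That said, your localize--straighten--reflect outline with the higher-order reflection $-3\,\tilde\varphi(-y_1,y')+4\,\tilde\varphi(-y_1/2,y')$ is the standard device for second-order extension theorems and is essentially the route taken in~\cite{Pol07}. One small slip: you justify the vanishing of the Cantor and absolutely continuous parts on $\{y_1=0\}$ by calling it an ``$\H^{d-1}$-null hyperplane'', which it is not; the correct reasons are that in the local chart $\{y_1=0\}$ has \emph{finite} $\H^{d-1}$ measure (so $\D^c$ does not charge it) and is $\L^d$-null (so $\D^a$ does not charge it). The step you correctly flag as the crux---pulling the conclusion $|\D\nabla\ol{\tilde\varphi}|(\{y_1=0\})=0$ back through the non-$C^1$ straightening $\Phi$---does require care: one must check that in the $BV$ product/chain rule for $\nabla(\ol{\tilde\varphi}\circ\Phi)$ neither factor concentrates mass on $\Phi^{-1}(\{y_1=0\})$, and here it matters that the entries of $\nabla\Phi$ depend only on $y'$ (so their distributional derivatives are transversal to $\{y_1=0\}$). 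With these points made precise your sketch is sound.
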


\subsection{$BV$ gradients with 4 values in dimension 2} \label{subsec:rigid jump part} Let $\Omega \subset \RR^2$ be an open set. Let us fix $(w,z) \in BV(\Omega; \RR^2)$ satisfying $\curl(w,z) = 0$ in $\mathcal{D}'(\Omega)$ and additionally
\begin{equation*}
    w(x) \in \{1,-1\} \, , \quad z(x) \in \{1,-1\} \, , \quad \text{for a.e.\ $x \in \Omega$} \, ,
\end{equation*}
i.e., $(w,z) \in \Dom(H;\Omega)$. This constraint on the values attained by $(w,z)$ enforces additional geometric structure on $\D (w,z) = \D^j (w,z)$. Indeed, by Subsection~\ref{subsec:BV gradients} we have that the jump $[(w,z)]$ and the normal $\nu_{(w,z)}$ are parallel vectors. Evaluating the possible values of the jump $[(w,z)]$ and taking into account that $|\nu_{(w,z)}|=1$, we conclude that for $\H^1$-a.e.\ $x \in J_{(w,z)}$ the triple $((w,z)^+(x), (w,z)^-(x),\nu_{(w,z)}(x))$ can be one of the following, up to the change to the triple $((w,z)^-(x), (w,z)^+(x), - \nu_{(w,z)}(x))$, 
\begin{equation} \label{eq:possible values of normal}
    \begin{split}
        & \Big(  \vc{1}{1}, \vc{-1}{1} , \vc{\pm 1}{0}  \Big) \, , \Big(\vc{1}{-1},  \vc{-1}{-1} , \vc{\pm 1}{0}  \Big)  \, , \Big( \vc{1}{1}, \vc{1}{-1} , \vc{0}{\pm 1}  \Big)  \, , \Big( \vc{-1}{1}, \vc{-1}{-1} ,  \vc{0}{\pm 1}  \Big)  \, , \\
        & \Big( \vc{1}{1}, \vc{-1}{-1} ,  \vc{\pm 1/\sqrt{2}}{\pm 1/\sqrt{2}}  \Big) \, , \Big(\vc{-1}{1} , \vc{1}{-1},   \vc{\mp 1/\sqrt{2}}{\pm 1/\sqrt{2}}  \Big) \, .
    \end{split}
\end{equation}
In particular, if $w$ jumps on a line that is not vertical (if $z$ jumps on a set that is not horizontal, resp.) then also $z$ must jump (also $w$ must jump, resp.) and the normal to the jump set is forced to be on one of the two directions $\vc{ 1/\sqrt{2}}{ 1/\sqrt{2}}$, $\vc{ - 1/\sqrt{2}}{ 1/\sqrt{2}}$.

This observation leads to the following result, which states that for this kind of functions a control on the sole partial derivatives $\D_1 w$, $\D_2 z$ is enough to bound the total variation of the full gradient.
\begin{proposition} \label{prop:bootstrap}
    Let $\Omega \subset \RR^2$ be an open set. Assume that $(w,z) \in BV_{\mathrm{loc}}(\Omega;\RR^2)$, $w(x) \in \{1,-1\}$ and $z(x) \in \{1,-1\}$ for a.e.\ $x \in \Omega$, and $\curl(w,z) = 0$ in $\mathcal{D}'(\Omega)$. Assume that 
    \begin{equation*}
        |\D_1 w|(\Omega) + |\D_2 z|(\Omega) < \infty \, .
    \end{equation*}
    Then $(w,z) \in BV(\Omega;\RR^2)$. 
\end{proposition}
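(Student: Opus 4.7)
The approach is to exploit the pointwise rigidity of the jump geometry recalled in Subsection~\ref{subsec:rigid jump part}: once $J_{(w,z)}$ is partitioned according to the direction of its normal, each of the four partial derivatives of $(w,z)$ turns out to be an explicit multiple of $\H^1$ restricted to one of the pieces, so a bound on $\D_1 w$ and $\D_2 z$ automatically controls $\D_2 w$ and $\D_1 z$.

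I would first work on an arbitrary $\Omega' \subcc \Omega$, where by hypothesis $(w,z) \in BV(\Omega';\RR^2)$. Since $w$ and $z$ attain only the values $\pm 1$ almost everywhere, the level sets $\{w = 1\}$ and $\{z = 1\}$ have locally finite perimeter, so $\D w$ and $\D z$ are purely jump; in particular $\D(w,z) = \D^j(w,z)$ on $\Omega'$, and the curl-free condition combined with Subsection~\ref{subsec:BV gradients} forces $[(w,z)]$ to be parallel to $\nu_{(w,z)}$ on $J_{(w,z)}$. The classification~\eqref{eq:possible values of normal} then yields the $\H^1$-a.e.\ partition
\[
    J_{(w,z)} \cap \Omega' = J^h \cup J^v \cup J^d,
\]
corresponding to the cases $\nu_{(w,z)} \in \{\pm e_1\}$, $\{\pm e_2\}$ and $\big\{\pm\tfrac{1}{\sqrt 2}(1,1),\pm\tfrac{1}{\sqrt 2}(-1,1)\big\}$, respectively.

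Reading the jump amplitudes off~\eqref{eq:possible values of normal} I then obtain, on $\Omega'$, the identities
\[
    |\D_1 w| = 2\,\H^1\mres J^h + \sqrt 2\,\H^1\mres J^d, \quad |\D_2 w| = \sqrt 2\,\H^1\mres J^d,
\]
together with the mirror versions for $z$. In particular $|\D_2 w|(\Omega') \leq |\D_1 w|(\Omega)$ and $|\D_1 z|(\Omega') \leq |\D_2 z|(\Omega)$. Letting $\Omega'$ exhaust $\Omega$ and taking the supremum shows that the distributions $\D_2 w$ and $\D_1 z$ on $\Omega$ are Radon measures with finite total variation. Combined with the hypothesis this gives $\D(w,z) \in \M_b(\Omega;\RR^{2\times 2})$, whence $(w,z) \in BV(\Omega;\RR^2)$.

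The only delicate point is the identification $\D(w,z) = \D^j(w,z)$, i.e.\ the absence of diffuse parts; here I use that each component attains only two values, so by the chain rule $\D w = 2\,\D\chi_{\{w=1\}}$ is a (locally finite) perimeter measure, and similarly for $z$. Once this is granted, the rest of the argument is elementary bookkeeping on the three pieces of the jump set.
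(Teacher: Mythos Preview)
Your proposal is correct and follows essentially the same route as the paper: partition $J_{(w,z)}$ according to the direction of the normal (the paper's $\mathcal{J}_1,\mathcal{J}_2,\mathcal{J}_3$ are your $J^h,J^v,J^d$) and read off that the diagonal piece alone carries $|\D_2 w|$ and $|\D_1 z|$, which is then dominated by the given quantities. The only cosmetic differences are that the paper bounds $|\D_2 w|$ by $|\D_2 z|$ rather than by $|\D_1 w|$ (both work, since on $J^d$ one has $|[w]\nu^2|=|[z]\nu^2|=|[w]\nu^1|$), and that you are more explicit about the $BV_{\mathrm{loc}}$-to-$BV$ passage via exhaustion and about why $\D(w,z)$ has no diffuse part.
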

\begin{proof}
    Thanks to the observation made above to deduce~\eqref{eq:possible values of normal}, we have that for $\H^1$-a.e.\ $x \in J_{(w,z)}$ the triple $((w,z)^+(x), (w,z)^-(x),\nu_{(w,z)}(x))$ attains one of the values listed in~\eqref{eq:possible values of normal}. We now write $J_{(w,z)}$ (up to $\H^1$ null sets) as the union of the three sets $\mathcal{J}_1$, $\mathcal{J}_2$, $\mathcal{J}_3$ of points $x \in J_{(w,z)}$ such that respectively
    \begin{align*}
        ((w,z)^+(x), (w,z)^-(x),\nu_{(w,z)}(x)) &\in \Big\{ \Big( \vc{-1}{1} , \vc{1}{1}, \vc{\pm 1}{0}  \Big) \, , \Big( \vc{-1}{-1} , \vc{1}{-1}, \vc{\pm 1}{0}  \Big) \Big\} \, , \\
        ((w,z)^+(x), (w,z)^-(x),\nu_{(w,z)}(x)) &\in \Big\{ \Big( \vc{1}{-1} , \vc{1}{1}, \vc{0}{\pm 1}  \Big)  \, , \Big( \vc{-1}{-1} , \vc{-1}{1}, \vc{0}{\pm 1}  \Big) \Big\} \, , \\
        ((w,z)^+(x), (w,z)^-(x),\nu_{(w,z)}(x)) &\in \Big\{  \Big( \vc{-1}{-1} , \vc{1}{1}, \vc{\pm 1/\sqrt{2}}{\pm 1/\sqrt{2}}  \Big) \, , \Big( \vc{-1}{1} , \vc{1}{-1}, \vc{\mp 1/\sqrt{2}}{\pm 1/\sqrt{2}}  \Big) \Big\} \, .
    \end{align*}
    We remark that
    \begin{equation*}
        \begin{aligned}
            \nu_{(w,z)}^2(x) &= 0 \, , \quad && \text{for $\H^1$-a.e.\ $x \in \mathcal{J}_1$} \, ,\\
            [w](x) &= 0 \, , \quad && \text{for $\H^1$-a.e.\ $x \in \mathcal{J}_2$} \, ,\\
            [w](x) &= \pm [z](x) \, ,\quad && \text{for $\H^1$-a.e.\ $x \in \mathcal{J}_3$} \, .
        \end{aligned}
        \end{equation*}
    Therefore
    \begin{equation*}
        \begin{split}
            |\D_2 w|(\Omega) & = \int \limits_{J_{(w,z)}} \! \! | [w] \, \nu_{(w,z)}^2 | \, \d \H^1 \\
            & = \int \limits_{\mathcal{J}_1} \!  |[w] \, \nu_{(w,z)}^2 | \, \d \H^1 + \int \limits_{\mathcal{J}_2} \!  | [w] \, \nu_{(w,z)}^2 | \d  \H^1+ \int \limits_{\mathcal{J}_3} \!  | [w] \, \nu_{(w,z)}^2 | \, \d \H^1  \\
            & = \int \limits_{\mathcal{J}_3} \! | [z] \, \nu_{(w,z)}^2 | \, \d \H^1 \leq |\D_2 z|(\Omega)
        \end{split}
    \end{equation*}
    Analogously we prove that $|\D_1 z|(\Omega) \leq |\D_1 w|(\Omega) < \infty$.
\end{proof}

\begin{remark}
    Let us fix $(w,z) \in L^1(\Omega;\RR^2)$ satisfying 
    \begin{gather*}
        w(x),z(x) \in \{1,-1\} \quad \text{for a.e.\ } x \in \Omega \, , \\
    \D_1 w, \D_2 z \in \M_b(\Omega) \, ,  \\
    \curl(w,z) = 0 \  \text{ in } \mathcal{D}'(\Omega) \, .
    \end{gather*}
    A natural question related to a problem already presented in the introduction is whether the condition $(w,z) \in BV(\Omega;\RR^2)$ follows immediately from the structure of $w$ and $z$ as indicator functions. More precisely, let 
    \begin{equation*}
        \begin{aligned}
            E_{+,+} & := \{ w = 1,\  z = 1 \} \, , && E_{+,-}  := \{ w = 1,\  z = - 1 \} \, , \\
            E_{-,+} & := \{ w = -1,\  z =  1 \} \, , && E_{-,-}  := \{ w = - 1,\  z = - 1 \} \, ,
        \end{aligned}
    \end{equation*}
and let $\mathds{1}_E$ denote the function given by $\mathds{1}_E(x) = 1$ if $x \in E$ and $\mathds{1}_E(x)=0$ if $x \notin E$. Then 
\begin{equation} \label{eq:char add up to 1}
    \mathds{1}_{E_{+,+}} +  \mathds{1}_{E_{+,-}} +  \mathds{1}_{E_{-,+}} + \mathds{1}_{E_{-,-}} = 1 \, ,
\end{equation}
and
\begin{align*}
    w = \mathds{1}_{E_{+,+}} +  \mathds{1}_{E_{+,-}} -  \mathds{1}_{E_{-,+}} - \mathds{1}_{E_{-,-}} \,, \\
    z = \mathds{1}_{E_{+,+}} -  \mathds{1}_{E_{+,-}} +  \mathds{1}_{E_{-,+}}  - \mathds{1}_{E_{-,-}} \, .
\end{align*}
From~\eqref{eq:char add up to 1} it follows that 
\begin{align}
    \D_1 \mathds{1}_{E_{+,+}} + \D_1 \mathds{1}_{E_{+,-}} +  \D_1 \mathds{1}_{E_{-,+}} + \D_1 \mathds{1}_{E_{-,-}} = 0  \label{eq:2703191136}\, , \\
    \D_2 \mathds{1}_{E_{+,+}} + \D_2 \mathds{1}_{E_{+,-}} +  \D_2\mathds{1}_{E_{-,+}} + \D_2 \mathds{1}_{E_{-,-}} = 0 \, .
\end{align}
Since $\D_1 w, \D_2 z \in \M_b(\Omega)$ we have that 
\begin{align}
     \D_1 \mathds{1}_{E_{+,+}} + \D_1 \mathds{1}_{E_{+,-}} -  \D_1 \mathds{1}_{E_{-,+}} - \D_1 \mathds{1}_{E_{-,-}}  \in \M_b(\Omega) \,, \\
    \D_2 \mathds{1}_{E_{+,+}}- \D_2 \mathds{1}_{E_{+,-}}  + \D_2 \mathds{1}_{E_{-,+}}  - \D_2 \mathds{1}_{E_{-,-}}  \in \M_b(\Omega)\, ,
\end{align}
while $\curl(w,z) = 0$ yields 
\begin{equation} \label{eq:2703191137}
    \D_2 \mathds{1}_{E_{+,+}} + \D_2 \mathds{1}_{E_{+,-}} -  \D_2 \mathds{1}_{E_{-,+}} - \D_2 \mathds{1}_{E_{-,-}}  - \D_1 \mathds{1}_{E_{+,+}}+ \D_1 \mathds{1}_{E_{+,-}}  - \D_1 \mathds{1}_{E_{-,+}}  + \D_1 \mathds{1}_{E_{-,-}}  = 0 \, .
\end{equation}
Proving that $(w,z) \in BV(\Omega;\RR^2)$ reduces to prove that $\D_2 \mathds{1}_{E_{+,+}} + \D_2 \mathds{1}_{E_{+,-}} -  \D_2 \mathds{1}_{E_{-,+}} - \D_2 \mathds{1}_{E_{-,-}}  \in \M_b(\Omega)$ (or, equivalently, that $ \D_1 \mathds{1}_{E_{+,+}}- \D_1 \mathds{1}_{E_{+,-}}  +\D_1 \mathds{1}_{E_{-,+}}  - \D_1 \mathds{1}_{E_{-,-}}  \in \M_b(\Omega)$). This condition follows from~\eqref{eq:2703191136}--\eqref{eq:2703191137} if $\D_2 \mathds{1}_{E_{+,+}} + \D_2 \mathds{1}_{E_{+,-}} -  \D_2 \mathds{1}_{E_{-,+}} - \D_2 \mathds{1}_{E_{-,-}}$ can be written as a linear combination of the expressions appearing in~\eqref{eq:2703191136}--\eqref{eq:2703191137}. However, this is not possible, since the matrix
 \begin{equation*}
     \begin{pmatrix}
         1 & 1 & 1 & 1 & 0 & 0 & 0 & 0 \\
         0 & 0 & 0 & 0 & 1 & 1 & 1 & 1 \\ 
         1 & 1 & - 1 & - 1 & 0 & 0 & 0 & 0 \\
         0 & 0 & 0 & 0 & 1 & - 1 & 1 & - 1 \\
         - 1 & 1 & - 1 & 1 & 1 & 1 & - 1 & - 1 \\
         0 & 0 & 0 & 0 & 1 & 1 & - 1 & - 1
     \end{pmatrix}
 \end{equation*} 
 has maximal rank.
\end{remark}

We conclude this section by showing a convenient result that allows us to work with potentials of the vector field $(w,z)$. This result will be useful in the proof of the limsup inequality. The definition and the properties of weakly Lipschitz sets that appear in the statement of the next lemma are recalled in the Appendix. 

\begin{lemma} \label{lemma:existence of potential}
    Let $\Omega \subset \RR^2$ be an open, bounded, weakly Lipschitz, and simply connected set. Assume that $(w,z) \in L^\infty(\Omega;\RR^2) \cap BV(\Omega;\RR^2)$, and $\curl(w,z) = 0$ in $\mathcal{D}'(\Omega)$. Then there exists a~$\varphi \in BVG(\Omega)$ such that $\nabla \varphi = (w,z)$. 
\end{lemma}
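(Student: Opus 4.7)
The strategy is the classical one of approximating by mollification, applying the smooth Poincaré lemma, and passing to the limit, with care taken due to the weak Lipschitz (not Lipschitz) regularity of $\de \Omega$.

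First, I would exhaust $\Omega$ from inside by a sequence $\Omega_k \subcc \Omega_{k+1} \subcc \Omega$ of smooth, open, simply connected sets with $\bigcup_k \Omega_k = \Omega$. Fix $x_0 \in \Omega_1$, set $M := \|(w,z)\|_{L^\infty(\Omega)}$, and let $\rho_\e$ be a standard mollifier. For $\e < \dist(\Omega_k, \de \Omega)$, the smooth field $(w_\e,z_\e) := (w,z) * \rho_\e$ is defined on $\Omega_k$, satisfies $\curl(w_\e, z_\e) = 0$ in $\Omega_k$ (since convolution commutes with distributional derivatives), obeys $\|(w_\e,z_\e)\|_{L^\infty(\Omega_k)} \leq M$, and converges to $(w,z)$ in $L^1(\Omega_k;\RR^2)$ as $\e \to 0$.

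Since $\Omega_k$ is smooth and simply connected, the classical Poincaré lemma yields a smooth $\varphi_\e^{(k)} \in C^\infty(\Omega_k)$ with $\nabla \varphi_\e^{(k)} = (w_\e,z_\e)$ and normalized by $\varphi_\e^{(k)}(x_0) = 0$. Because $\Omega_k$ is a smooth domain, it is quasi-convex with some geodesic constant $L_k$, and integrating the uniformly bounded gradient along geodesics gives the equi-Lipschitz bound $|\varphi_\e^{(k)}(x) - \varphi_\e^{(k)}(y)| \leq L_k M |x-y|$. Together with the normalization, $\{\varphi_\e^{(k)}\}_\e$ is uniformly bounded. Arzelà--Ascoli extracts a subsequence $\varphi_\e^{(k)} \to \varphi^{(k)}$ uniformly on $\overline{\Omega_k}$, and $\nabla \varphi_\e^{(k)} \wstar \nabla \varphi^{(k)}$ in $L^\infty(\Omega_k;\RR^2)$; identifying this weak-$*$ limit with the $L^1$-limit $(w,z)$ of $(w_\e,z_\e)$, we get $\nabla \varphi^{(k)} = (w,z)$ a.e.\ on $\Omega_k$.

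A diagonal argument combined with the uniqueness of the potential up to additive constants (and the common normalization $\varphi^{(k)}(x_0) = 0$) glues the $\varphi^{(k)}$ into a function $\varphi \in W^{1,\infty}_{\mathrm{loc}}(\Omega)$ with $\nabla \varphi = (w,z)$ a.e.\ in $\Omega$. To upgrade to $\varphi \in W^{1,\infty}(\Omega)$ one needs that the intrinsic (geodesic) distance in $\Omega$ is bounded by a uniform multiple of the Euclidean one; for a bounded, simply connected, weakly Lipschitz planar domain this follows from the local graph representation at the boundary used in the appendix and the boundedness of $\Omega$. Finally, since $\nabla \varphi = (w,z) \in BV(\Omega;\RR^2)$ by assumption, the definition of $BVG$ gives $\varphi \in BVG(\Omega)$, as required.

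The only genuinely delicate point is the final global Lipschitz bound: we do not have Lipschitz regularity of $\de\Omega$, only the weak Lipschitz hypothesis, so the uniform bound $L_k \leq L$ independent of $k$ must be deduced from the geometry of weakly Lipschitz planar domains rather than taken for granted. All other steps are standard once this geodesic control is in place.
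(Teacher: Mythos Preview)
Your overall strategy---mollify, apply the smooth Poincar\'e lemma on simply connected exhausting subdomains, pass to the limit, then upgrade to a global $L^\infty$ bound---is exactly the paper's approach. The differences are in how two of the steps are executed, and in one of them you gloss over precisely the point the paper works hardest on.

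\textbf{Exhaustion by simply connected sets.} You begin by asserting the existence of smooth, open, simply connected $\Omega_k \subcc \Omega_{k+1} \subcc \Omega$ exhausting $\Omega$. In the plane this can indeed be arranged (e.g.\ via the Riemann map, pulling back disks $B_r$), but you do not say how, and the paper devotes most of its proof to constructing such a family: it builds a bi-Lipschitz collar $\Gamma \colon \de\Omega \times (-t,t) \to \RR^2$ (citing~\cite{Lic}), defines $\Omega_s = \Omega \setminus \Gamma(\de\Omega \times [-s,0))$, and then exhibits an explicit homeomorphism $f_s \colon \Omega \to \Omega_s$ to verify simple connectedness. You should at least indicate why your exhaustion exists.

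\textbf{Compactness of the approximate potentials.} Here your route is slightly different and arguably cleaner: you normalize at a point, use quasi-convexity of the smooth $\Omega_k$ to get an equi-Lipschitz bound, and apply Arzel\`a--Ascoli. The paper instead normalizes by zero mean on $\Omega_{s_0}$, invokes the Poincar\'e--Wirtinger inequality on weakly Lipschitz sets (Remark~\ref{rmk:rellich-kondrachov in weakly lipschitz}), and extracts a weak-$*$ limit in $L^\infty$. Both work.

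\textbf{Global $L^\infty$ bound.} You correctly flag this as the delicate point. However, note that weakly Lipschitz domains are defined via bi-Lipschitz \emph{charts}, not graph representations, so your appeal to ``the local graph representation'' is a slight misreading. The paper does not prove quasi-convexity; instead Proposition~\ref{prop:bounded gradient and Lipschitz boundary} argues directly: near each boundary point the bi-Lipschitz chart transports $\varphi$ to a function with bounded gradient on a convex half-cube, hence locally Lipschitz there, and a finite cover of $\de\Omega$ gives boundedness. Your quasi-convexity claim would also suffice and follows from essentially the same local argument, but you should not leave it as an assertion.
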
  
\begin{proof}
    The proof is based on a smoothing argument which requires some care. 

    We start by finding a family of open, bounded, weakly Lipschitz, and simply connected sets $(\Omega_s)_s$ compactly contained in $\Omega$ that exhausts $\Omega$. To do so, we rely on a result proven in~\cite[Theorem 2.3]{Lic} (see also~\cite[Theorem 7.4 and Corollary 7.5]{Lui-Vai}), which guarantees that there exists a $t > 0$ and a bi-Lipschitz map $\Gamma \colon \de \Omega \x (-t,t) \to \Gamma(\de \Omega \x (-t,t)) \subset \RR^2$ such that the image of~$\Gamma$ is an open neighborhood of $\de \Omega$, $\Gamma(x,0) = x$ for  $x \in \de \Omega$, $\Gamma(\de \Omega \x (-t, 0)) \subset \Omega$, and $\Gamma(\de \Omega \x (0, t)) \subset \RR^2 \sm \ol{\Omega}$.
   For $s \in (0,t)$ we define the set $\Omega_s = \Omega \sm \Gamma( \de \Omega \x [-s, 0) ) \subset \Omega$. According to~\cite[Theorem 2.3]{Lic}, we can assume that each $\Omega_s$ is an open, bounded, and weakly Lipschitz set. We claim that each $\Omega_s$ is simply connected. To prove this we find a homeomorphism $f_s \colon \Omega \to \Omega_s$ by shrinking a neighborhood of the boundary of $\Omega$ along the paths $\tau \mapsto \Gamma(x_0, \tau)$. 
    We fix some $t_* \in (0,t)$ and for $s \in (0,t_*)$ we define
    \begin{equation*}
    	G_s \colon \de \Omega \x [-t_*,0] \to \de \Omega \x [-t_*, -s] \, , \quad G_s(x_0, \tau) = \Big(x_0, -s - \tau \frac{s-t_*}{t_*}\Big) \, .
	\end{equation*}
	Note that $G_s$ is a homeomorphism. We then define
    \begin{equation*}
    	f_s \colon \Omega \to \Omega_s \, , \quad f_s(x) :=
    	\begin{cases}
    		\Gamma(G_s(\Gamma^{-1}(x))) & \text{if } x \in \Gamma(\de \Omega \x [-t_*, 0)) \, ,\\
    		x & \text{if } x \in \Omega \setminus \Gamma(\de \Omega \x (-t_*, 0)) \, .
    	\end{cases}
    \end{equation*}

    Notice that $\Gamma(G_s(\Gamma^{-1}(x))) = x$ for $x \in \Gamma(\de \Omega \x \{ -t_* \})$ and that $f_s|_{\Gamma(\de \Omega \x [-t_*, 0))}$ and $f_s|_{\Omega \setminus \Gamma(\de \Omega \x (-t_*, 0))}$ are continuous functions. This yields that $f_s$ is continuous, since the sets  $\Gamma(\de \Omega \x [-t_*, 0))$ and $\Omega \sm \Gamma(\de \Omega \x (-t_*, 0))$ are relatively closed in $\Omega$.\footnote{    Indeed, $\de \Omega \x [-t_*, 0]$ is compact and thus so is $\Gamma(\de \Omega \x [-t_*, 0])$. Consequently, $\Gamma(\de \Omega \x [-t_*, 0)) = \Gamma(\de \Omega \x [-t_*, 0]) \cap \Omega$ is relatively closed in $\Omega$. Moreover, since $\Gamma^{-1}$ is continuous, $\Gamma(\de \Omega \x (-t_*, 0))$ is relatively open in $\Gamma(\de \Omega \x (-t,t))$, which is open in $\RR^2$. In particular, $\Gamma(\de \Omega \x (-t_*, 0))$ is open. It thereby follows that $\Omega \sm \Gamma(\de \Omega \x (-t_*, 0))$ is relatively closed in $\Omega$.}
    Analogously we prove the continuity of its inverse
    \begin{equation*}
    	f_s^{-1} \colon \Omega_s \to \Omega \, , \quad f_s^{-1}(x) =
    	\begin{cases}
    		\Gamma(G_s^{-1}(\Gamma^{-1}(x))) & \text{if } x \in \Gamma(\de \Omega \x [-t_*, -s)) \, ,\\
    		x & \text{if } x \in \Omega_s \sm \Gamma(\de \Omega \x (-t_*, -s)) \, .
    	\end{cases}
    \end{equation*}
    This concludes the proof of the simple connectedness of $\Omega_s$. Note that the sets~$\Omega_s$ increase and exhaust $\Omega$ as $s$ decreases to zero. Moreover $\Omega_s \subcc \Omega$, since for every $s$ we have that $\Gamma( \de \Omega \x (-s,s))$ is an open neighborhood of $\de \Omega$ that does not intersect $\Omega_s$.

    \begin{figure}[H]

        \scalebox{0.8}{
        \includegraphics{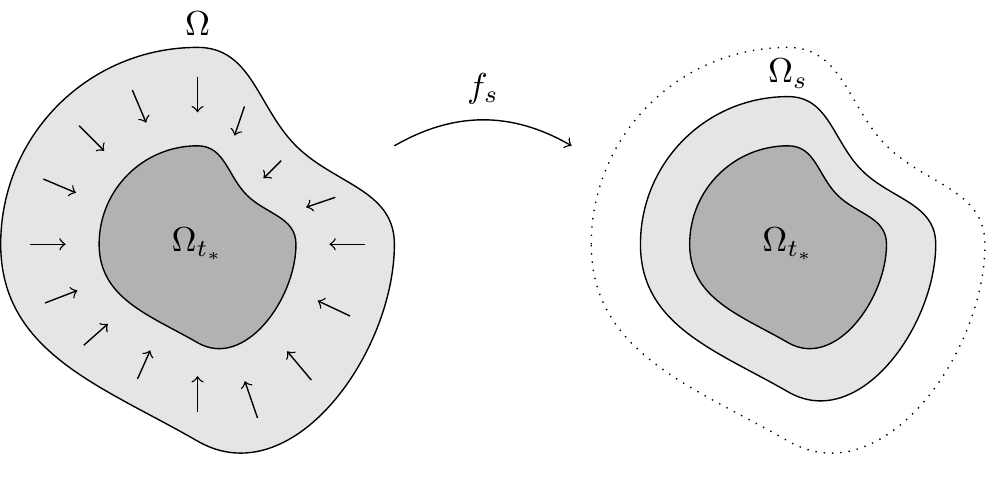}
        }
    
        \caption{The map $f_s$ transforms $\Omega$ into $\Omega_s$ by shrinking the collar $\Gamma(\de \Omega \x [-t_*, 0)) = \Omega \sm \Omega_{t_*}$ (in light grey) into the collar $\Gamma(\de \Omega \x [-t_*, -s)) = \Omega_s \sm \Omega_{t_*}$, while keeping fixed the set $\Omega \setminus \Gamma(\de \Omega \x (-t_*, 0)) = \ol \Omega_{t_*}$ (in dark grey).}
        
    \end{figure}

    We are now in a position to mollify $(w,z)$. Let us define $d_s := \dist (\Omega_s, \de \Omega) > 0$. We fix a mollifier $\rho \in C^\infty_c(B_1(0))$ with $\int \! \rho \d x = 1$, $\rho \geq 0$, and define for $s > 0$ the function $\rho_s(z) := \frac{1}{d_s^2} \rho(\frac{z}{d_s})$. Then we put  $(w_s,z_s):= \rho_s * (w,z) \in C^\infty(\Omega_s; \RR^2)$. Note that  $\| (w_s,z_s) \|_{L^\infty(\Omega_s)}$ is bounded and that $(w_s,z_s) \to (w,z)$ in $\Lloc(\Omega;\RR^2)$. 
    Moreover $\curl(w_s,z_s) = \rho_s * \curl(w,z) = 0$ in $\Omega_s$. Since the set $\Omega_s$ is simply connected, we find a function $\varphi_s \in C^\infty(\Omega_s; \RR)$ such that $\nabla \varphi_s = (w_s,z_s)$ in $\Omega_s$.
    
    A limit of $\varphi_s$ as $s \to 0$ will be a good candidate for a potential of $(w,z)$. To find such a limit, we proceed as follows. We fix an $s_0 \in (0, t_*)$. For $s < s_0$ we define 
    \begin{equation*}
    	\varphi_{s,s_0} := \varphi_s - \frac{1}{| \Omega_{s_0} |} \integral{\Omega_{s_0}} {\varphi_s}{\d x} \, .
    \end{equation*} 
    The above function is well-defined since $\varphi_s \in C(\ol{\Omega}_{s_0})$. Note that $\nabla \varphi_{s,s_0} =(w_s,z_s)$ in~$\Omega_{s_0}$. Since $\Omega_{s_0}$ is a connected, weakly Lipschitz set, the Poincar\'e-Wirtinger inequality (cf.\ Remark~\ref{rmk:rellich-kondrachov in weakly lipschitz}) yields boundedness of $(\varphi_{s,s_0})_s$ in $L^\infty(\Omega_{s_0})$.
    Thus, a subsequence converges weakly* in $L^\infty(\Omega_{s_0})$ to some $\varphi^{s_0} \in L^\infty(\Omega_{s_0})$. Using the convergence $(w_s,z_s) \to (w,z)$ in $L^1(\Omega_{s_0};\RR^2)$ we obtain that $\nabla \varphi^{s_0} = (w,z)$ in $\Omega_{s_0}$ and~$\varphi^{s_0} \in W^{1,\infty}(\Omega_{s_0})$. Repeating this construction for a sequence $s_n \to 0$ (in place of $s_0$) we see that $(w,z)$ has potentials $\varphi^{s_n}$ on connected sets $\Omega_{s_n}$ which increase and exhaust $\Omega$ as $n \to \infty$. Note that for every $n$ there exists a constant $c_n$ such that $\varphi^{s_n} - c_n = \varphi^{s_0}$ a.e.\ in $\Omega_{s_0}$ and moreover $\varphi^{s_n} -c_n = \varphi^{s_m} - c_m$ a.e.\ in $\Omega_{s_n} \cap \Omega_{s_m}$ for all $n$ and $m$. We define the function $\varphi \in L^\infty_{\mathrm{loc}}(\Omega)$ by
    \begin{equation*}
    	\varphi (x) := \varphi^{s_n} (x) - c_n \quad \text{for } x \in \Omega_{s_n} \, ,
    \end{equation*}
    for any $n$.   The function  $\varphi$ belongs to $W^{1,\infty}_{\text{loc}}(\Omega)$ and is a potential of $(w,z)$. By Proposition~\ref{prop:bounded gradient and Lipschitz boundary} in the Appendix it follows that $\varphi \in L^\infty(\Omega)$ and this concludes the proof.
 \end{proof}

\section{Proof of compactness and of the  liminf  inequality} 

In this section we prove the compactness result and the liminf   inequality, i.e., Theorem~\ref{thm:main}-(i), (ii). As explained in the introduction, the key to our $\Gamma$-convergence analysis is the relation of $H_n$ to a Modica-Mortola-type functional as in~\eqref{eqintro:MM}. The starting point is to prove a precise version of this approximate equality. To state this result, we need to introduce a correcting factor $\rho \colon [-\pi,\pi]^2 \to \RR$ defined by 
    \begin{equation} \label{eq:def of rho}
        \rho(\theta_1, \theta_2) := \begin{cases} \displaystyle
            \frac{-  \big(1 - \cos(\theta_1 + \theta_2 ) \big) + \sin^2(\theta_1) + \sin^2(\theta_2)}{ 2 \Big( \sin\big( \tfrac{1}{2}\theta_2 \big) - \sin\big( \tfrac{1}{2} \theta_1 \big) \Big)^2 } & \text{if } \theta_1 \neq \theta_2 \, , \\
            1 & \text{if } \theta_1 = \theta_2 \, .
        \end{cases}
    \end{equation}

    We set $W(s) := (1-s^2)^2$ and
    \begin{equation*}
        \en := \frac{\ln}{\sqrt{2 \dn}}   
    \end{equation*}
    and we work under the assumption $\en \to 0$.

\begin{lemma} \label{lemma:H is a discrete MM}
    Let $(w,z) \in \T_n(\PC_{\ln}(\SS^1))$ be as in~\eqref{eq:order parameter} and let $\rho$ be as in~\eqref{eq:def of rho}. Then
    \begin{align*}
            \Hh_n(w,z;\Omega) & =  \frac{1}{2\en} \ln^2 \hspace{-1em} \sum_{(i,j) \in \I^n(\Omega)} \hspace{-1em}   W(w^{i,j})  + W(w^{i+1,j}) + \en \ln^2 \hspace{-1em}  \sum_{(i,j) \in \I^n(\Omega)} \hspace{-1em}  (\rho^\mathrm{hor})^{i,j} \big| \de^{\mathrm{d}}_{1} w^{i,j} \big|^2  , \\
            \Hv_n(w,z;\Omega) & =  \frac{1}{2\en} \ln^2 \hspace{-1em} \sum_{(i,j) \in \I^n(\Omega)} \hspace{-1em}   W(z^{i,j})  + W(z^{i,j+1}) + \en \ln^2 \hspace{-1em}  \sum_{(i,j) \in \I^n(\Omega)} \hspace{-1em}  (\rho^\mathrm{ver})^{i,j} \big| \de^{\mathrm{d}}_{2} z^{i,j} \big|^2,
    \end{align*}
    where $(\rho^\mathrm{hor})^{i,j} = \rho((\theth)^{i,j},(\theth)^{i+1,j})$ and  $(\rho^\mathrm{ver})^{i,j} = \rho((\thetv)^{i,j},(\thetv)^{i,j+1})$.
\end{lemma}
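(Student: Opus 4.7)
The plan is to establish the claim as a pointwise algebraic identity at each index $(i,j) \in \I^n(\Omega)$ and then sum; I treat only the horizontal case, as the vertical one is entirely symmetric. Fix $(i,j)$ and write $\theta_1 = (\theth)^{i,j}$, $\theta_2 = (\theth)^{i+1,j}$, $s_k = \sin(\theta_k/2)$, so that by~\eqref{eq:order parameter} one has $w^{i,j} = \sqrt{2/\dn}\, s_1$ and $w^{i+1,j} = \sqrt{2/\dn}\, s_2$. Since each $u^\sigma \in \SS^1$ and the oriented angles are defined by~\eqref{eq:def of theth}, the three scalar products $u^{i,j} \cdot u^{i+1,j}$, $u^{i+1,j}\cdot u^{i+2,j}$, $u^{i,j}\cdot u^{i+2,j}$ equal $\cos\theta_1$, $\cos\theta_2$, $\cos(\theta_1+\theta_2)$ respectively.

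Expanding the squared norm appearing in $\Hh_n$, substituting $\alpha_n = 4(1-\dn)$, and rewriting $\cos\theta_k = 1 - 2 s_k^2$, a direct computation yields
$$\Big| u^{i+2,j} - \frac{\alpha_n}{2}\, u^{i+1,j} + u^{i,j}\Big|^2 = 4\dn^2 + (8-8\dn)(s_1^2 + s_2^2) - 2\big(1 - \cos(\theta_1+\theta_2)\big).$$
The central step is then to match this with the claimed decomposition; concretely, I would verify the identity
$$4\dn^2 + (8-8\dn)(s_1^2+s_2^2) - 2\big(1-\cos(\theta_1+\theta_2)\big) = 2\big[(\dn-2s_1^2)^2 + (\dn-2s_2^2)^2\big] + 4\,\rho(\theta_1,\theta_2)\,(s_2-s_1)^2.$$
Expanding $(\dn-2s_k^2)^2$, all $\dn$-dependent contributions cancel and the identity reduces, via $\sin^2\theta_k = 4 s_k^2(1-s_k^2)$, to
$$2(\sin^2\theta_1 + \sin^2\theta_2) - 2\big(1-\cos(\theta_1+\theta_2)\big) = 4\,\rho(\theta_1,\theta_2)\,(s_2-s_1)^2,$$
which is exactly the defining relation~\eqref{eq:def of rho} of $\rho$ when $\theta_1 \neq \theta_2$; in the degenerate case $\theta_1 = \theta_2$ one has $s_1 = s_2$ (since $\sin$ is injective on $[-\pi/2,\pi/2)$) and both sides vanish independently of the conventional value $\rho(\theta,\theta) = 1$.

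The remaining step is purely bookkeeping on the prefactors: multiplying the pointwise identity by $\ln/(2\sqrt{2}\,\dn^{3/2})$ so that the left-hand side becomes the $(i,j)$-summand of $\Hh_n(u;\Omega)$, the two terms on the right collapse to $\frac{\ln^2}{2\en}\bigl[W(w^{i,j})+W(w^{i+1,j})\bigr]$ and $\en \ln^2\,\rho((\theth)^{i,j},(\theth)^{i+1,j})\,|\de_1^{\mathrm{d}} w^{i,j}|^2$, once one inserts $W(w)=(1-w^2)^2$, $\en = \ln/\sqrt{2\dn}$, and $|\de_1^{\mathrm{d}} w^{i,j}|^2 = (2/\dn)(s_2-s_1)^2/\ln^2$. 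Summing over $(i,j)\in \I^n(\Omega)$ gives the first formula; the second follows verbatim by replacing $\de_1^{\mathrm{d}},\theth,w$ with $\de_2^{\mathrm{d}},\thetv,z$. No genuine obstacle is expected here — the proof is essentially a trigonometric verification — and the only nontrivial point is recognising the correct splitting: the function $\rho$ in~\eqref{eq:def of rho} is tailor-made precisely to absorb the non-Modica--Mortola remainder $\sin^2\theta_1 + \sin^2\theta_2 - (1-\cos(\theta_1+\theta_2))$ into a weighted squared difference of the chirality order parameter.
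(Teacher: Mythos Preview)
Your proposal is correct and follows essentially the same approach as the paper: both expand the squared norm $\big|u^{i+2,j} - \tfrac{\alpha_n}{2}u^{i+1,j} + u^{i,j}\big|^2$ in terms of the angles, isolate the potential terms $(\dn - 2s_k^2)^2 = \dn^2 W(w)$, and invoke the definition of $\rho$ to absorb the remainder $\sin^2\theta_1 + \sin^2\theta_2 - (1-\cos(\theta_1+\theta_2))$ as a weighted squared difference. The only difference is organizational: the paper splits the expansion into several intermediate equalities (separating the constant $2 - \alpha_n + \alpha_n^2/8 = 2\dn^2$ and introducing the $\sin^2\theta_k$ terms via the identity $2\dn (w^{i,j})^2\big(1 - \tfrac{\dn}{2}(w^{i,j})^2\big) = \sin^2((\theth)^{i,j})$), whereas you verify the identity in one pass.
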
 
\begin{proof}
    Let $u \in \PC_{\ln}(\SS^1)$ be such that $(w,z) = T_n(u)$.
    We start by observing that 
    \begin{equation} \label{eq:1912181339}
        \begin{split}
                       & \frac{1}{2}  \ \ln^2 \hspace{-1em}  \sum_{(i,j) \in \I^n(\Omega)} \Big| u^{i+2,j} - \frac{\alpha_n}{2} u^{i+1,j}  + u^{i,j} \Big|^2 \\
                       & \quad  = \ln^2 \hspace{-1em}  \sum_{(i,j) \in \I^n(\Omega)} \hspace{-1em} u^{i+2,j} \cdot u^{i,j}   - \alpha_n  \frac{1}{2} \ \ln^2 \hspace{-1em} \sum_{(i,j) \in \I^n(\Omega)} \hspace{-1em} u^{i+2,j} \cdot u^{i+1,j} + u^{i+1,j} \cdot u^{i,j}  \\
                       & \quad \quad +  \Big(1 + \frac{\alpha_n^2}{8} \Big) \ \ln^2 \#\I^n(\Omega) \\
                       & \quad = - \ln^2 \hspace{-1em}  \sum_{(i,j) \in \I^n(\Omega)} \hspace{-1em} (1 - u^{i+2,j} \cdot u^{i,j} ) \\
                       & \quad \quad + \alpha_n  \frac{1}{2} \ \ln^2 \hspace{-1em} \sum_{(i,j) \in \I^n(\Omega)} \hspace{-1em} (1 - u^{i+2,j} \cdot u^{i+1,j}) + (1 - u^{i+1,j} \cdot u^{i,j}) \\
                       & \quad \quad + \Big(2 - \alpha_n + \frac{\alpha_n^2}{8} \Big) \ \ln^2 \#\I^n(\Omega)  \, .
    \end{split}
\end{equation}
On the one hand, by~\eqref{eq:order parameter} and since $2 \dn (w^{i,j})^2 \big(1 - \tfrac{\dn}{2}(w^{i,j})^2 \big) = \sin^2((\theth)^{i,j})$, we get that
\begin{equation} \label{eq:1912181340}
    \begin{split}
        & \alpha_n  \frac{1}{2} \ \ln^2 \hspace{-1em} \sum_{(i,j) \in \I^n(\Omega)} \hspace{-1em}  (1 - u^{i+1,j} \cdot u^{i,j}) \\
        & \quad =  \alpha_n  \frac{1}{2} \ \ln^2 \hspace{-1em} \sum_{(i,j) \in \I^n(\Omega)} \hspace{-1em}  \big(1 - \cos((\theth)^{i,j}) \big) =  4(1-\dn)  \ \ln^2 \hspace{-1em} \sum_{(i,j) \in \I^n(\Omega)} \hspace{-1em}  \sin^2\big( \tfrac{1}{2} (\theth)^{i,j} \big)  \\
         & \quad =   \ln^2 \hspace{-1em} \sum_{(i,j) \in \I^n(\Omega)} \hspace{-1em}  (2 \dn - 2\dn^2)  (w^{i,j})^2 - 2 \dn (w^{i,j})^2 \big(1 - \tfrac{\dn}{2}(w^{i,j})^2 \big) + \sin^2((\theth)^{i,j}) \\
         & \quad = \dn^2 \ln^2 \hspace{-1em} \sum_{(i,j) \in \I^n(\Omega)} \hspace{-1em}   \big( 1- (w^{i,j})^2 \big)^2  +  \ln^2 \hspace{-1em} \sum_{(i,j) \in \I^n(\Omega)} \hspace{-1em} \sin^2((\theth)^{i,j}) - \dn^2 \ln^2 \#\I^n(\Omega) \, .\\
    \end{split}
\end{equation}
Analogously, we have that 
\begin{equation} \label{eq:1912181341}
    \begin{split}
        & \alpha_n  \frac{1}{2} \ \ln^2 \hspace{-1em} \sum_{(i,j) \in \I^n(\Omega)} \hspace{-1em}  (1 - u^{i+2,j} \cdot u^{i+1,j}) \\
        & \quad = \dn^2 \ln^2 \hspace{-1em} \sum_{(i,j) \in \I^n(\Omega)} \hspace{-1em}   \big( 1- (w^{i+1,j})^2 \big)^2  +  \ln^2 \hspace{-1em} \sum_{(i,j) \in \I^n(\Omega)} \hspace{-1em} \sin^2((\theth)^{i+1,j}) - \dn^2 \ln^2 \#\I^n(\Omega)  \, .
    \end{split}
\end{equation}
On the other hand, using the definition of $\rho$ in~\eqref{eq:def of rho} we have that
\begin{equation} \label{eq:1912181342}
    \begin{split}
    &  \ln^2 \hspace{-1em}  \sum_{(i,j) \in \I^n(\Omega)} \hspace{-1em} - (1 - u^{i+2,j} \cdot u^{i,j} ) + \sin^2((\theth)^{i+1,j}) + \sin^2((\theth)^{i,j}) \\
    & \quad =  \ln^2 \hspace{-1em}  \sum_{(i,j) \in \I^n(\Omega)} \hspace{-1em} -  \big(1 - \cos \big((\theth)^{i,j} + (\theth)^{i+1,j} \big) \big) + \sin^2((\theth)^{i+1,j}) + \sin^2((\theth)^{i,j}) \\
    & \quad = \ln^2 \hspace{-1em}  \sum_{(i,j) \in \I^n(\Omega)} \hspace{-1em} \ln^2 \dn (\rho^\mathrm{hor})^{i,j} \big| \de^{\mathrm{d}}_{1} w^{i,j} \big|^2.
    \end{split}
\end{equation}

Gathering \eqref{eq:1912181339}--\eqref{eq:1912181342} and using the fact that $2 - \alpha_n + \frac{\alpha_n^2}{8} -2\dn^2 = 0$, we conclude that 
\begin{equation*}
    \begin{split}
        \Hh_n(w,z;\Omega) & = \frac{1}{\sqrt{2} \ln \dn^{3/2}}  \Big[ \dn^2 \ln^2 \hspace{-1em} \sum_{(i,j) \in \I^n(\Omega)} \hspace{-1em}   \big( 1- (w^{i,j})^2 \big)^2  + \big( 1- (w^{i+1,j})^2 \big)^2 \\ 
        & \hspace{10em} + \ln^4 \dn \hspace{-1em}  \sum_{(i,j) \in \I^n(\Omega)} \hspace{-1em}  (\rho^\mathrm{hor})^{i,j} \big| \de^{\mathrm{d}}_{1} w^{i,j} \big|^2\Big] \\
        & =  \frac{1}{2\en} \ln^2 \hspace{-1em} \sum_{(i,j) \in \I^n(\Omega)} \hspace{-1em}   W(w^{i,j})  + W(w^{i+1,j}) + \en \ln^2 \hspace{-1em}  \sum_{(i,j) \in \I^n(\Omega)} \hspace{-1em}  (\rho^\mathrm{hor})^{i,j} \big| \de^{\mathrm{d}}_{1} w^{i,j} \big|^2.
    \end{split}
\end{equation*}
This concludes the proof of the equality for $\Hh_n$. The equality for $\Hv_n$ is proven analogously. 
\end{proof}

In the next lemma we explain in which sense the correcting factor $\rho$ is close to 1.

\begin{lemma} \label{lemma:rho is close to 1} The function $\rho$ defined in~\eqref{eq:def of rho} is bounded from below and
    \begin{equation*}
        \lim_{(\theta_1,\theta_2) \to (0,0)} \rho(\theta_1, \theta_2) = 1 \, .
    \end{equation*}
\end{lemma}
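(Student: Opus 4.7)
The plan is to put $\rho$ into a simple closed form by applying sum-to-product identities to numerator and denominator separately. For the numerator, writing $\sin^2\theta=(1-\cos 2\theta)/2$ and using $\cos(2\theta_1)+\cos(2\theta_2)=2\cos(\theta_1+\theta_2)\cos(\theta_1-\theta_2)$, it collapses to $\cos(\theta_1+\theta_2)\bigl(1-\cos(\theta_1-\theta_2)\bigr)=2\cos(\theta_1+\theta_2)\sin^2\!\bigl(\tfrac{\theta_1-\theta_2}{2}\bigr)$. For the denominator, the difference-of-sines formula gives $\sin(\theta_2/2)-\sin(\theta_1/2)=2\cos\!\bigl(\tfrac{\theta_1+\theta_2}{4}\bigr)\sin\!\bigl(\tfrac{\theta_2-\theta_1}{4}\bigr)$, so it equals $8\cos^2\!\bigl(\tfrac{\theta_1+\theta_2}{4}\bigr)\sin^2\!\bigl(\tfrac{\theta_1-\theta_2}{4}\bigr)$. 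Expanding $\sin^2\!\bigl(\tfrac{\theta_1-\theta_2}{2}\bigr)=4\sin^2\!\bigl(\tfrac{\theta_1-\theta_2}{4}\bigr)\cos^2\!\bigl(\tfrac{\theta_1-\theta_2}{4}\bigr)$ and canceling the common factor $\sin^2\!\bigl(\tfrac{\theta_1-\theta_2}{4}\bigr)$, I expect to obtain
\[
\rho(\theta_1,\theta_2)=\frac{\cos(\theta_1+\theta_2)\,\cos^2\!\bigl(\tfrac{\theta_1-\theta_2}{4}\bigr)}{\cos^2\!\bigl(\tfrac{\theta_1+\theta_2}{4}\bigr)}\qquad\text{for }\theta_1\neq\theta_2.
\]

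Once this identity is in hand, the convergence $\rho(\theta_1,\theta_2)\to 1$ as $(\theta_1,\theta_2)\to(0,0)$ is immediate, since all three cosines tend to $1$ and the right-hand side is continuous at the origin.

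For the lower bound on $[-\pi,\pi]^2$, note that $\tfrac{\theta_1+\theta_2}{4}\in[-\tfrac{\pi}{2},\tfrac{\pi}{2}]$, so the denominator is nonnegative and vanishes only at the two corners $(\pi,\pi)$ and $(-\pi,-\pi)$. At each such corner $\cos(\theta_1+\theta_2)=\cos(\pm 2\pi)=1$ and $\cos^2\!\bigl(\tfrac{\theta_1-\theta_2}{4}\bigr)=1$, so the numerator stays close to $1$ nearby and $\rho$ in fact tends to $+\infty$ there, which poses no threat to a lower bound. Outside a fixed neighborhood of these two corners the denominator is bounded below by a positive constant while the numerator is bounded in absolute value by $1$, so $\rho$ is uniformly bounded there. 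The diagonal $\theta_1=\theta_2$ is harmless since $\rho\equiv 1$ by definition. The only actual piece of work is the trigonometric simplification; afterwards, both assertions of the lemma follow with essentially no additional effort, and the main (minor) care needed is to verify the sign of $\cos(\theta_1+\theta_2)$ in the regime where the denominator could be small, which as shown is always positive there.
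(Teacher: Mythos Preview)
Your proposal is correct and is essentially the same proof as the paper's: both reduce $\rho$ to the closed form
\[
\rho(\theta_1,\theta_2)=\frac{\cos(\theta_1+\theta_2)\,\cos^2\!\bigl(\tfrac{\theta_1-\theta_2}{4}\bigr)}{\cos^2\!\bigl(\tfrac{\theta_1+\theta_2}{4}\bigr)}
\]
and then read off the limit at the origin and the lower bound from continuity away from the corners $(\pm\pi,\pm\pi)$ together with positivity near those corners. The only cosmetic difference is that the paper introduces the variables $\theta_\pm=(\theta_2\pm\theta_1)/2$ before simplifying, whereas you apply the sum-to-product identities directly; your route is slightly more streamlined but arrives at the identical formula and conclusion.
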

\begin{proof}
    Fix $(\theta_1, \theta_2) \in [-\pi,\pi]^2$ with $\theta_1 \neq \theta_2$ and set $\theta_+ = \frac{\theta_2 + \theta_1}{2}$ and $\theta_- = \frac{\theta_2 - \theta_1}{2}$. We rewrite $\rho(\theta_1,\theta_2)$ in terms of $\theta_+$ and $\theta_-$. Using trigonometric identities, we have
    \begin{equation*}
    	\begin{split}
    		&  - \big(1 - \cos(\theta_1 + \theta_2 ) \big) + \sin^2(\theta_1) + \sin^2(\theta_2) \\
    		& \quad =  \cos(2\theta_+) - \cos^2(\theta_+ - \theta_-) + \sin^2(\theta_+ + \theta_-) \\
    		& \quad = \cos(2\theta_+) - (\cos (\theta_+) \cos (\theta_-) + \sin (\theta_+) \sin (\theta_-))^2 + (\sin (\theta_+) \cos (\theta_-) + \cos (\theta_+) \sin (\theta_-))^2 \\
            & \quad = \cos(2\theta_+) + ( \cos^2 (\theta_+) - \sin^2 (\theta_+) ) ( \sin^2 (\theta_-) - \cos^2 ( \theta_-) )  = (1- \cos(2\theta_-)) \cos(2\theta_+) \\
            & \quad = 2 \sin^2 (\theta_-) \cos(2\theta_+)  = 8 \sin^2 (\theta_-/2) \cos^2 (\theta_-/2) \cos(2\theta_+) \, .
    	\end{split}
    \end{equation*}
    Moreover, using the identity $\sin x - \sin y = 2 \sin \big( \frac{x-y}{2} \big) \cos \big( \frac{x+y}{2} \big)$ we get
    \begin{equation*}
    	2 \Big( \sin\big( \tfrac{1}{2}\theta_2 \big) - \sin\big( \tfrac{1}{2} \theta_1 \big) \Big)^2 = 8 \sin^2 (\theta_-/2) \cos^2 (\theta_+/2) \, .
    \end{equation*}
    Our choice of $\theta_1$ and $\theta_2$ implies that $\sin(\theta_-/2) \neq 0$ and thus, from the equalities above, 
    \begin{equation*}
    	\rho(\theta_1,\theta_2) = \frac{\cos^2(\theta_-/2) \cos(2\theta_+)}{\cos^2(\theta_+/2)} = \frac{\cos^2((\theta_2-\theta_1)/4) \cos(\theta_1+\theta_2)}{\cos^2((\theta_1+\theta_2)/4)} \, .
    \end{equation*}
    From this we easily deduce the claimed limit of $\rho$ at $(0,0)$. To show that $\rho$ is bounded from below, it is enough to show that the function
    \begin{equation*}
    	[-\pi,\pi]^2 \sm \{(-\pi,-\pi),(\pi,\pi)\} \ni (\theta_1, \theta_2) \mapsto \frac{\cos^2((\theta_2-\theta_1)/4) \cos(\theta_1+\theta_2)}{\cos^2((\theta_1+\theta_2)/4)}
    \end{equation*}
    is bounded from below. This holds true since it is continuous on $[-\pi,\pi]^2 \sm \{(-\pi,-\pi),(\pi,\pi)\}$ (its domain) and positive in neighborhoods of the points $(-\pi, -\pi)$ and $(\pi,\pi)$.
\end{proof}

Let us now fix $\Omega \in \A_0$ and $(w_n,z_n) \in L^1_{\mathrm{loc}}(\RR^2;\RR^2)$ such that $H_n(w_n,z_n;\Omega) \leq C$. Let $u_n \in \PC_{\ln}(\SS^1)$ be such that $(w_n,z_n) = T_n(u_n)$, let $\theth_n$ and $\thetv_n$ be as in~\eqref{eq:def of theth}--\eqref{eq:def of thetv}, and let $\rho^\mathrm{hor}_n$ and $\rho^\mathrm{ver}_n$ be defined as in Lemma~\ref{lemma:H is a discrete MM}. 

A first consequence of Lemma~\ref{lemma:rho is close to 1} and of the boundedness of the energy is contained in Lemma \ref{lemma:removing rho}. It implies that, despite $\rho^\mathrm{hor}_n$ and $\rho^\mathrm{ver}_n$ might attain negative values, a sequence $(w_n,z_n)$ with equibounded $H_{n}$ energy has an equibounded discrete Modica-Mortola-type energy: 
\begin{align}
   \frac{1}{ 2\en} \ln^2 \hspace{-1em} \sum_{(i,j) \in \I^n(\Omega)} \hspace{-1em}   W(w_n^{i,j})  + W(w_n^{i+1,j}) + \en \ln^2 \hspace{-1em}  \sum_{(i,j) \in \I^n(\Omega)} \hspace{-1em}   \big| \de^{\mathrm{d}}_{1} w_n^{i,j} \big|^2 & \leq C \, , \label{eq:discrete MM bound 1} \\
    \frac{1}{ 2\en} \ln^2 \hspace{-1em} \sum_{(i,j) \in \I^n(\Omega)} \hspace{-1em}   W(z_n^{i,j})  + W(z_n^{i,j+1}) + \en \ln^2 \hspace{-1em}  \sum_{(i,j) \in \I^n(\Omega)} \hspace{-1em}  \big| \de^{\mathrm{d}}_{2} z_n^{i,j} \big|^2 & \leq C \label{eq:discrete MM bound 2} \, .
\end{align}  

\begin{lemma} \label{lemma:removing rho}
    Assume that $(w_n,z_n) \in L^1_{\mathrm{loc}}(\RR^2;\RR^2)$ is such that $H_n(w_n,z_n;\Omega) \leq C$. Then there exists a constant $C > 0$ such that 
    \begin{align}
     \en \ln^2 \hspace{-1em}  \sum_{(i,j) \in \I^n(\Omega)} \hspace{-1em}  (\rho^\mathrm{hor}_n)^{i,j} \big| \de^{\mathrm{d}}_{1} w_n^{i,j} \big|^2  & \geq   \frac{1}{2}\en \ln^2 \hspace{-1em}  \sum_{(i,j) \in \I^n(\Omega)} \hspace{-1em}  \big| \de^{\mathrm{d}}_{1} w_n^{i,j} \big|^2 - C  \, , \label{eq:bounding from below with gradient 1} \\
     \en \ln^2 \hspace{-1em}  \sum_{(i,j) \in \I^n(\Omega)} \hspace{-1em}  (\rho^\mathrm{ver}_n)^{i,j} \big| \de^{\mathrm{d}}_{2} z_n^{i,j} \big|^2  & \geq   \frac{1}{2}\en \ln^2 \hspace{-1em}  \sum_{(i,j) \in \I^n(\Omega)} \hspace{-1em}  \big| \de^{\mathrm{d}}_{2} z_n^{i,j} \big|^2 - C  \label{eq:bounding from below with gradient 2} \, .  
    \end{align}
\end{lemma}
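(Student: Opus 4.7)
The strategy is to split the index set $\I^n(\Omega)$ into a ``good'' subset $G_n$ on which $\rho^{\mathrm{hor}}_n \geq 1/2$ pointwise, and a ``bad'' complement $B_n$; on $G_n$ the estimate \eqref{eq:bounding from below with gradient 1} holds pointwise, while on $B_n$ the sum of $|\de^{\mathrm{d}}_1 w_n|^2$ is controlled by the potential part of $\Hh_n$, which in turn is controlled via $H_n \leq C$ and the sum-of-squares structure of $\Hh_n$. The estimate \eqref{eq:bounding from below with gradient 2} follows from the same argument after interchanging horizontal and vertical.

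By Lemma~\ref{lemma:rho is close to 1} fix $\eta \in (0,\pi/6]$ so that $\rho \geq 1/2$ on $[-\eta,\eta]^2$ and let $M \geq 0$ satisfy $\rho \geq -M$ on $[-\pi,\pi]^2$. Set $G_n := \{(i,j) \in \I^n(\Omega) : |(\theth_n)^{i,j}|, |(\theth_n)^{i+1,j}| \leq \eta\}$ and $B_n := \I^n(\Omega) \sm G_n$. Since $(\rho^{\mathrm{hor}}_n)^{i,j}|\de^{\mathrm{d}}_1 w_n^{i,j}|^2 \geq \frac{1}{2}|\de^{\mathrm{d}}_1 w_n^{i,j}|^2$ on $G_n$ by the choice of $\eta$, the whole task reduces to bounding $\en \ln^2 \sum_{B_n} |\de^{\mathrm{d}}_1 w_n|^2$ by a constant depending only on $C$, $\eta$, $M$.

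On $B_n$, the relation $w_n = \sqrt{2/\dn}\sin(\theth_n/2)$ and the fact that $|(\theth_n)^{i,j}| > \eta$ or $|(\theth_n)^{i+1,j}| > \eta$ force one of $(w_n^{i,j})^2, (w_n^{i+1,j})^2$ to be at least $2\sin^2(\eta/2)/\dn$, hence $W(w_n^{i,j}) + W(w_n^{i+1,j}) \geq c_1/\dn^2$ with $c_1 = \sin^4(\eta/2)$ for $n$ large; combined with the trivial bound $|w_n^{i+1,j} - w_n^{i,j}|^2 \leq 8/\dn$ (from $|w_n| \leq \sqrt{2/\dn}$) and the identity $\en\dn = \ln^2/(2\en)$, summing over $B_n$ yields
\[
\en\ln^2 \sum_{B_n}|\de^{\mathrm{d}}_1 w_n^{i,j}|^2 \leq \frac{8}{c_1}\, P_n \, ,
\]
where $P_n := \frac{\ln^2}{2\en}\sum_{\I^n(\Omega)}(W(w_n^{i,j}) + W(w_n^{i+1,j}))$ is the potential part of $\Hh_n$ in the decomposition of Lemma~\ref{lemma:H is a discrete MM}.

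The remaining, and principal, step is to prove a uniform bound $P_n \leq C'$. The key input is that each summand of $\Hh_n$ is a nonnegative multiple of $|u_n^{i+2,j} - \frac{\alpha_n}{2}u_n^{i+1,j}+u_n^{i,j}|^2$, which via Lemma~\ref{lemma:H is a discrete MM} becomes the pointwise inequality
\[
\frac{\ln^2}{2\en}\bigl(W(w_n^{i,j})+W(w_n^{i+1,j})\bigr) + \en\ln^2(\rho^{\mathrm{hor}}_n)^{i,j}|\de^{\mathrm{d}}_1 w_n^{i,j}|^2 \geq 0 \, .
\]
Combined with the previous displayed bound this produces a closed-loop inequality for $P_n$. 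The main obstacle I anticipate is precisely the closure of this loop: the naive combination with the global bound $\rho \geq -M$ gives a coefficient of $P_n$ on the right-hand side that may exceed $1$, so one must refine the pointwise inequality on $\{(\rho^{\mathrm{hor}}_n)^{i,j} < 0\}$ by showing that the ratio $\dn(W(w_n^{i,j})+W(w_n^{i+1,j}))/(|(\rho^{\mathrm{hor}}_n)^{i,j}|\,(w_n^{i+1,j}-w_n^{i,j})^2)$ is bounded below by a constant strictly greater than $1$ uniformly in $n$ (a computation, using the explicit formula for $\rho$ from the proof of Lemma~\ref{lemma:rho is close to 1} and the asymptotics $\dn \to 0$, shows this ratio tends to $2$ in the worst case, attained near $\theth_n^{i,j}=0,\ \theth_n^{i+1,j}\to\pm\pi$). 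Once $P_n \leq C'$ is established, \eqref{eq:bounding from below with gradient 1} follows from the inequality $(\rho^{\mathrm{hor}}_n - 1/2)|\de^{\mathrm{d}}_1 w_n|^2 \geq -(M+1/2)|\de^{\mathrm{d}}_1 w_n|^2$ on $B_n$ and the previous display, with final constant $C = (M+1/2)(8/c_1)C'$.
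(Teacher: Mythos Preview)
Your good/bad split according to whether both adjacent angles lie in $[-\eta,\eta]$ is exactly the paper's opening move, and your pointwise bound $\en\ln^2\sum_{B_n}|\de^{\mathrm{d}}_1 w_n|^2 \leq (8/c_1) P_n$ is correct. The divergence from the paper is in how you bound the bad contribution: you try to bound it by the potential term $P_n$ and then close a self-referential inequality for $P_n$, whereas the paper bypasses $P_n$ entirely and instead \emph{counts} the bad indices directly from the energy bound at the level of the spin field. Concretely, from $|u^{i+2,j}-\tfrac{\alpha_n}{2}u^{i+1,j}+u^{i,j}| \geq \big||u^{i,j}-\tfrac{\alpha_n}{2}u^{i+1,j}|-1\big|$ one extracts a summand-wise lower bound proportional to $(1-\dn-\cos(\theth_n)^{i,j})^2$, and since on $B_n$ one of the angles has modulus $\geq \sigma$, each bad index contributes at least a fixed positive amount; the energy bound $H_n \leq C$ then forces $\#B_n \leq C\dn^{3/2}/\ln$. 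Combining this count with the crude estimate $|\de^{\mathrm{d}}_1 w_n|^2 \leq 8/(\dn\ln^2)$ gives $\en\ln^2\sum_{B_n}|\de^{\mathrm{d}}_1 w_n|^2 \leq C\en\ln^2\cdot(\dn^{3/2}/\ln)\cdot 8/(\dn\ln^2) = C$, and the lemma follows without ever needing to bound $P_n$ separately.

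The gap in your route is exactly the one you flag: the loop closure for $P_n$ hinges on the claim that the ratio $\dn(W(w_n^{i,j})+W(w_n^{i+1,j}))/\big(|(\rho^{\mathrm{hor}}_n)^{i,j}|(w_n^{i+1,j}-w_n^{i,j})^2\big)$ is bounded below by some $1+\epsilon$ uniformly on $\{\rho<0\}$ and in $n$. The pointwise nonnegativity of each $\Hh_n$-summand only gives ratio $\geq 1$, with equality characterised by the ground-state configuration (which lies in $\{\rho>0\}$), so your claim is plausible; and your test case $(\theta_1,\theta_2)=(0,\pi)$ giving limit $2$ is correct. But turning this into a uniform lower bound $>1$ over the compact set $\{\rho \leq 0\}\subset[-\pi,\pi]^2$ and over all small $\dn$ requires either an explicit optimisation or a continuity-plus-compactness argument that you have not carried out. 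The paper's counting argument avoids this computation altogether and is the simpler path.
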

\begin{proof}
    We prove here only~\eqref{eq:bounding from below with gradient 1}, since \eqref{eq:bounding from below with gradient 2} can be proved similarly.

    \ul{Step 1}: Let us count the number of points $(i,j) \in \I^n(\Omega)$ such that $(\rho^\mathrm{hor}_n)^{i,j} < \frac{1}{2}$. By Lemma~\ref{lemma:rho is close to 1}, there exists a $\sigma \in (0,\pi)$ such that 
    \begin{equation*}
        \text{if } \ \big|(\theth_n)^{i,j}\big|, \big|(\theth_n)^{i+1,j}\big| < \sigma \  \text{ then }  \ (\rho^\mathrm{hor}_n)^{i,j} \geq \tfrac{1}{2} \, .
    \end{equation*}
    Therefore, if $(\rho^\mathrm{hor}_n)^{i,j} < \tfrac{1}{2}$ then either $\big|(\theth_n)^{i,j}\big| \geq \sigma$ or  $ \big|(\theth_n)^{i+1,j}\big| \geq \sigma$. It follows that it is enough to estimate the number of points $(i,j) \in \I^n(\Omega)$ such that $\big|(\theth_n)^{i,j}\big| \geq \sigma$ and those such that  $ \big|(\theth_n)^{i+1,j}\big| \geq \sigma$. In order to do so, we 
   exploit the inequality 
    \begin{equation*}
        \bigg( \Big| u^{i,j}  - \frac{\alpha_n}{2} u^{i+1,j} \Big|  + 1 \bigg)^{\! 2} \leq   \bigg( 2 + \frac{\alpha_n}{2}  \bigg)^{\! 2}
    \end{equation*}
    and the bound $H_n(u_n;\Omega) = H_n(w_n,z_n;\Omega) \leq C$ to deduce that, for $n$ large enough, 
    \begin{equation*}
        \begin{split}
            C \frac{\dn^{3/2}}{\ln} & \geq  \hspace{-1em}   \sum_{(i,j) \in \I^n(\Omega)} \Big| u^{i+2,j} - \frac{\alpha_n}{2} u^{i+1,j}  + u^{i,j} \Big|^2 \\
            & \geq  \hspace{-1em}  \sum_{(i,j) \in \I^n(\Omega)} \bigg( \Big| u^{i,j}  - \frac{\alpha_n}{2} u^{i+1,j} \Big| - 1 \bigg)^{\! 2} \\
            & \geq \frac{1}{\big( 2 + \frac{\alpha_n}{2}  \big)^2}  \sum_{(i,j) \in \I^n(\Omega)} \bigg( \Big| u^{i,j}  - \frac{\alpha_n}{2} u^{i+1,j} \Big|^2 - 1 \bigg)^{\! 2} \\
            & = \frac{\alpha_n^2 }{\big( 2 + \frac{\alpha_n}{2}  \big)^2}  \sum_{(i,j) \in \I^n(\Omega)} \bigg( \frac{\alpha_n}{4} -  u^{i,j} \cdot u^{i+1,j}  \bigg)^{\! 2} \\
            & \geq \frac{9}{16}   \sum_{(i,j) \in \I^n(\Omega)}  \Big( 1-\dn -  \cos\big( (\theth_n)^{i,j} \big)  \Big)^{\! 2}.
        \end{split}
    \end{equation*}
    Analogously, we obtain the bound 
    \begin{equation*}
        \sum_{(i,j) \in \I^n(\Omega)}  \Big( 1-\dn - \cos\big( (\theth_n)^{i+1,j} \big)  \Big)^{\! 2} \leq  C \frac{\dn^{3/2}}{\ln} \, .
    \end{equation*}
    As a consequence, for $n$ large enough,
    \begin{equation} \label{eq:counting theta}
        \begin{split}   
            C \frac{\dn^{3/2}}{\ln} & \geq \sum_{\substack{(i,j) \in \I^n(\Omega) \\  |(\theth_n)^{i,j}| \geq \sigma}}  \Big( 1-\dn -  \cos\big( (\theth_n)^{i,j} \big)  \Big)^{\! 2}  \geq \sum_{\substack{(i,j) \in \I^n(\Omega) \\ |(\theth_n)^{i,j}| \geq \sigma}}  \Big( 1-\dn -  \cos \sigma  \Big)^{\! 2} \\
            & \geq \# \big\{ (i,j) \in \I^n(\Omega)  \ : \  \big|(\theth_n)^{i,j}\big| \geq \sigma \big\} \frac{1}{4} \big(1 - \cos \sigma\big)^2.
        \end{split}
    \end{equation}
    Analogously, we infer that 
    \begin{equation*}
        \# \big\{ (i,j) \in \I^n(\Omega)  \ : \  \big|(\theth_n)^{i+1,j}\big| \geq \sigma \big\} \leq C \frac{\dn^{3/2}}{\ln}  \, .
    \end{equation*}

    In conclusion, 
    \begin{equation} \label{eq:number of bad rhos}
        \# \big\{ (i,j) \in \I^n(\Omega)  \ : \  \big|(\rho^\mathrm{hor}_n)^{i,j}\big| < \tfrac{1}{2} \big\} \leq C \frac{\dn^{3/2}}{\ln} \, .
    \end{equation}

    \ul{Step 2}: We split the right-hand side in~\eqref{eq:bounding from below with gradient 1} as follows
    \begin{equation*}
        \begin{split}
            & \frac{1}{2}\en \ln^2 \hspace{-1em}  \sum_{(i,j) \in \I^n(\Omega)} \hspace{-1em}  \big| \de^{\mathrm{d}}_{1} w_n^{i,j} \big|^2 \\
            &  \leq  \en \ln^2 \hspace{-1em}  \sum_{\substack{(i,j) \in \I^n(\Omega)\\ (\rho^\mathrm{hor}_n)^{i,j} \geq \frac{1}{2} }} \hspace{-1em} (\rho^\mathrm{hor}_n)^{i,j} \big| \de^{\mathrm{d}}_{1} w_n^{i,j} \big|^2  +  \frac{1}{2} \en \ln^2 \hspace{-1em}  \sum_{\substack{(i,j) \in \I^n(\Omega)\\ (\rho^\mathrm{hor}_n)^{i,j} < \frac{1}{2} }} \hspace{-1em}  \big| \de^{\mathrm{d}}_{1} w_n^{i,j} \big|^2   \\
            & = \en \ln^2 \hspace{-1em}  \sum_{ (i,j) \in \I^n(\Omega)} \hspace{-1em} (\rho^\mathrm{hor}_n)^{i,j} \big| \de^{\mathrm{d}}_{1} w_n^{i,j} \big|^2  +  \frac{1}{2} \en \ln^2 \hspace{-1em}  \sum_{\substack{(i,j) \in \I^n(\Omega)\\ (\rho^\mathrm{hor}_n)^{i,j} < \frac{1}{2} }} \hspace{-1em}  (1 - 2 (\rho^\mathrm{hor}_n)^{i,j}) \big| \de^{\mathrm{d}}_{1} w_n^{i,j} \big|^2.
        \end{split}
    \end{equation*}
 From the definition of $w_n$, cf.~\eqref{eq:order parameter}, we get $|w_n^{i,j}| \leq \sqrt{\tfrac{2}{\dn}}$ and thus $| \de_1^\mathrm{d} w_n^{i,j}|^2 \leq \tfrac{8}{\dn \ln^2}$. Together with the fact that $(\rho_n^\mathrm{hor})^{i,j}$ is bounded from below (Lemma~\ref{lemma:rho is close to 1}) and~\eqref{eq:number of bad rhos}, this yields 
 \begin{equation*}
    \frac{1}{2} \en \ln^2 \hspace{-1em}  \sum_{\substack{(i,j) \in \I^n(\Omega)\\ (\rho^\mathrm{hor}_n)^{i,j} < \frac{1}{2} }} \hspace{-1em}  (1 - 2 (\rho^\mathrm{hor}_n)^{i,j}) \big| \de^{\mathrm{d}}_{1} w_n^{i,j} \big|^2 \leq C \en \ln^2 \frac{\dn^{3/2}}{\ln} \frac{1}{\dn \ln^2} = C 
 \end{equation*}
and concludes the proof of~\eqref{eq:bounding from below with gradient 1}.
\end{proof}

Our next goal, which is at the core of the compactness result, is to prove that it is possible to obtain a bound for the pair $(w_n,z_n)$ in terms of a full discrete Modica-Mortola functional, namely, a Modica-Mortola functional where the regularization term features the whole discrete gradients of $w_n$ and $z_n$ instead of the sole partial discrete derivatives~$\de_1^\mathrm{d} w_n$ and~$\de_2^\mathrm{d} z_n$. As suggested by the heuristics presented in the introduction, the condition $(w_n,z_n) \in T_n(\PC_{\ln}(\SS^1))$ implies that the pair $(w_n,z_n)$ can be approximately written as the discrete gradient of a suitable potential~$\varphi_n$ which is obtained as a scaled angular lifting of $u_n$. The partial derivatives $\de_2^\mathrm{d} w_n$ and~$\de_1^\mathrm{d} z_n$ (whose control is as yet missing) are then related to the mixed second derivative $\de_{12}^\mathrm{d} \varphi_n$, whose $L^2$ norm can be controlled by that of the pure derivatives  $\de_{11}^\mathrm{d} \varphi_n$ and~$\de_{22}^\mathrm{d} \varphi_n$.

\begin{proposition} \label{prop:full MM bound}
    Let $(w_n,z_n)$ be as in Lemma~\ref{lemma:removing rho}. Let $Q \subcc \Omega$ be an open square with sides parallel to the coordinate axes. Then there exists a constant $C > 0$ (depending on $Q$) such that 
    \begin{align}   
        \frac{1}{ \en}    \integral{Q}{W(w_n )}{\d x}  + \en \integral{Q}{ \big| \nabla^{\mathrm{d}} w_n  \big|^2}{\d x} & \leq C \, , \label{eq:discrete full MM bound 1} \\
        \frac{1}{ \en}   \integral{Q}{W(z_n )}{\d x}  + \en \integral{Q}{ \big| \nabla^{\mathrm{d}} z_n  \big|^2}{\d x} & \leq C \label{eq:discrete full MM bound 2} \, .
    \end{align}
    Moreover 
    \begin{equation} \label{eq:curl to 0} 
        \curl(w_n,z_n) \weak 0  \quad \text{ in } \mathcal{D}'(Q) \, .     
    \end{equation}
\end{proposition}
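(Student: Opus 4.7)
The plan is to realize $(w_n,z_n)$ as the approximate discrete gradient of an angular lifting $\varphi_n$ of $u_n$ on a slightly enlarged square $Q'$ with $Q \subcc Q' \subcc \Omega$, and to exploit the $L^2$ Ornstein inequality --- which, in contrast to its $L^1$ counterpart, does hold --- to control the mixed discrete second derivative $\de^\mathrm{d}_{12}\varphi_n$ by the pure ones $\de^\mathrm{d}_{11}\varphi_n, \de^\mathrm{d}_{22}\varphi_n$. These pure second derivatives are already bounded through Lemma~\ref{lemma:removing rho} combined with~\eqref{eq:discrete MM bound 1}--\eqref{eq:discrete MM bound 2}.

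First I would build a lifting $\psi_n \colon \ln\ZZ^2 \cap Q' \to \RR$ with $u_n = (\cos\psi_n,\sin\psi_n)$ and prescribed increments $\psi_n^{i+1,j}-\psi_n^{i,j} = (\theth_n)^{i,j}$, $\psi_n^{i,j+1}-\psi_n^{i,j} = (\thetv_n)^{i,j}$. Such a lifting may fail globally because of topological defects, but the sum of the oriented angles around any unit plaquette is an integer multiple of $2\pi$ which vanishes whenever the four participating angles have modulus less than $\pi/2$; by~\eqref{eq:counting theta} and its vertical counterpart the set of plaquettes where this fails has cardinality at most $C\dn^{3/2}/\ln$. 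Replacing $u_n$ in those cells by a suitable interpolation changes $H_n$ only negligibly and allows the consistent definition of $\psi_n$ on the simply connected $Q'$. Setting $\varphi_n := \en\psi_n$ yields $\de^\mathrm{d}_1\varphi_n = (\theth_n)^{i,j}/\sqrt{2\dn}$, and the Taylor expansion $w_n = \sqrt{2/\dn}\sin(\tfrac12\theth_n) = \theth_n/\sqrt{2\dn} + O(\dn w_n^3)$ gives $\de^\mathrm{d}_1\varphi_n = w_n + r_n^{(1)}$ (and analogously $\de^\mathrm{d}_2\varphi_n = z_n + r_n^{(2)}$), with $\|r_n^{(i)}\|_{L^2(Q')} \to 0$ thanks to the $L^4$ bound on $w_n, z_n$ inherited from~\eqref{eq:discrete MM bound 1}--\eqref{eq:discrete MM bound 2} via $w_n^4 \leq W(w_n) + 2w_n^2$. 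Differentiating once more and controlling the discrete derivatives of the error using $\dn w_n^2 \leq 2$ yields $\en\|\de^\mathrm{d}_{ii}\varphi_n\|^2_{L^2(Q')} \leq C$ for $i=1,2$. Now pick $\chi \in C^\infty_c(Q')$ equal to $1$ on a neighborhood of $Q$ and renormalize $\varphi_n$ to have zero mean on $Q'$, so that Poincar\'e gives $\|\varphi_n\|_{L^2(Q')} \leq C$. Applying the discrete $L^2$ Ornstein identity
\begin{equation*}
  \|\de^\mathrm{d}_{12} f\|^2_{L^2(\RR^2)} \leq \tfrac12\big(\|\de^\mathrm{d}_{11} f\|^2_{L^2(\RR^2)} + \|\de^\mathrm{d}_{22} f\|^2_{L^2(\RR^2)}\big) \quad \text{for compactly supported } f
\end{equation*}
(a one-line Plancherel consequence of $|a|^2|b|^2 \leq \tfrac12(|a|^4+|b|^4)$ with the Fourier symbols $a = e^{i\ln\xi_1}-1$, $b = e^{i\ln\xi_2}-1$) to $\chi\varphi_n$, and absorbing the discrete Leibniz corrections via the $L^2$ bounds on $\varphi_n$ and $\nabla^\mathrm{d}\varphi_n$ just obtained, I get $\en\|\de^\mathrm{d}_{12}\varphi_n\|^2_{L^2(Q)} \leq C$.

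Finally, the identities $\de^\mathrm{d}_2 w_n = \de^\mathrm{d}_{21}\varphi_n - \de^\mathrm{d}_2 r_n^{(1)}$ and $\de^\mathrm{d}_1 z_n = \de^\mathrm{d}_{12}\varphi_n - \de^\mathrm{d}_1 r_n^{(2)}$, combined with the commutativity $\de^\mathrm{d}_{12} = \de^\mathrm{d}_{21}$, supply the missing $L^2$-bounds on $\de^\mathrm{d}_2 w_n$ and $\de^\mathrm{d}_1 z_n$ and therefore~\eqref{eq:discrete full MM bound 1}--\eqref{eq:discrete full MM bound 2}. The same identities give $\de^\mathrm{d}_1 z_n - \de^\mathrm{d}_2 w_n = \de^\mathrm{d}_1 r_n^{(2)} - \de^\mathrm{d}_2 r_n^{(1)}$, and testing against $\xi \in C^\infty_c(Q)$ by discrete integration by parts together with $\|r_n^{(i)}\|_{L^2(Q')} \to 0$ yields~\eqref{eq:curl to 0}. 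The hard part will be the lifting step: quantifying rigorously the cost of modifying $u_n$ on the plaquettes carrying topological defects without damaging the a priori bound on $H_n$, so that all subsequent angle-based manipulations can proceed on the simply connected $Q'$.
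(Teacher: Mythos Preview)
Your high-level strategy --- build an angular lifting $\varphi_n$ and use the discrete $L^2$ identity $\|\de^{\mathrm d}_{12}f\|^2 \leq \tfrac12(\|\de^{\mathrm d}_{11}f\|^2+\|\de^{\mathrm d}_{22}f\|^2)$ after multiplying by a cutoff --- is exactly the paper's. The substantive difference is in how the defects are handled, and this is where your proposal has a real gap.

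You propose to \emph{modify} $u_n$ on the $O(\dn^{3/2}/\ln)$ plaquettes carrying large angles so as to obtain a globally consistent lifting. But local surgery does not obviously kill all vorticity: changing the spin at a vertex alters four incident angles, and may push a neighbouring angle above $\pi/2$ or create nonzero circulation on an adjacent plaquette. There is no evident reason why such a process terminates with only $O(\dn^{3/2}/\ln)$ modifications, and without that the whole argument collapses (the bounds you derive would be for a modified $(w_n',z_n')$ that might differ from $(w_n,z_n)$ on far too many cells). The paper sidesteps this entirely: it keeps $u_n$ untouched and defines $\varphi_n$ by summing angles along a fixed L-shaped path, so that $\de^{\mathrm d}_2\varphi_n = (\thetv_n)/\arccos(1-\dn)$ \emph{exactly} while $\de^{\mathrm d}_1\varphi_n$ differs from the normalized horizontal angle $\tilde w_n$ by a quantity that jumps only across plaquettes with nonzero vorticity. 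The vorticity count then gives $\|\de^{\mathrm d}_1\varphi_n-\tilde w_n\|_{L^1}\to 0$, $\en\|\de^{\mathrm d}_1\varphi_n-\tilde w_n\|^2_{L^2}\to 0$, $\en\|\de^{\mathrm d}_2(\de^{\mathrm d}_1\varphi_n-\tilde w_n)\|^2_{L^2}\leq C$, and $\|\de^{\mathrm d}_{12}(\de^{\mathrm d}_1\varphi_n-\tilde w_n)\|_{L^1}\leq C\dn/\ln$; these replace your ``$r_n$ small'' claims and feed into the Leibniz remainders from the cutoff.

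A second, smaller gap: your passage from $\en\|\de^{\mathrm d}_1 w_n\|^2\leq C$ to $\en\|\de^{\mathrm d}_{11}\varphi_n\|^2\leq C$ is not free. Since $\de^{\mathrm d}_1\varphi_n$ is expressed through $\arcsin(\sqrt{\dn/2}\,w_n)$, the implicit Lipschitz constant blows up where $|w_n|$ is near $\sqrt{2/\dn}$; the paper handles this (Lemma~\ref{lemma:all estimates}, Step~4) by separately counting the $O(\dn^{3/2}/\ln)$ cells where $|\thetv_n|\geq\pi/3$ and using the crude bound there, an argument you will also need for $\de^{\mathrm d}_{22}\varphi_n$.
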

\begin{proof}
    To prove the statement, we have to show that 
    \begin{equation} \label{eq:2103191338}
        \en \|\de_2^\mathrm{d} w_n \|_{L^2(Q)}^2 \leq C \, , \quad \en \|\de_1^\mathrm{d} z_n \|_{L^2(Q)}^2 \leq C \, .
    \end{equation}

We first control $\en \|\de_1^\mathrm{d} z_n\|_{L^2(Q)}^2$.  Let us fix two additional open   squares $Q'$ and $Q''$ with sides parallel to the coordinate axes such that $Q \subcc Q' \subcc Q'' \subcc \Omega$. We fix $(i_0,j_0) \in \ZZ^2$ (actually depending also on $n$) such that~$i_0$ and~$j_0$ are the smallest integer indices such that $(\ln i_0, \ln j_0) \in Q''$. Then for every $(i, j) \in \ZZ^2$  such that $(\ln i, \ln j) \in Q''$ we define
    \begin{equation} \label{eq:def of phi}
        \varphi_n^{i,j} := \frac{1}{\arccos(1-\dn)} \bigg( \sum_{h=i_0}^{i-1} \ln (\theth_n)^{h,j_0} + \sum_{k=j_0}^{j-1} \ln (\thetv_n)^{i, k} \bigg) - a^n_{Q'} \, ,
    \end{equation}
  where $a^n_{Q'} \in \RR$. The particular value of $a^n_{Q'}$ will be relevant only in Step~3 of Lemma~\ref{lemma:all estimates} below ($a^n_{Q'}$ is chosen so that  the piecewise affine interpolation of $\varphi_n$ has zero average in the square $Q'$; this choice will be clear later). The function $\varphi_n$ satisfies 
  \begin{equation} \label{eq:de 2 phi and thetv}
      \de_2^\mathrm{d} \varphi_n^{i,j} = \frac{1}{\arccos(1-\dn)}  (\thetv_n)^{i,j}  .
  \end{equation}
  Therefore, from~\eqref{eq:order parameter} it follows that 
  \begin{equation} \label{eq:relation between z and de 2 phi}
    z_n^{i,j} = \sqrt{\frac{2}{\dn}} \sin \Big( \frac{1}{2} \arccos(1-\dn) \de_2^\mathrm{d} \varphi_n^{i,j} \Big)  \, .
\end{equation}
By definition of $(\thetv_n)^{i,j}$ we have that $(\thetv_n)^{i,j} \in [-\pi,\pi)$. Thus  $\tfrac{1}{2} \arccos(1-\dn) \de_2^\mathrm{d} \varphi_n^{i,j}  \in \big[-\tfrac{\pi}{2}, \tfrac{\pi}{2}\big)$ and we can invert the formula above, getting
\begin{equation} \label{eq:de 2 phi with arcsin} 
    \de_2^\mathrm{d} \varphi_n^{i,j} =  \frac{2}{\arccos(1-\dn)} \arcsin \Big( \sqrt{\frac{\dn}{2}} z_n^{i,j} \Big) \, .
\end{equation}

    We claim that 
    \begin{equation} \label{eq:claim on control on mixed derivative}
        \en \|\de_{12}^\mathrm{d} \varphi_n \|_{L^2(Q)}^2 \leq C \, .
    \end{equation}
    To prove this, we resort to a discrete integration by parts formula that allows us to control~$\de_{12}^\mathrm{d} \varphi_n$ in terms of the pure second discrete derivatives. To make the argument rigorous, let us fix a cut-off function $\zeta \in C^\infty_c(Q')$ such that $0 \leq \zeta \leq 1$ and $ \zeta \equiv 1 $ on a neighborhood of~$\ol Q$. We introduce the piecewise constant function defined by $\zeta_n^{i,j} := \zeta(\ln i, \ln j)$.  For $n$ large enough we have
    \begin{equation} \label{eq:control on mixed derivative}
        \begin{split}
            & \|\de_{12}^\mathrm{d} \varphi_n\|^2_{L^2(Q)} \\
            & \leq \sum_{(i,j) \in \ZZ^2} \ln^2 \Big|\de_{12}^\mathrm{d}\big( \zeta_n \varphi_n \big)^{i,j} \Big|^2 = \sum_{(i,j) \in \ZZ^2} \ln^2 \de_{12}^\mathrm{d}\big( \zeta_n \varphi_n \big)^{i,j}  \cdot \de_{12}^\mathrm{d}\big( \zeta_n \varphi_n \big)^{i,j}  \\
            & = - \hspace{-0.7em}\sum_{(i,j) \in \ZZ^2} \ln^2 \de_{2}^\mathrm{d}\big( \zeta_n \varphi_n \big)^{i+1,j} \cdot \de_{112}^\mathrm{d}\big( \zeta_n \varphi_n \big)^{i,j} \\
            & = \sum_{(i,j) \in \ZZ^2} \ln^2 \de_{22}^\mathrm{d}\big( \zeta_n \varphi_n \big)^{i+1,j}  \cdot \de_{11}^\mathrm{d}\big( \zeta_n \varphi_n \big)^{i,j+1}  .
        \end{split} 
    \end{equation}
In Lemma~\ref{lemma:all estimates} below we deduce the following bounds 
\begin{gather} 
     \en \|\varphi_n\|_{L^2(Q')}^2 + \en \| \nabla^\mathrm{d} \varphi_n \|_{L^2(Q'')}^2  \to 0  \label{eq2:bounds lot}\, , \\
     \en \| \de_{22}^\mathrm{d} \varphi_n \|_{L^2(Q'')}^2   \leq C  \, , \label{eq2:bound de 22}  
\end{gather}
that will be useful to continue the estimate in~\eqref{eq:control on mixed derivative}, since these terms  appear as a result of a (discrete) product rule applied to $\zeta_n \varphi_n$. 
A major difficulty in the proof arises in the bound on $\de_{11}^\mathrm{d} \varphi_n$, whose $L^2$ norm we cannot control.
    This is due to the fact that, in contrast to~\eqref{eq:de 2 phi with arcsin},  $\de_1^\mathrm{d} \varphi_n^{i,j}$ might not be equal to $\tfrac{2}{\arccos(1-\dn)} \arcsin ( \sqrt{\tfrac{\dn}{2}} w_n^{i,j})$. For this reason it is convenient to introduce the auxiliary variable
\begin{equation}  \label{eq:def of tilde w}
    \tilde w_n^{i,j} :=  \frac{2}{\arccos(1-\dn)} \arcsin \Big( \sqrt{\frac{\dn}{2}} w_n^{i,j} \Big) = \frac{1}{\arccos(1-\dn)} (\theth_n)^{i,j}   .
\end{equation}
 Notice that the second equality above is a consequence of the definition of $w_n^{i,j}$ in~\eqref{eq:order parameter} and of the fact that $\frac{1}{2} (\theth_n)^{i,j} \in [-\frac{\pi}{2}, \frac{\pi}{2})$.  In Lemma~\ref{lemma:all estimates} below we deduce the convergences 
 \begin{equation} \label{eq2:de 1 phi and tilde w are close}
    \|\de_1^\mathrm{d} \varphi_n - \tilde w_n\|_{L^1(Q'')} \to 0 \, , \quad \en \|\de_1^\mathrm{d} \varphi_n - \tilde w_n\|^2_{L^2(Q'')} \to 0 \,         
\end{equation}
 and we prove the estimate
 \begin{equation} \label{eq2:bound de1 w}
     \en \|\de_1^\mathrm{d} \tilde w_n\|_{L^2(Q'')} \leq C \, .
 \end{equation}

    With these bounds at hand, we can go further with the estimate in~\eqref{eq:control on mixed derivative}. By a discrete product rule we get 
    \begin{equation*}
        \begin{split}
        \|\de_{12}^\mathrm{d} \varphi_n\|^2_{L^2(Q)} &  \leq \sum_{(i,j) \in \ZZ^2} \ln^2 \de_{22}^\mathrm{d}\big( \zeta_n \varphi_n \big)^{i+1,j}   \cdot \Big( \de_{11}^\mathrm{d} \zeta_n^{i,j+1} \varphi_n^{i+2,j+1} \\[-1em]
        & \hphantom{\sum_{(i,j) \in \ZZ^2} \ln^2} + 2 \de_1^\mathrm{d} \zeta_n^{i,j+1} \de_1^\mathrm{d} \varphi_n^{i+1,j+1} + \zeta_n^{i,j+1} \de_{11}^\mathrm{d} \varphi_n^{i,j+1} \Big) \\
        & \leq \sum_{(i,j) \in \ZZ^2} \ln^2 \de_{22}^\mathrm{d}\big( \zeta_n \varphi_n \big)^{i+1,j} \cdot \Big( \de_{11}^\mathrm{d} \zeta_n^{i,j+1} \varphi_n^{i+2,j+1} \\[-1em]
        & \hphantom{\sum_{(i,j) \in \ZZ^2} \ln^2} + 2 \de_1^\mathrm{d} \zeta_n^{i,j+1} \de_1^\mathrm{d} \varphi_n^{i+1,j+1} + \zeta_n^{i,j+1} \de_{1}^\mathrm{d} \tilde w_n^{i,j+1} \Big) \\
        & \hphantom{\leq} +  \sum_{(i,j) \in \ZZ^2} \ln^2 \de_{22}^\mathrm{d}\big( \zeta_n \varphi_n \big)^{i+1,j}   \cdot \Big( \zeta_n^{i,j+1} \de_{1}^\mathrm{d} \big( \de_{1}^\mathrm{d} \varphi_n - \tilde w_n \big)^{i,j+1} \Big)  \\
        & \leq C  \|\de_{22}^\mathrm{d}\big( \zeta_n \varphi_n \big) \|_{L^2(Q')} \Big( \|\varphi_n  \|_{L^2(Q')} +  \|\de_1^\mathrm{d}\varphi_n  \|_{L^2(Q'')} + \|\de_1^\mathrm{d} \tilde w_n\|_{L^2(Q'')} \Big) + R_n \\
        & \leq C  \Big( \|\varphi_n\|_{L^2(Q')} + \|\de_2^\mathrm{d} \varphi_n\|_{L^2(Q'')} + \|\de_{22}^\mathrm{d} \varphi_n\|_{L^2(Q'')}  \Big)  \cdot \\
        & \hspace{4em} \cdot  \Big( \|\varphi_n  \|_{L^2(Q')} +  \|\de_1^\mathrm{d}\varphi_n  \|_{L^2(Q'')} + \|\de_1^\mathrm{d} \tilde w_n\|_{L^2(Q'')} \Big) + R_n \, ,
    \end{split}
    \end{equation*}
    where we put
    \begin{equation} \label{eq:def of Rn}
    R_n := \sum_{(i,j) \in \ZZ^2} \ln^2 \de_{22}^\mathrm{d}\big( \zeta_n \varphi_n \big)^{i+1,j} \cdot \Big( \zeta_n^{i,j+1} \de_{1}^\mathrm{d} \big( \de_{1}^\mathrm{d} \varphi_n - \tilde w_n \big)^{i,j+1} \Big) \, 
    \end{equation}
    and we recall that $\mathrm{supp}(\zeta) \subcc Q'$. Therefore by~\eqref{eq2:bounds lot}, \eqref{eq2:bound de 22}, and~\eqref{eq2:bound de1 w}, we obtain that
    \begin{equation} \label{eq:almost finished bound}
    \begin{split}
         \en \|\de_{12}^\mathrm{d} \varphi_n\|_{L^2(Q)}^2 & \leq C \Big( \en \|\varphi_n\|^2_{L^2(Q')} + \en \| \de_1^\mathrm{d} \varphi_n\|_{L^2(Q'')}^2 + \en \| \de_2^\mathrm{d} \varphi_n\|_{L^2(Q'')}^2 \\
         & \hphantom{\leq C \Big(}  + \en \| \de_1^\mathrm{d} \tilde w_n\|_{L^2(Q'')}^2 + \en \| \de_{22}^\mathrm{d} \varphi_n\|_{L^2(Q'')}^2 \Big) + \en R_n \\
         &  \leq C  + \en R_n \, .
    \end{split}
    \end{equation}
    It only remains to control $\en R_n$.
    
    By a discrete integration by parts applied to~\eqref{eq:def of Rn} we get
    \begin{equation} \label{eq:integrating Rn by parts}
        \begin{split}    
           R_n = & - \hspace{-0.5em}\sum_{(i,j) \in \ZZ^2} \ln^2 \de_{2}^\mathrm{d}\big( \zeta_n \varphi_n \big)^{i+1,j+1}   \cdot \de_{2}^\mathrm{d} \big( \zeta_n  \de_{1}^\mathrm{d} \big( \de_{1}^\mathrm{d} \varphi_n - \tilde w_n \big)\big)^{i,j+1}  \, .
        \end{split}
    \end{equation}
    Using a discrete product rule we infer that 
    \begin{equation*}
        \begin{split}
         \de_{2}^\mathrm{d}\big( \zeta_n \varphi_n \big)^{i+1,j+1} = \de_{2}^\mathrm{d} \zeta_n^{i+1,j+1} \varphi_n^{i+1,j+1} + \zeta_n^{i+1,j+2} \de_{2}^\mathrm{d}\varphi_n^{i+1,j+1} 
        \end{split}
    \end{equation*}
    and
    \begin{equation*}
        \begin{split}          
            & \de_2^\mathrm{d} \big( \zeta_n \de_{1}^\mathrm{d} \big( \de_{1}^\mathrm{d} \varphi_n  - \tilde w_n  \big)  \big)^{i,j+1} \\ 
            &\quad = \de_2^\mathrm{d} \zeta_n^{i,j+1} \de_{1}^\mathrm{d} \big( \de_{1}^\mathrm{d} \varphi_n  - \tilde w_n  \big)^{i,j+1} +   \zeta_n^{i,j+2} \de_{1}^\mathrm{d} \Big( \de_{2}^\mathrm{d}  \big( \de_{1}^\mathrm{d} \varphi_n  - \tilde w_n  \big) \Big)^{i,j+1} \, .
        \end{split}
    \end{equation*}
    As a consequence, by suitably integrating by parts and applying a discrete product rule, the estimate of $\en R_n$ boils down to the estimate of the integrals of the following types:  
    \begin{equation*}
        \begin{aligned}
            & I_1  := \en \integral{Q'}{|\varphi_n| |\de_1^\mathrm{d} \varphi_n - \tilde w_n|}{\d x} \, , \quad 
            && I_2 :=  \en \integral{Q'}{|\nabla^\mathrm{d} \varphi_n| |\de_1^\mathrm{d} \varphi_n - \tilde w_n|}{\d x} \, , \\
            & I_3 := \en \integral{Q'}{| \varphi_n| |\de_{2}^\mathrm{d}( \de_1^\mathrm{d} \varphi_n - \tilde w_n ) |}{\d x} \, , \quad 
            && I_4 := \en \integral{Q'}{|\nabla^\mathrm{d} \varphi_n| |\de_{2}^\mathrm{d}( \de_1^\mathrm{d} \varphi_n - \tilde w_n ) |}{\d x} \, ,\\
            & I_5 := \en \integral{Q'}{|\de_2^\mathrm{d} \varphi_n| |\de_{12}^\mathrm{d}( \de_1^\mathrm{d} \varphi_n - \tilde w_n ) |}{\d x}  \, ,
        \end{aligned}
        \end{equation*}
        where we recall that $\mathrm{supp}(\zeta) \subcc Q'$. 
        
        The integrals $I_1$ and $I_2$ are easily treated with~\eqref{eq2:bounds lot} and~\eqref{eq2:de 1 phi and tilde w are close}. Indeed by H\"older's Inequality
        \begin{align*}
             I_1^2 & \leq \en  \|\varphi_n\|_{L^2(Q')}^2 \en  \|\de_1^\mathrm{d} \varphi_n - \tilde w_n\|_{L^2(Q'')}^2  \to 0 \, , \\
             I_2^2 & \leq \en \|\nabla^\mathrm{d} \varphi_n\|_{L^2(Q'')}^2 \en  \|\de_1^\mathrm{d} \varphi_n - \tilde w_n\|_{L^2(Q'')}^2 \to 0 \, .
        \end{align*}
    
        To bound $I_3$ and $I_4$, we use the control~\eqref{eq:mixed derivative for de1 - w} in Lemma~\ref{lemma:all estimates} below, i.e., 
        \begin{equation*} 
            \en \| \de_{2}^\mathrm{d}( \de_1^\mathrm{d} \varphi_n - \tilde w_n ) \|_{L^2(Q'')}^2 \leq C \, . 
        \end{equation*}
        Together with~\eqref{eq2:bounds lot}, by H\"older's Inequality this implies that 
        \begin{align*}
            I_3^2 & \leq \en  \|\varphi_n\|_{L^2(Q')}^2 \en \| \de_{2}^\mathrm{d}( \de_1^\mathrm{d} \varphi_n - \tilde w_n ) \|_{L^2(Q'')}^2  \to 0 \, , \\
            I_4^2 & \leq \en \|\nabla^\mathrm{d} \varphi_n\|_{L^2(Q'')}^2  \en \| \de_{2}^\mathrm{d}( \de_1^\mathrm{d} \varphi_n - \tilde w_n ) \|_{L^2(Q'')}^2  \to 0 \, .
       \end{align*}
    
       Finally, to estimate $I_5$ we resort to the bound~\eqref{eq:third derivative for de1 - w} in Lemma~\ref{lemma:all estimates} below, i.e.,
       \begin{equation*}
        \|\de_{12}^\mathrm{d}( \de_1^\mathrm{d} \varphi_n - \tilde w_n ) \|_{L^1(Q')} \leq C \frac{\dn}{\ln} \, .
       \end{equation*}
       Together with the $L^\infty$ bound $|\de_2^\mathrm{d} \varphi_n| \leq \frac{\pi}{\sqrt{2\dn}} $ (cf.~\eqref{eq:de 2 phi with arcsin}) and since $\arccos(1-\dn) \geq \sqrt{2 \dn}$ this yields
       \begin{equation*}
           I_5 \leq  C \en \frac{1}{\sqrt{\dn}} \frac{\dn}{\ln} \leq C \, .
       \end{equation*}
    
       The bounds on $I_1,I_2,I_3,I_4,I_5$ allow us to conclude that $\en R_n \leq C$ and, thanks to~\eqref{eq:almost finished bound}, that
       \begin{equation} \label{eq:final control on de12 phi}
           \en \|\de_{12}^\mathrm{d} \varphi_n\|_{L^2(Q)}^2 \leq C \, .
       \end{equation}
    
       We can now conclude the proof of the bound on $\en \|\de_1^\mathrm{d} z_n\|_{L^2(Q)}^2$. Thanks to~\eqref{eq:relation between z and de 2 phi} we have that 
       \begin{equation*}
        \begin{split}
        	| \de_1^\mathrm{d} z_n^{i,j} | & = \Big| \frac{z_n^{i+1,j} - z_n^{i,j}}{\ln} \Big| \\
        	& = \sqrt{\frac{2}{\dn}}\frac{1}{\ln} \Big| \sin \Big(\frac{1}{2} \arccos(1-\dn) \de_2^\mathrm{d} \varphi_n^{i+1,j} \Big) - \sin \Big( \frac{1}{2} \arccos(1-\dn) \de_2^\mathrm{d} \varphi_n^{i,j} \Big) \Big| \\
        	& \leq \frac{\arccos(1-\dn)}{\sqrt{2 \dn}} \Big| \frac{\de_2^\mathrm{d} \varphi_n^{i+1,j} - \de_2^\mathrm{d} \varphi_n^{i,j}}{\ln} \Big| \leq C | \de_{12}^\mathrm{d} \varphi_n^{i,j} | \, .
        \end{split}
        \end{equation*}
        The bound
       \begin{equation*}
        \en \|\de_1^\mathrm{d} z_n\|_{L^2(Q)}^2 \leq C 
       \end{equation*}
       then follows from~\eqref{eq:final control on de12 phi}.
     
       To control $\en \|\de_2^\mathrm{d} w_n\|_{L^2(Q)}^2$, we employ~\eqref{eq:def of tilde w} to deduce that
       \begin{equation*}
       		| \de_2^\mathrm{d} w_n^{i,j} | \leq \sqrt{\frac{2}{\dn}} \frac{\arccos(1-\dn)}{2} |\de_2^\mathrm{d} \tilde w_n^{i,j} | \, .
       \end{equation*}
       In view of~\eqref{eq:final control on de12 phi} and~\eqref{eq:mixed derivative for de1 - w} in Lemma~\ref{lemma:all estimates} below we conclude that\footnote{An alternative proof is to define  
$           \psi_n^{i,j} := \frac{1}{\arccos(1-\dn)} \bigg(\sum_{k=j_0}^{j-1} \ln (\thetv_n)^{i_0, k} +  \sum_{h=i_0}^{i-1} \ln (\theth_n)^{h,j}  \bigg) - b^n_{Q'} $
       with a suitable $b^n_{Q'} \in \RR$ and to follow the lines of the control on $\en \|\de_1^\mathrm{d} z_n\|_{L^2(Q)}^2$  with $\psi_n$ in place of $\varphi_n$. 
       }
       \begin{equation*}
       		\en \| \de_2^\mathrm{d} w_n \|_{L^2(Q)}^2 \leq  C \en \big( \| \de_{12}^\mathrm{d} \varphi_n \|_{L^2(Q)}^2 + \| \de_2^\mathrm{d} \tilde w_n - \de_{12}^\mathrm{d} \varphi_n \|_{L^2(Q'')}^2 \big) \leq C \, .
       \end{equation*}

       We prove the convergence $\curl(w_n,z_n) \weak 0$ in Lemma~\ref{lemma:curl to 0} below.
\end{proof}

In the following lemma we prove the bounds used in the proof of Proposition~\ref{prop:full MM bound}. In Step~3 we make use of a discrete Poincar\'e-Wirtinger inequality. Even though the inequality holds true under more general assumptions, for simplicity of notation in Step 3 we provide a proof in the setting we are interested in.  

\begin{lemma} \label{lemma:all estimates}
    Let $\varphi_n$   and  $Q \subcc Q' \subcc Q'' \subcc \Omega$ be as in the proof of Proposition~\ref{prop:full MM bound}. Then 
    \begin{gather}
         \en \|\varphi_n\|_{L^2(Q')}^2 + \en \| \nabla^\mathrm{d} \varphi_n \|_{L^2(Q'')}^2 \to 0  \, , \label{eq:lot} \\
         \en \| \de_{22}^\mathrm{d} \varphi_n \|_{L^2(Q'')}^2  \leq C  \label{eq:de 22 phi}\, .
    \end{gather}
    Moreover, let $\tilde w_n$ be as in~\eqref{eq:def of tilde w}. Then 
    \begin{gather} 
        \|\de_1^\mathrm{d} \varphi_n - \tilde w_n\|_{L^1(Q'')} \to 0 \, , \quad \en \|\de_1^\mathrm{d} \varphi_n - \tilde w_n\|^2_{L^2(Q'')} \to 0 \, , \label{eq:de 1 phi and tilde w are close}  \\    
        \en \|\de_1^\mathrm{d} \tilde w_n\|_{L^2(Q'')}^2 \leq C \, ,\label{eq:de 1 tilde w bounded}  \\
        \en \| \de_{2}^\mathrm{d}( \de_1^\mathrm{d} \varphi_n - \tilde w_n ) \|_{L^2(Q'')}^2 \leq C \, , \label{eq:mixed derivative for de1 - w} \\
        \|\de_{12}^\mathrm{d}( \de_1^\mathrm{d} \varphi_n - \tilde w_n ) \|_{L^1(Q')} \leq C \frac{\dn}{\ln} \, . \label{eq:third derivative for de1 - w}
    \end{gather}
\end{lemma}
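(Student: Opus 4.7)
My plan rests on two ingredients: the Modica--Mortola-type bounds \eqref{eq:discrete MM bound 1}--\eqref{eq:discrete MM bound 2} together with \eqref{eq:bounding from below with gradient 1}--\eqref{eq:bounding from below with gradient 2}, and a discrete vortex analysis for the lattice one-form $((\theth_n),(\thetv_n))$. The pivotal quantity is the plaquette sum
\begin{equation*}
    A^{i,j}:=(\theth_n)^{i,j}+(\thetv_n)^{i+1,j}-(\thetv_n)^{i,j}-(\theth_n)^{i,j+1}.
\end{equation*}
Since the spins close up around a unit plaquette, $A^{i,j}\in 2\pi\ZZ$, and since each angle lies in $[-\pi,\pi)$ we in fact have $A^{i,j}\in\{-2\pi,0,2\pi\}$. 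I call a cell \emph{bad} if at least one of the four angles entering $A^{i,j}$ has absolute value $\geq\pi/2$; on good cells $|A^{i,j}|<2\pi$ forces $A^{i,j}=0$. The counting estimate \eqref{eq:counting theta} together with its vertical analog then gives
\begin{equation*}
    \#\{(i,j)\in\I^n(Q'')\colon (i,j)\text{ is bad}\}\leq C\dn^{3/2}/\ln,
\end{equation*}
which replaces the curl-free property of a continuum gradient by control of a small exceptional set. I expect this step to be the technical core of the proof.

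With the vortex bookkeeping in hand, I will dispatch \eqref{eq:de 22 phi} and \eqref{eq:de 1 tilde w bounded} by a good/bad split in $z_n$. For \eqref{eq:de 22 phi}, rewrite $\de_{22}^{\mathrm{d}}\varphi_n=(\arccos(1-\dn))^{-1}\de_{2}^{\mathrm{d}}(\thetv_n)$ and use $(\thetv_n)=2\arcsin(\sqrt{\dn/2}\,z_n)$. On the set where $\sqrt{\dn/2}|z_n|\leq 1/2$, the Lipschitz constant of $\arcsin$ is bounded, so $|\de_{22}^{\mathrm{d}}\varphi_n|\leq C|\de_{2}^{\mathrm{d}} z_n|$ and the contribution is $\leq C\en\|\de_{2}^{\mathrm{d}} z_n\|_{L^2(Q'')}^2\leq C$ by \eqref{eq:bounding from below with gradient 2}. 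On its complement, whose measure is $\leq C\ln\dn^{3/2}$ by Chebyshev applied to $\|W(z_n)\|_{L^1}\leq C\en$, the trivial bound $|\de_{22}^{\mathrm{d}}\varphi_n|\leq C/(\sqrt{\dn}\ln)$ yields an $O(1)$ contribution. The estimate \eqref{eq:de 1 tilde w bounded} is identical with $w_n$ in place of $z_n$. The half of \eqref{eq:lot} concerning $\de_{2}^{\mathrm{d}}\varphi_n$ follows immediately from the pointwise bound $|\de_{2}^{\mathrm{d}}\varphi_n|\leq C|z_n|$ (Lipschitz bound on $\arcsin$ combined with the $L^\infty$ bound $|\de_{2}^{\mathrm{d}}\varphi_n|\leq\pi/\arccos(1-\dn)$) and the uniform bound $\|z_n\|_{L^2}^2\leq C$.

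Finally, a telescoping from \eqref{eq:def of phi} produces the key identities
\begin{equation*}
    \de_{1}^{\mathrm{d}}\varphi_n^{i,j}-\tilde w_n^{i,j}=\frac{1}{\arccos(1-\dn)}\sum_{k=j_0}^{j-1}A^{i,k},\qquad \de_{2}^{\mathrm{d}}(\de_{1}^{\mathrm{d}}\varphi_n-\tilde w_n)^{i,j}=\frac{A^{i,j}}{\ln\arccos(1-\dn)},
\end{equation*}
and a further difference yields $\de_{12}^{\mathrm{d}}(\de_{1}^{\mathrm{d}}\varphi_n-\tilde w_n)^{i,j}=(A^{i+1,j}-A^{i,j})/(\ln^2\arccos(1-\dn))$. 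Since only bad cells contribute and $|A^{i,j}|\leq 4\pi$ there, the second and third identities immediately give \eqref{eq:mixed derivative for de1 - w}, \eqref{eq:third derivative for de1 - w}, and the $L^1$ half of \eqref{eq:de 1 phi and tilde w are close}. The most delicate point, and the main obstacle, is the $L^2$ bound on the telescoping sum: Cauchy--Schwarz on the inner sum gives $|\de_{1}^{\mathrm{d}}\varphi_n^{i,j}-\tilde w_n^{i,j}|^2\leq C|B_{i,j}|^2/\dn$ with $B_{i,j}:=\{k<j\colon(i,k)\text{ bad}\}$, and a standard double-count using $|B_{i,j}|\leq|B_i|$, $\#\{j\}\leq C/\ln$, and the total bound $\sum_i|B_i|\leq C\dn^{3/2}/\ln$ yields $\|\de_{1}^{\mathrm{d}}\varphi_n-\tilde w_n\|_{L^2(Q'')}^2\leq C\dn^2/\ln$, so that $\en$ times this norm is $O(\dn^{3/2})\to 0$. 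To complete \eqref{eq:lot} I combine the bound on $\en\|\nabla^{\mathrm{d}}\varphi_n\|_{L^2(Q'')}^2$ (using $\en\|\tilde w_n\|_{L^2}^2\to 0$ for the $\de_{1}$-part) with a discrete Poincar\'e--Wirtinger inequality on $Q'$, whose applicability is ensured by the zero-mean choice of $a^n_{Q'}$; this discrete inequality will be proved separately in the setting at hand by standard compactness of piecewise affine interpolations.
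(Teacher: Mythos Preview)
Your proposal is correct and follows essentially the same route as the paper: the plaquette vorticity $A^{i,j}$ (the paper's $V_n^{i,j}$) with the counting bound $\#\{A\neq 0\}\leq C\dn^{3/2}/\ln$, the good/bad split for \eqref{eq:de 22 phi} and \eqref{eq:de 1 tilde w bounded}, the telescoping identities for \eqref{eq:de 1 phi and tilde w are close}, \eqref{eq:mixed derivative for de1 - w}, \eqref{eq:third derivative for de1 - w}, and the discrete Poincar\'e--Wirtinger via piecewise affine interpolation for \eqref{eq:lot}. The only cosmetic difference is that the paper bounds the bad set directly from the counting argument of \eqref{eq:counting theta} (applied to $\{|\theta|\geq\pi/3\}$) rather than via Chebyshev on $W(z_n)$, and for the $L^2$ half of \eqref{eq:de 1 phi and tilde w are close} the paper invokes the superadditivity $\sum_i |B_i|^2\leq(\sum_i|B_i|)^2$ explicitly, which is precisely your ``standard double-count''.
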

\begin{proof}

\ul{Step 1}: Control on $\|\de_2^\mathrm{d} \varphi_n\|_{L^2(Q'')}$. 
By~\eqref{eq:de 2 phi with arcsin} we deduce that 
\begin{equation*}
    |\de_2^\mathrm{d} \varphi_n^{i,j}| \leq C |z_n^{i,j}| \, .
\end{equation*}
Thanks to~\eqref{eq:discrete MM bound 2} and using that $Q'' \subcc \Omega$ we infer that 
\begin{equation*}
        C \en  \geq  \ln^2 \hspace{-1.5em} \sum_{Q_{\ln}(i,j) \cap Q'' \neq \emptyset} \hspace{-1.5em}   W(z_n^{i,j}) =    \ln^2 \hspace{-1.5em} \sum_{Q_{\ln}(i,j) \cap Q'' \neq \emptyset} \hspace{-1.5em}   \big(1- (z_n^{i,j})^2\big)^2  \geq \ln^2 \hspace{-1.5em} \sum_{Q_{\ln}(i,j) \cap Q'' \neq \emptyset} \hspace{-1.5em}\big( -1 +  \tfrac{1}{2}(z_n^{i,j})^4 \big)
\end{equation*}
which yields the bound 
\begin{equation} \label{eq:L4 bound on z}
    \|\de_2^\mathrm{d} \varphi_n\|_{L^4(Q'')} \leq C \| z_n \|_{L^4(Q'')} \leq C
\end{equation}
and, in particular, the bound 
\begin{equation} \label{eq:control on de 2 phi}
    \|\de_2^\mathrm{d} \varphi_n\|_{L^2(Q'')} \leq C \, .
\end{equation}

\ul{Step 2}: Control on $\en \|\de_1^\mathrm{d} \varphi_n\|^2_{L^2(Q'')}$.  Following the lines of Step 1 with $w_n$ in place of~$z_n$ and with $\tilde w_n$ in place of~$\de_2^\mathrm{d} \varphi_n$, cf.~\eqref{eq:de 2 phi with arcsin}, we deduce that 
\begin{equation} \label{eq:bound on tilde w}
    \|\tilde w_n\|_{L^2(Q'')} \leq C \, .
\end{equation}

We prove now~\eqref{eq:de 1 phi and tilde w are close}. We start by observing that
\begin{equation*}
    \begin{split}    
       &   \de_1^\mathrm{d} \varphi_n^{i,j} - \tilde w_n^{i,j} \\
       & = \frac{1}{\arccos(1-\dn)} \bigg(   \big( (\theth_n)^{i,j_0} - (\theth_n)^{i,j}\big) + \sum_{k=j_0}^{j-1}  \big( (\thetv_n)^{i+1, k}  -   (\thetv_n)^{i, k} \big) \bigg) \, .
    \end{split}
\end{equation*}
Subtracting the previous equality evaluated at $j+1$ and the same one evaluated at $j$, we get 
\begin{equation} \label{eq:gap is due to vorticity}
    \begin{split}
        & \de_1^\mathrm{d} \varphi_n^{i,j+1} - \tilde w_n^{i,j+1} - \de_1^\mathrm{d} \varphi_n^{i,j} + \tilde w_n^{i,j}\\
        & =  \frac{1}{\arccos(1-\dn)} \bigg(    (\theth_n)^{i,j} + (\thetv_n)^{i+1, j}  - (\theth_n)^{i,j+1}  -   (\thetv_n)^{i, j}   \bigg) \, .
    \end{split}
\end{equation}
This entails that for $i$ fixed the difference $\de_1^\mathrm{d} \varphi_n^{i, \, \bigcdot \,} - \tilde w_n^{i, \, \bigcdot \,}$ changes from $j$ to $j+1$ if and only if the discrete vorticity 
\begin{equation*}      
    V_n^{i,j} :=   (\theth_n)^{i,j} + (\thetv_n)^{i+1, j}  - (\theth_n)^{i,j+1}  -   (\thetv_n)^{i, j}   
\end{equation*}
is non-zero.  Note that $\de_1^\mathrm{d} \varphi_n^{i,j_0} = \tilde w_n^{i,j_0}$, so that $\de_1^\mathrm{d} \varphi_n^{i, \, \bigcdot \,} \equiv \tilde w_n^{i, \, \bigcdot \,}$ if $V_n^{i, \, \bigcdot \,} \equiv 0$.

We count how many times $V_n^{i,j}$ is different from zero. First of all, we remark that~$V_n^{i,j} \in \{-2\pi,0,2\pi\}$ (indeed, it is a multiple of $2\pi$ and $\theth_n, \thetv_n \in [-\pi,\pi)$). Next we observe that $|V_n^{i,j}| = 2\pi$ implies that one of the four values $|(\theth_n)^{i,j}|$, $|(\thetv_n)^{i+1, j}|$, $|(\theth_n)^{i,j+1}|$, $|(\thetv_n)^{i, j}|$ must be greater than or equal to $\frac{\pi}{2}$. This observation leads us to a counting argument analogous to the one done in~\eqref{eq:counting theta}. Hence  we conclude that 
\begin{equation} \label{eq:number of bad for V}
    \# \{(i,j) \ : \ Q_{\ln}(i,j) \cap Q'' \neq \emptyset \text{ and } V_n^{i,j} \neq 0 \} \leq C \frac{\dn^{3/2}}{\ln} \, .
\end{equation}

Let us now fix $i$. We need to estimate the distance between $\de_1^\mathrm{d} \varphi_n^{i, \, \bigcdot \,}$ and $\tilde w_n^{i,\, \bigcdot \,}$ if it occurs that $V_n^{i,j} \neq 0$ for some $j$. By~\eqref{eq:gap is due to vorticity} we deduce that $|V_n^{i,j}|=2\pi$ implies
\begin{equation*}
    \begin{split}
         \big|\de_1^\mathrm{d} \varphi_n^{i,j+1} - \tilde w_n^{i,j+1}\big| & \leq  \big|\de_1^\mathrm{d} \varphi_n^{i,j} - \tilde w_n^{i,j} \big| + \frac{1}{\arccos(1-\dn)} | V_n^{i,j} | \\
        & \leq \big|\de_1^\mathrm{d} \varphi_n^{i,j} - \tilde w_n^{i,j} \big| + \frac{2\pi}{\sqrt{2\dn}} \, ,
    \end{split}
\end{equation*}
for $n$ large enough. As already stressed, $|V_n^{i,j}|=0$ implies instead that  $|\de_1^\mathrm{d} \varphi_n^{i,j+1} - \tilde w_n^{i,j+1}|  = |\de_1^\mathrm{d} \varphi_n^{i,j} - \tilde w_n^{i,j} |$.  With a chain of inequalities (that starts for the maximal $j$ such that $Q_{\ln}(i,j) \cap Q'' \neq \emptyset$ and stops when $j=j_0$) and using the fact that $\de_1^\mathrm{d} \varphi_n^{i,j_0} = \tilde w_n^{i,j_0}$, we conclude that 
\begin{equation*}
   \sup_j \big|\de_1^\mathrm{d} \varphi_n^{i,j} - \tilde w_n^{i,j}\big| \leq \frac{2 \pi}{\sqrt{2 \dn}}    \# \{ j \ : \ Q_{\ln}(i,j) \cap Q'' \neq \emptyset \text{ and } V_n^{i,j} \neq 0 \} \, .
\end{equation*}
Together with~\eqref{eq:number of bad for V}, this allows us to estimate the $L^1$ norm as follows 
\begin{equation*}
    \begin{split}
        \| \de_1^\mathrm{d} \varphi_n  - \tilde w_n  \|_{L^1(Q'')} & \leq \ln^2  \sum_{(i,j)}    \big| \de_1^\mathrm{d} \varphi_n^{i,j}  - \tilde w_n^{i,j}  \big|  \leq C \ln \hspace{-1em}\sum_{\substack{i \\ V_n^{i,j^*} \neq 0 \\ \text{for some } j^*}} \hspace{-1em} \sup_j \big| \de_1^\mathrm{d} \varphi_n^{i,j}  - \tilde w_n^{i,j}  \big| \\
        & \leq C \ln \hspace{-1em} \sum_{\substack{i \\ V_n^{i,j^*} \neq 0 \\ \text{for some } j^*}} \hspace{-1em} \frac{2 \pi}{\sqrt{2 \dn}}   \# \{ j \ : \  V_n^{i,j} \neq 0 \}  \leq C \frac{\ln}{\sqrt{\dn}} \#\{ (i,j) \ : \ V_n^{i,j} \neq 0\} \\
        & \leq  C  \dn \, ,
    \end{split}
\end{equation*}
the sums being always taken for $(i,j)$ such that $Q_{\ln}(i,j) \cap Q'' \neq \emptyset$. Analogously, by the superadditivity of $t \mapsto t^2$ in $[0,+\infty)$ and since $\en = \frac{\ln}{\sqrt{2 \dn}}$ we get that 
\begin{equation*}
    \| \de_1^\mathrm{d} \varphi_n  - \tilde w_n  \|^2_{L^2(Q'')}  \leq C \frac{\ln}{ \dn} \big( \#\{ (i,j) \ : \ V_n^{i,j} \neq 0\} \big)^2 \leq C \frac{\dn^{2}}{\ln} \leq C \frac{\dn^{3/2}}{\en} \, .
\end{equation*}
This concludes the proof of the claim~\eqref{eq:de 1 phi and tilde w are close}. We remark that \eqref{eq:bound on tilde w} and \eqref{eq:de 1 phi and tilde w are close} imply that 
\begin{equation} \label{eq:control on de 1 phi}
    \en \|\de_1^\mathrm{d} \varphi_n\|^2_{L^2(Q'')} \to 0 \, .
\end{equation}

\ul{Step 3}: Control on $\en \| \varphi_n\|^2_{L^2(Q')}$. We claim that 
\begin{equation} \label{eq:control on phi}
    \en \| \varphi_n\|^2_{L^2(Q')} \to 0 \, ,
\end{equation}
where we recall that $Q'$ is such that $\mathrm{supp}(\zeta) \subcc Q'$. To prove the claim we resort to the following discrete Poincar\'e-Wirtinger inequality: for $n$ large enough we have that
\begin{equation} \label{eq:discrete Poincare-Wirtinger}
    \|\varphi_n\|_{L^2(Q')} \leq C \| \nabla^\mathrm{d} \varphi_n \|_{L^2(Q'')} \, .
\end{equation}
This directly implies~\eqref{eq:control on phi} by~\eqref{eq:control on de 1 phi} and~\eqref{eq:control on de 2 phi}.

To prove the discrete Poincar\'e-Wirtinger inequality, we apply the well-known classical inequality to the piecewise affine interpolation $\hat \varphi_n$ of $\varphi_n$ defined as follows. Let $T^-_{\ln}(i,j)$ and $T^+_{\ln}(i,j)$ be the two triangles partitioning the square $Q_{\ln}(i,j)$ defined by 
\begin{align*}
    T^-_{\ln}(i,j) & := \{ \ln (i + s, j + t) \in Q_{\ln}(i,j)  \ : \ s \in [0,1] \, , t \in [0,1-s]\} \, , \\
    T^+_{\ln}(i,j) & := \{ \ln (i + s, j + t) \in Q_{\ln}(i,j) \ : \ s \in (0,1) \, , t \in (1-s,1) \} \, .
\end{align*}
The function $\hat \varphi_n$ is defined in $T^-_{\ln}(i,j)$ by interpolating the values of $\varphi_n$ on the three vertices of $T^-_{\ln}(i,j)$, i.e., 
\begin{equation} \label{eq:affine interpolation 1}
    \hat \varphi_n(x) := \varphi_n^{i,j} + (x_1 - \ln i) \frac{\varphi_n^{i+1,j} - \varphi_n^{i,j}}{\ln} + (x_2 - \ln j) \frac{\varphi_n^{i,j+1} - \varphi_n^{i,j}}{\ln} \, ,
\end{equation}
for every $x = (x_1,x_2) \in T^-_{\ln}(i,j)$. Analogously, 
\begin{equation}  \label{eq:affine interpolation 2}
    \begin{split}
        \hat \varphi_n(x)  := \varphi_n^{i,j+1} & + (x_1 - \ln i) \frac{\varphi_n^{i+1,j+1} - \varphi_n^{i,j+1}}{\ln} \\
        & + (x_2 - \ln (j+1)) \frac{\varphi_n^{i+1,j+1} - \varphi_n^{i+1,j}}{\ln} \, ,
    \end{split}
\end{equation}
for every $x = (x_1,x_2) \in T^+_{\ln}(i,j)$. 

With the definition of $\hat \varphi_n$ at hand, we can choose the value of $a^n_{Q'}$ in~\eqref{eq:def of phi}. We remark that adding a constant to $\varphi_n$ results into the addition of the same constant to the average $\frac{1}{|Q'|} \int_{Q'} \hat \varphi_n$, cf.~\eqref{eq:affine interpolation 1}--\eqref{eq:affine interpolation 2}. Therefore we can choose $a^n_{Q'}$ so that $ \frac{1}{|Q'|} \int_{Q'} \hat \varphi_n = 0$.  

The gradient of $\hat \varphi_n$ is constant in $T^\pm_{\ln}(i,j)$ and is given by
\begin{equation*}  
    \begin{aligned}
        \nabla \hat \varphi_n &=  \big(\de_1^\mathrm{d} \varphi_n^{i,j}, \de_2^\mathrm{d} \varphi_n^{i,j} \big) \quad \text{in } T^-_{\ln}(i,j) \, , \\
        \nabla \hat \varphi_n &= \big(\de_1^\mathrm{d} \varphi_n^{i,j+1}, \de_2^\mathrm{d} \varphi_n^{i+1,j} \big) \quad \text{in } T^+_{\ln}(i,j) \, .
    \end{aligned}
\end{equation*}
In particular, for $n$ is large enough,
\begin{equation} \label{eq:gradient of affine controlled by pc}        
    \| \nabla \hat \varphi_n \|_{L^2(Q')} \leq C \| \nabla^\mathrm{d} \varphi_n \|_{L^2(Q'')} \, .
\end{equation}

We can control the $L^2$ distance between the piecewise constant function $\varphi_n$ and the piecewise affine interpolation $\hat \varphi_n$ as follows. By~\eqref{eq:affine interpolation 1} we obtain that 
\begin{equation*}
    \hat \varphi_n(x) -  \varphi_n^{i,j} =   (x_1 - \ln i) \de_1^\mathrm{d} \varphi_n^{i,j} + (x_2 - \ln j) \de_2^\mathrm{d} \varphi_n^{i,j}
\end{equation*}
for $x \in T^-_{\ln}(i,j)$, yielding  
\begin{equation*}
    \| \hat \varphi_n - \varphi_n \|_{L^2(T^-_{\ln}(i,j))} \leq  C \ln \| \nabla^\mathrm{d} \varphi_n\|_{L^2(T^-_{\ln}(i,j))} \, .
\end{equation*}
Analogously, by~\eqref{eq:affine interpolation 2} for $x \in T^+_{\ln}(i,j)$ we have that 
\begin{equation*}
    \begin{split}
        \hat \varphi_n(x) - \varphi_n^{i,j}= \ln \de_2^\mathrm{d}\varphi_n^{i,j} & + (x_1 - \ln i) \de_1^\mathrm{d} \varphi_n^{i,j+1} \\
        & + (x_2 - \ln (j+1)) \de_2^\mathrm{d} \varphi_n^{i+1,j} \, ,
    \end{split}
\end{equation*}
and thus
\begin{equation*}
    \begin{split}
       & \|  \hat \varphi_n - \varphi_n \|_{L^2(T^+_{\ln}(i,j))}\\
       &\quad  \leq C \ln \big( \| \nabla^\mathrm{d} \varphi_n\|_{L^2(T^+_{\ln}(i,j))} + \|  \de_1^\mathrm{d} \varphi_n \|_{L^2(T^+_{\ln}(i,j+1))} + \| \de_2^\mathrm{d} \varphi_n \|_{L^2(T^+_{\ln}(i+1,j))} \big)  \, .
    \end{split}
\end{equation*}
In conclusion, we get 
\begin{equation} \label{eq:affine and constant}
    \|  \hat \varphi_n - \varphi_n \|_{L^2(Q')} \leq C  \ln  \| \nabla^\mathrm{d} \varphi_n \|_{L^2(Q'')}
\end{equation}
and thus $ \| \varphi_n \|_{L^2(Q')} \leq   \| \hat \varphi_n \|_{L^2(Q')} +   \| \nabla^\mathrm{d} \varphi_n \|_{L^2(Q'')}$ for $n$ large enough. Since $\hat \varphi_n$ has zero average in $Q'$, by the Poincar\'e-Wirtinger inequality and by~\eqref{eq:gradient of affine controlled by pc} we deduce that 
\begin{equation*}
    \begin{split}
        \| \varphi_n \|_{L^2(Q')} & \leq   \| \hat \varphi_n \|_{L^2(Q')} +  \| \nabla^\mathrm{d} \varphi_n \|_{L^2(Q'')}  \leq  C \| \nabla \hat \varphi_n \|_{L^2(Q')} +  \| \nabla^\mathrm{d} \varphi_n \|_{L^2(Q'')}  \\
        & \leq  C \| \nabla^\mathrm{d} \varphi_n \|_{L^2(Q'')} \, ,
    \end{split}
\end{equation*}
which proves~\eqref{eq:discrete Poincare-Wirtinger}.

By~\eqref{eq:control on phi} together with~\eqref{eq:control on de 2 phi} and~\eqref{eq:control on de 1 phi}, we conclude the proof of~\eqref{eq:lot}.

\ul{Step 4}: Control on $\en \|\de_{22}^\mathrm{d} \varphi_n\|^2_{L^2(Q'')}$.  Unfortunately this does not follow immediately from~\eqref{eq:de 2 phi with arcsin}  and the energy bound~\eqref{eq:discrete MM bound 2}, since the function $\arcsin$ is   only locally Lipschitz on the interval $(-1,1)$. However, we can estimate the number of points $(i,j)$ for which $\sqrt{\tfrac{\dn}{2}} z_n^{i,j}$ is close to~$1$ or~$-1$. More precisely, if $|z_n^{i,j}| \geq \tfrac{1}{2}\sqrt{\tfrac{2}{\dn}}$, then (cf.~\eqref{eq:order parameter}) $|(\thetv_n)^{i,j}| = 2 \arcsin \big( \sqrt{\tfrac{\dn}{2}} |z_n^{i,j}| \big) \geq \tfrac{\pi}{3}$. With a counting argument analogous to that in~\eqref{eq:counting theta}   we obtain that 
\begin{equation*}
    \# \big\{ (i,j) \in \I^n(\Omega)  \ : \  \big|(\thetv_n)^{i,j}\big| \geq \tfrac{\pi}{3} \big\} \leq C \frac{\dn^{3/2}}{\ln}  \, .
\end{equation*}
Analogously, if $|z_n^{i,j+1}| \geq \tfrac{1}{2}\sqrt{\tfrac{2}{\dn}}$, then $|(\thetv_n)^{i,j+1}| \geq \tfrac{\pi}{3}$. Furthermore we have that 
\begin{equation*}
    \# \big\{ (i,j) \in \I^n(\Omega)  \ : \  \big|(\thetv_n)^{i,j+1}\big| \geq \tfrac{\pi}{3} \big\} \leq C \frac{\dn^{3/2}}{\ln}  \, .
\end{equation*}
In both cases, from~\eqref{eq:de 2 phi with arcsin} we get the rough bound $|\de_2^\mathrm{d} \varphi_n^{i,j}| \leq \frac{2}{\arccos(1-\dn)} \frac{\pi}{2}$ and thus
\begin{equation*}
\big| \de_{22}^\mathrm{d} \varphi_n^{i,j}\big| \leq \frac{C}{\ln \sqrt{\dn}} \, ,
\end{equation*}
for $n$ large enough.

If, instead, $|z_n^{i,j}| < \tfrac{1}{2}\sqrt{\tfrac{2}{\dn}}$ and  $|z_n^{i,j+1}| < \tfrac{1}{2}\sqrt{\tfrac{2}{\dn}}$ then~\eqref{eq:de 2 phi with arcsin} yields for $n$ large enough
\begin{equation*}
    \big|\de_{22}^\mathrm{d} \varphi_n^{i,j} \big| \leq \frac{2}{\arccos(1-\dn)} \sqrt{\frac{\dn}{2}}  \frac{2}{\sqrt{3}}   \big|\de_2^\mathrm{d} z_n^{i,j} \big|  \leq C  \big|\de_2^\mathrm{d} z_n^{i,j} \big| \, ,
\end{equation*}
where we used the fact that $\arcsin$ is Lipschitz in $[-\tfrac{1}{2},\tfrac{1}{2}]$ with Lipschitz constant $\tfrac{2}{\sqrt{3}}$.

We now split the sum in
\begin{equation*}
    \|\de_{22}^\mathrm{d} \varphi_n\|^2_{L^2(Q'')} \leq  \ln^2 \hspace{-1.5em} \sum_{\substack{Q_{\ln}(i,j) \cap Q'' \neq \emptyset}} \hspace{-1.5em}   \big| \de_{22}^\mathrm{d} \varphi_n^{i,j} \big|^2 
\end{equation*}
 into the sum over indices such that 
\begin{equation*}
    |z_n^{i,j}| \geq \frac{1}{2}\sqrt{\frac{2}{\dn}} \ \text{ or }\  |z_n^{i,j+1}| \geq \frac{1}{2}\sqrt{\frac{2}{\dn}}
\end{equation*}
and the sum over indices such that 
\begin{equation*}
    |z_n^{i,j}| < \frac{1}{2}\sqrt{\frac{2}{\dn}} \ \text{ and }\  |z_n^{i,j+1}| < \frac{1}{2}\sqrt{\frac{2}{\dn}} \, .
\end{equation*}
Putting together the estimates obtained in this step we conclude that 
\begin{equation*}
    \en \|\de_{22}^\mathrm{d} \varphi_n\|^2_{L^2(Q'')}  \leq C \en \frac{\ln^2}{\ln^2 \dn} \frac{\dn^{3/2}}{\ln} + C \en \ln^2 \hspace{-1.5em} \sum_{\substack{Q_{\ln}(i,j) \cap Q'' \neq \emptyset}} \hspace{-1.5em}   \big| \de_{2}^\mathrm{d}z_n^{i,j} \big|^2 \leq C \bigg( 1 + \en \ln^2 \hspace{-1.5em} \sum_{\substack{(i,j) \in \I^n(\Omega)}} \hspace{-1.5em}   \big| \de_{2}^\mathrm{d}z_n^{i,j} \big|^2 \bigg) \, .
\end{equation*}
Thanks to the energy bound~\eqref{eq:discrete MM bound 2}, we conclude the proof of~\eqref{eq:de 22 phi}.

\ul{Step 5}: Control on $\en \|\de_1^\mathrm{d} \tilde w_n\|_{L^2(Q'')}$. Following the lines of Step 4 with $w_n$ in place of~$z_n$, $\tilde w_n$ in place of $\de_2^\mathrm{d} \varphi_n$, and $\de_1^\mathrm{d} \tilde w_n$ in place of $\de_{22}^\mathrm{d} \varphi_n$, we deduce the bound~\eqref{eq:de 1 tilde w bounded}.

\ul{Step 6}: Proof of \eqref{eq:mixed derivative for de1 - w}. The equality in~\eqref{eq:gap is due to vorticity} can be recast as follows 
        \begin{equation} \label{eq:second derivative ist V}
            \de_2^\mathrm{d} \big( \de_1^\mathrm{d} \varphi_n - \tilde w_n \big)^{i,j} = \frac{1}{\ln \arccos(1-\dn)} V_n^{i,j} \, .
        \end{equation}
        Recalling from Step~2 that $|V_n^{i,j}| \leq 2\pi$, together with the estimate~\eqref{eq:number of bad for V} on the number of points where $V_n^{i,j} \neq 0$ the previous equality yields 
        \begin{equation*}
            \en \| \de_{2}^\mathrm{d}( \de_1^\mathrm{d} \varphi_n - \tilde w_n ) \|_{L^2(Q'')}^2 \leq \en \frac{\dn^{3/2}}{\ln}  \ln^2 \frac{C}{\ln^2 \dn } \leq C \, ,
        \end{equation*}  
        namely~\eqref{eq:mixed derivative for de1 - w}.

\ul{Step 7}:  Proof of \eqref{eq:third derivative for de1 - w}. By~\eqref{eq:second derivative ist V} and~\eqref{eq:number of bad for V} we have that
       \begin{equation*}
        \begin{split}
            \|\de_{12}^\mathrm{d}( \de_1^\mathrm{d} \varphi_n - \tilde w_n ) \|_{L^1(Q')} & \leq \frac{2}{\ln} \sum_{Q_{\ln}(i,j) \cap Q'' \neq \emptyset} \hspace{-1em} \ln^2 | \de_{2}^\mathrm{d}( \de_1^\mathrm{d} \varphi_n  - \tilde w_n )^{i,j}| \\
            & \leq 2 \sum_{V_n^{i,j} \neq 0}  \ln \frac{2\pi}{\ln \arccos(1-\dn)} \leq  C \frac{\dn^{3/2}}{\ln} \frac{1}{ \sqrt{2 \dn}}  \\
            & \leq C \frac{\dn}{\ln} \, .
        \end{split}
       \end{equation*}
\end{proof}

\begin{lemma} \label{lemma:curl to 0}
    Let $(w_n,z_n)$ and $Q$ be as in Proposition~\ref{prop:full MM bound}. Then 
    \begin{equation} \label{eq2:curl to 0}
        \curl(w_n,z_n) \weak 0  \quad \text{ in } \mathcal{D}'(Q) \, .     
    \end{equation}
\end{lemma}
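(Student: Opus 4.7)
The plan is to compare $(w_n,z_n)$ to the gradient of the piecewise affine interpolation $\hat \varphi_n$ of the angular lifting $\varphi_n$, defined as in Step~3 of Lemma~\ref{lemma:all estimates}. Since $\hat \varphi_n \in W^{1,\infty}_{\mathrm{loc}}$ is continuous and piecewise affine, the identity $\curl(\nabla \hat \varphi_n) = 0$ holds in $\mathcal{D}'(Q)$. As the distributional $\curl$ is continuous with respect to $L^1$-convergence of the underlying vector field, it then suffices to prove
\[
  \|(w_n,z_n) - \nabla \hat \varphi_n\|_{L^1(Q;\RR^2)} \to 0,
\]
so that $\langle \curl(w_n,z_n),\xi\rangle = \langle \curl((w_n,z_n) - \nabla\hat\varphi_n),\xi\rangle \to 0$ for every $\xi \in C_c^\infty(Q)$.

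I would establish this $L^1$-convergence by a three-step comparison. First, on each square $Q_{\ln}(i,j)$ I would replace $(w_n^{i,j},z_n^{i,j})$ by $(\tilde w_n^{i,j}, \de_2^\mathrm{d} \varphi_n^{i,j})$, defined through~\eqref{eq:def of tilde w} and~\eqref{eq:de 2 phi with arcsin}. On the set of indices where $|\theth_n^{i,j}|, |\thetv_n^{i,j}| \leq \pi/2$, a Taylor expansion of $\arcsin$ at the origin combined with the asymptotics of $1/\arccos(1{-}\dn)$ gives pointwise bounds $|\tilde w_n^{i,j} - w_n^{i,j}| \leq C\dn(|w_n^{i,j}|+|w_n^{i,j}|^3)$ and analogously for $z_n$; combined with the $L^4$ estimates $\|w_n\|_{L^4(Q)},\|z_n\|_{L^4(Q)} \leq C$ (obtained as in Step~1 of Lemma~\ref{lemma:all estimates}), this contributes $O(\dn)$ in $L^1$. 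On the complementary set, which by a counting argument as in~\eqref{eq:counting theta} contains at most $C\dn^{3/2}/\ln$ indices, each difference is of order $1/\sqrt{\dn}$, producing an $L^1$ contribution of order $\ln\,\dn \to 0$.

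Second, I would replace $\tilde w_n$ by $\de_1^\mathrm{d}\varphi_n$ using the convergence $\|\de_1^\mathrm{d}\varphi_n - \tilde w_n\|_{L^1(Q'')} \to 0$ proven in~\eqref{eq:de 1 phi and tilde w are close}. Third, I would pass from the square-piecewise-constant field $(\de_1^\mathrm{d}\varphi_n,\de_2^\mathrm{d}\varphi_n)$ to $\nabla\hat\varphi_n$, which is piecewise constant on the triangles $T^\pm_\ln(i,j)$: the two fields agree on every $T^-_\ln(i,j)$, while on each $T^+_\ln(i,j)$ they differ by $\ln\,\de_{12}^\mathrm{d}\varphi_n^{i,j}$ in each coordinate. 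Consequently the corresponding $L^1$-error is bounded by
\[
  C\,\ln\,|Q|^{1/2}\,\|\de_{12}^\mathrm{d}\varphi_n\|_{L^2(Q)} \leq C\,\ln/\sqrt{\en} = C\,\ln^{1/2}(2\dn)^{1/4} \to 0,
\]
thanks to the estimate~\eqref{eq:final control on de12 phi} established in the proof of Proposition~\ref{prop:full MM bound}.

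The main technical obstacle is the careful bookkeeping in the first step, since the crude $L^\infty$ bound $|w_n|,|z_n| \leq \sqrt{2/\dn}$ blows up and the small-angle Taylor expansion breaks down at plaquettes with large $|\theth_n|$ or $|\thetv_n|$; such bad plaquettes are ruled out via the counting argument of~\eqref{eq:counting theta}. All remaining ingredients are immediate consequences of the bounds already collected in Lemma~\ref{lemma:all estimates} and in the proof of Proposition~\ref{prop:full MM bound}, and the claimed convergence $\curl(w_n,z_n) \weak 0$ in $\mathcal{D}'(Q)$ follows.
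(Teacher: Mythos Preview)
Your proposal is correct and follows essentially the same route as the paper's proof: both argue that $(w_n,z_n)-\nabla\hat\varphi_n\to 0$ in $L^1(Q;\RR^2)$ via the three-term splitting $(w_n,z_n)\to(\tilde w_n,\de_2^\mathrm{d}\varphi_n)\to\nabla^\mathrm{d}\varphi_n\to\nabla\hat\varphi_n$, using~\eqref{eq:de 1 phi and tilde w are close} and~\eqref{eq:final control on de12 phi} for the last two comparisons. The only minor difference is in the first comparison: the paper exploits the global bound $|\sin x - x|\leq C|x|^3$ together with the $L^4$ control~\eqref{eq:L4 bound on z}, which avoids splitting into small and large angles; your use of the Taylor expansion of $\arcsin$ forces the dichotomy because $\arcsin$ is not globally controlled by a cubic, but the counting argument you invoke handles the bad set correctly.
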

\begin{proof}
Let $\varphi_n$ be as in the proofs of Proposition~\ref{prop:full MM bound} and Lemma~\ref{lemma:all estimates}.  The idea of the proof is to make use of the fact that $(w_n,z_n)$ is close to a $\curl$-free vector field on $Q$. Specifically, we shall prove that $ (w_n,z_n) - \nabla \hat \varphi_n \to 0$ in $L^1(Q;\RR^2)$, where $\hat \varphi_n$ is the piecewise affine interpolation defined in~\eqref{eq:affine interpolation 1}--\eqref{eq:affine interpolation 2}. Let us start by proving that 
\begin{equation} \label{eq:gradient of pa close to discrete}
    \|\de_1 \hat \varphi_n - \de_1^\mathrm{d} \varphi_n\|_{L^1(Q)} \to 0 \, , \quad \|\de_2 \hat \varphi_n -\de_2^\mathrm{d} \varphi_n\|_{L^1(Q)} \to 0 \, .
\end{equation}
Following the notation introduced in Step~3 of Lemma~\ref{lemma:all estimates} we have that 
\begin{equation*}
 \nabla \hat \varphi_n(x) - \nabla^\mathrm{d} \varphi_n^{i,j} = 0
 \end{equation*}
 for $x \in T^-_{\ln}(i,j)$. In contrast, for $x \in T^+_{\ln}(i,j)$ we have 
 \begin{equation*}
    \nabla \hat \varphi_n(x) - \nabla^\mathrm{d} \varphi_n^{i,j} = \ln  (\de_{12}^\mathrm{d}  \varphi_n^{i,j},\de_{12}^\mathrm{d}  \varphi_n^{i,j}) \, .
 \end{equation*}
 It follows that 
 \begin{equation*}
     \|  \nabla \hat \varphi_n - \nabla^\mathrm{d} \varphi_n \|_{L^2(Q)}^2 \leq  \ln^2 \| \de_{12}^\mathrm{d}  \varphi_n \|_{L^2(Q)}^2 \leq C  \ln \sqrt{\dn}  \to 0\, ,
 \end{equation*}
 where in the last inequality we used~\eqref{eq:claim on control on mixed derivative} and the fact that $\en = \frac{\ln}{\sqrt{2 \dn}}$.  

 Next, we show that 
 \begin{equation} \label{eq:discrete gradient close to w and z}
     \|w_n - \de_1^\mathrm{d} \varphi_n\|_{L^1(Q)} \to 0 \, , \quad \|z_n - \de_2^\mathrm{d} \varphi_n\|_{L^1(Q)} \to 0 \, .
 \end{equation}
  From~\eqref{eq:relation between z and de 2 phi} and since $|\sin x - x \, | \leq C |x|^3$ it follows that 
 \begin{equation*}
     \begin{split}    
         \Big| z_n^{i,j} - \de_2^\mathrm{d} \varphi_n^{i,j} \Big| & \leq \sqrt{\frac{2}{\dn}} \Big|  \sin \Big( \frac{\arccos(1-\dn) }{2} \de_2^\mathrm{d} \varphi_n^{i,j} \Big) - \frac{\arccos(1-\dn)}{2}  \de_2^\mathrm{d} \varphi_n^{i,j}  \Big| \\
        & \hphantom{\leq \sqrt{\frac{2}{\dn}} } + \Big|  \frac{ \arccos(1-\dn) }{\sqrt{2\dn}}\de_2^\mathrm{d} \varphi_n^{i,j} - \de_2^\mathrm{d} \varphi_n^{i,j} \Big| \\
        & \leq  C \sqrt{\frac{2}{\dn}} \Big|   \frac{1}{2} \arccos(1-\dn) \de_2^\mathrm{d} \varphi_n^{i,j}  \Big|^3  + \Big| \frac{\arccos(1-\dn)}{\sqrt{2 \dn}}  -1 \Big| |\de_2^\mathrm{d} \varphi_n^{i,j}|  \, .
     \end{split}
 \end{equation*}
 By~\eqref{eq:L4 bound on z} we obtain that 
 \begin{equation*}
     \begin{split}
         \| z_n - \de_2^\mathrm{d} \varphi_n \|_{L^1(Q)} & \leq C \frac{\dn^{3/2}}{\sqrt{\dn}}   \| \de_2^\mathrm{d} \varphi_n  \|_{L^3(Q'')}^3  + \Big| \frac{\arccos(1-\dn)}{\sqrt{2 \dn}}  -1 \Big| \|\de_2^\mathrm{d} \varphi_n\|_{L^1(Q'')} \\
         & \leq C \Big( \dn + \Big| \frac{\arccos(1-\dn)}{\sqrt{2 \dn}}  -1 \Big| \Big) \to 0 \, .
     \end{split}
 \end{equation*}
Following the same lines by replacing $z_n$ with $w_n$ and $\de_2^\mathrm{d} \varphi_n$ with the auxiliary variable~$\tilde w_n$ defined in~\eqref{eq:def of tilde w}, we deduce that $\| w_n - \tilde w_n \|_{L^1(Q)} \to 0$. Then~\eqref{eq:de 1 phi and tilde w are close} yields $\| w_n - \de_1^\mathrm{d} \varphi_n \|_{L^1(Q)} \to 0$.

 Putting together~\eqref{eq:gradient of pa close to discrete} and~\eqref{eq:discrete gradient close to w and z} we infer that 
 \begin{equation*} 
     \|\de_1 \hat \varphi_n - w_n \|_{L^1(Q)} \to 0 \, , \quad \|\de_2 \hat \varphi_n - z_n\|_{L^1(Q)} \to 0 \, .
 \end{equation*}
We are now in a position to prove~\eqref{eq2:curl to 0}. Since $\curl ( \nabla \hat \varphi_n) = 0$ in~$\mathcal{D}'(Q)$, for every $\xi \in C^\infty_c(Q)$ we obtain that
\begin{equation*}
 \begin{split}
       \langle \curl(w_n,z_n) , \xi \rangle  & =  - \integral{Q}{  w_n \, \de_2 \xi}{\d x} + \integral{Q}{  z_n \, \de_1 \xi}{\d x} \\
     & = \integral{Q}{  (\de_1 \hat \varphi_n - w_n)\, \de_2 \xi}{\d x} - \integral{Q}{  (  \de_2 \hat \varphi_n - z_n)\, \de_1 \xi}{\d x} \to 0 \, .
 \end{split}
\end{equation*}
\end{proof}

The bounds proven in Proposition~\ref{prop:full MM bound} allow us to  obtain a first partial compactness result. 

\begin{proposition} \label{prop:partial compactness}
    Let $(w_n,z_n) \in \Lloc(\RR^2;\RR^2)$ be a sequence satisfying $H_n(w_n,z_n;\Omega) \leq C$. Then there exists $(w,z) \in BV_{\mathrm{loc}}(\Omega;\RR^2)$ such that $w \in \{1, -1\}$, $z \in \{1, -1\}$ a.e.\ in $\Omega$, $\curl(w,z) = 0$ in $\mathcal{D}'(\Omega)$, and (up to a subsequence) $(w_n,z_n) \to (w,z)$ in $\Lloc(\Omega;\RR^2)$. 
\end{proposition}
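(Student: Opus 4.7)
The plan is to combine Proposition~\ref{prop:full MM bound}, which on any open square $Q \subcc \Omega$ delivers the full discrete Modica--Mortola bound
\begin{equation*}
    \tfrac{1}{\en}\!\integral{Q}{W(w_n)}{\d x} + \en \integral{Q}{|\nabla^{\mathrm{d}} w_n|^2}{\d x} \leq C,
\end{equation*}
and its analogue for $z_n$, together with the asymptotic relation $\curl(w_n,z_n) \weak 0$ in $\mathcal{D}'(Q)$, with a classical Modica--Mortola compactness argument. The crucial point is that the gradient term now controls the \emph{whole} discrete gradient rather than a single partial derivative, so the two scalar components decouple and can be treated independently.

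First, for fixed $Q \subcc \Omega$, I would apply a discrete-to-continuum Modica--Mortola compactness argument in the spirit of~\cite{Bra-Yip} to $w_n$. Concretely, I would pass to the piecewise affine interpolation $\hat w_n$ of $w_n$ on the triangulation $\{T^{\pm}_{\ln}(i,j)\}$ introduced in Step~3 of Lemma~\ref{lemma:all estimates}, check that $\|\hat w_n - w_n\|_{L^1(Q)} \to 0$ and that the continuous Modica--Mortola quantity $\en \|\nabla \hat w_n\|_{L^2(Q)}^2 + \tfrac{1}{\en}\int_Q W(\hat w_n)\,\d x$ inherits the uniform bound from its discrete counterpart, up to lower-order terms vanishing with $\en$. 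Setting $\Phi(s) := \int_0^s (1-t^2)_+\,\d t$, so that $\Phi$ is Lipschitz, bounded, and strictly increasing on $[-1,1]$ with $\Phi(\pm 1) = \pm \tfrac{2}{3}$, Young's inequality yields
\begin{equation*}
    \integral{Q}{|\nabla \Phi(\hat w_n)|}{\d x} \leq \tfrac{1}{2}\integral{Q}{\en |\nabla \hat w_n|^2 + \tfrac{1}{\en} W(\hat w_n)}{\d x} \leq C,
\end{equation*}
so that $\Phi(\hat w_n)$ is equibounded in $BV(Q)$. The compact embedding $BV(Q) \hookrightarrow L^1(Q)$, combined with $W(w_n) \to 0$ in $L^1(Q)$ (which forces a.e.\ $w_n(x) \to \pm 1$ along a subsequence) and with the $L^4$ bound $\|w_n\|_{L^4(Q)} \leq C$ extracted from the potential bound, promotes this to $w_n \to w$ in $L^1(Q)$ for some $w \in \{-1,1\}$ a.e.\ in $Q$; the $BV$ regularity of $w$ follows since $\mathds{1}_{\{w=1\}}$ is an affine function of $\Phi(w)$. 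The same argument applied to $z_n$ yields a further subsequence along which $z_n \to z$ in $L^1(Q)$ with $z \in BV(Q;\{-1,1\})$.

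A standard diagonal extraction along an increasing family of open squares $Q_k \subcc \Omega$ exhausting $\Omega$ then upgrades these convergences to $(w_n,z_n) \to (w,z)$ in $\Lloc(\Omega;\RR^2)$, with $(w,z) \in BV_{\mathrm{loc}}(\Omega;\RR^2)$ and $w,z \in \{-1,1\}$ a.e.\ in $\Omega$. Finally, for any $\xi \in C^\infty_c(\Omega)$ one picks $k$ with $\mathrm{supp}(\xi) \subset Q_k$ and combines the $L^1(Q_k)$ convergence of $(w_n,z_n)$ with Lemma~\ref{lemma:curl to 0} to pass to the limit in $\langle \curl(w_n,z_n), \xi \rangle \to 0$, obtaining $\curl(w,z) = 0$ in $\mathcal{D}'(\Omega)$. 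The main obstacle is the first step, namely transferring the discrete Modica--Mortola bound of Proposition~\ref{prop:full MM bound} to the affine interpolation $\hat w_n$ and handling the fact that $w_n$ is only bounded in $L^4$ (and not in $L^\infty$) through a truncation near $\pm 1$; once this technical reduction is performed, the diagonal extraction and the distributional passage to the limit in the curl are immediate.
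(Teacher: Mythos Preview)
Your proposal is correct and follows essentially the same strategy as the paper's proof: invoke Proposition~\ref{prop:full MM bound} on squares $Q\subcc\Omega$, truncate $w_n$ (and $z_n$) to $[-1,1]$ to obtain bounded functions, pass to the piecewise affine interpolation so that the classical Modica--Mortola compactness applies, check that the piecewise constant and piecewise affine versions are $L^1$-close, and conclude with a diagonal extraction over an exhausting family of squares together with Lemma~\ref{lemma:curl to 0} for the curl constraint. The only point worth flagging is the order of operations: the paper truncates \emph{before} interpolating, precisely because the transfer of the potential term $\tfrac{1}{\en}\int W(\hat w_n)$ from discrete to continuous in~\cite{Bra-Yip} uses that $W$ is Lipschitz on the range of the (bounded) data; your final paragraph already identifies this as the main obstacle and proposes truncation, so you are aligned with the paper's argument.
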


\begin{remark}
    The result stated in the proposition above is still partial and does not yet give the compactness stated in Theorem~\ref{thm:main}-(i). Indeed, in Theorem~\ref{thm:main}-(i) it is stated, in particular, that $(w,z) \in BV(\Omega;\RR^2)$, while Proposition~\ref{prop:partial compactness} only guarantees that $(w,z) \in BV_{\mathrm{loc}}(\Omega;\RR^2)$. We shall see in the proof of Theorem~\ref{thm:main}-(i) below how the  liminf   inequality   will be exploited to deduce the stronger condition $(w,z) \in BV(\Omega;\RR^2)$.
\end{remark}
\begin{proof}[Proof of Proposition~\ref{prop:partial compactness}] We divide the proof into two steps. We start by proving compactness of $w_n$ and $z_n$ in $L^1(Q;\RR^2)$ for every  open  square $Q \subcc \Omega$ and then we obtain compactness in $\Lloc(\Omega;\RR^2)$ via a diagonal procedure.
    
    \ul{Step 1}: Local compactness. Let us fix two  open squares $Q \subcc Q' \subcc \Omega$.  To prove compactness we rely on the compactness results known for the Modica-Mortola functional.  As in~\cite[Section~3.1]{Bra-Yip}, we compare the discrete Modica-Mortola energy of $w_n$ with the classical Modica-Mortola energy  of a suitable Lipschitz modification of $w_n$. To apply the result contained in~\cite[Section~3.1]{Bra-Yip}, we need to work with bounded functions. For this reason we define the truncated functions $\ol w_n := \max\{ \min \{ w_n, 1 \}, -1\}$.  We observe that~\eqref{eq:discrete full MM bound 1} applied to the square $Q'$ yields 
    \begin{equation} \label{eq:truncation is close}
        \begin{split}
            \|\ol w_n - w_n\|_{L^1(Q')} & \leq \hspace{-1em} \integral{Q' \cap \{|w_n| > 1\}}{  \hspace{-1em}|\ol w_n - w_n|  \big(1+|w_n|\big)}{\d x} =   \hspace{-1em} \integral{Q' \cap \{|w_n| > 1\}}{  \hspace{-1em}|\ol w_n - w_n| |\ol w_n + w_n|}{\d x} \\
            & =  \hspace{-1em} \integral{Q' \cap \{|w_n| > 1\}}{  \hspace{-1em}|1 - (w_n)^2| }{\d x} \leq C \bigg( \integral{Q'}{ W(w_n) }{\d x} \bigg)^{\! 1/2} \leq C \en^{1/2} \to 0 \, .
        \end{split}
    \end{equation}
    Moreover 
    \begin{equation*}
        \integral{Q'}{ W(\ol w_n) }{\d x} =  \hspace{-1em}  \integral{Q' \cap \{|w_n| \leq 1\}}{ \hspace{-1em}  W(w_n) }{\d x} \leq \integral{Q'}{ W(w_n) }{\d x} 
    \end{equation*}
    and $|\nabla^\mathrm{d} \ol w_n| \leq |\nabla^\mathrm{d} w_n|$, from which it follows that 
    \begin{equation} \label{eq:0603191809}
        \frac{1}{ \en}  \integral{Q'}{W(\ol w_n )}{\d x}  + \en \integral{Q'}{ \big| \nabla^{\mathrm{d}} \ol w_n  \big|^2}{\d x} \leq C  \, .
    \end{equation}

    We are now in a position to apply the following observation made in~\cite[Section~3.1]{Bra-Yip} (applied here for $\delta = \ln$): if $\hat w_n$ denotes the piecewise affine interpolation of $\ol w_n$ defined as in~\eqref{eq:affine interpolation 1}--\eqref{eq:affine interpolation 2}, then~\eqref{eq:0603191809} implies
        \begin{equation} \label{eq:0603191829}
        \frac{1}{ \en}  \integral{Q}{W(\hat w_n )}{\d x}  + \en \integral{Q}{ \big| \nabla \hat w_n  \big|^2}{\d x} \leq C
    \end{equation}
    for $n$ large enough (with a possibly larger constant $C$). We can now apply to the sequence $\hat w_n \in W^{1,2}(Q)$ the well-known theory of the classical Modica-Mortola  functional~\cite{ModMor,Mod}  
    and deduce that there exists a subsequence (that we do not relabel) and a function $w \in BV(Q)$, $w \in \{1, -1\}$ a.e.\ in $\Omega$, such that $\hat w_n \to w$ in $L^1(Q)$. 
    As already done for the proof of~\eqref{eq:affine and constant}, it is possible to prove that
    \begin{equation*}
        \|\hat w_n - \ol w_n\|_{L^2(Q)}^2 \leq C \ln^2 \|\nabla^\mathrm{d} \ol w_n\|_{L^2(Q')}^2 \leq C \ln \sqrt{\dn} \, ,
    \end{equation*}
    where in the last inequality we applied~\eqref{eq:0603191809} and the fact that $\en = \frac{\ln}{\sqrt{2 \dn}}$. Together with~\eqref{eq:truncation is close}, this implies that $w_n \to w$ in $L^1(Q)$. 

    We follow the previous argument with~$z_n$ in place of $w_n$ to conclude that $z_n \to z$ in $L^1(Q)$ (up to a not relabeled subsequence) for some $z \in BV(Q)$, $z \in \{1, - 1\}$ a.e.\ in $\Omega$. 

    From~\eqref{eq:curl to 0} it follows that $\curl(w,z) = 0$ in $\mathcal{D}'(Q)$.  In conclusion, $(w_n,z_n) \to (w,z)$ in $L^1(Q;\RR^2)$ with $(w,z) \in \Dom(H;Q)$. 

    \ul{Step 2}: Global compactness. Let $(Q_j)_j$ be a countable family of (overlapping) open squares with sides parallel to the coordinate axes such that $\Omega = \bigcup_{j} Q_j$. With a diagonal argument we extract a subsequence from~$n$ (which we do not relabel) such that for every $j$ we have $(w_n,z_n) \to (w,z)$ in $L^1(Q_j;\RR^2)$ with $(w,z) \in \Dom(H;Q_j)$. In particular, $(w_n,z_n) \to (w,z)$ in $\Lloc(\Omega;\RR^2)$ and $w \in \{1, -1 \}$, $z \in \{1, -1\}$ a.e.\ in $\Omega$. From a standard partition of unity argument we conclude that $(w,z) \in BV_{\mathrm{loc}}(\Omega;\RR^2)$ and $\curl(w,z) = 0$ in~$\mathcal{D}'(\Omega)$.
\end{proof}

\begin{proof}[Proof of Theorem~\ref{thm:main}-(ii)]
    Let $(w_n,z_n) \in \Lloc(\RR^2;\RR^2)$ be a sequence such that $(w_n, z_n) \to (w,z)$ in $\Lloc(\Omega;\RR^2)$ and let us prove that 
    \begin{equation} \label{eq:liminf}
        H(w,z;\Omega) \leq \liminf_{n \to \infty} H_n(w_n,z_n;\Omega) \, .
    \end{equation}
    Up to the extraction of a subsequence (that we do not relabel) we can assume that the $\liminf$ in the right-hand side is actually a limit and that it is finite (the inequality being otherwise trivial). In particular, $H_n(w_n,z_n;\Omega) \leq C$ for some constant $C$. By Proposition~\ref{prop:partial compactness} we infer that $(w,z) \in BV_{\mathrm{loc}}(\Omega;\RR^2)$, $w \in \{1,-1\}$ and $z \in \{1, -1\}$ a.e.\ in $\Omega$, and $\curl(w,z) = 0$ in~$\mathcal{D}'(\Omega)$. 
 
    To prove the result we resort to a slicing argument in the coordinate directions, which reduces the problem to the one-dimensional setting  studied in~\cite{CicSol}. Given a function $u(x_1,x_2)$ defined for $(x_1,x_2) \in \RR^2$ we define $u^{x_2}(x_1) := u(x_1,x_2)$ and given a set $A$ we consider its slices $A^{x_2} := \{x_1 \in \RR \ : \ (x_1,x_2) \in A\}$. If $u$ is a piecewise constant function, then we also adopt the notation $(u^{x_2})^i := u^{x_2}(\ln i)$, identifying functions defined on the lattice $\ln \ZZ$ with piecewise constant functions on the real line. For a given open set $U \subset \RR$, we introduce the set of indices  
    \begin{equation*}
        \I^n_1(U)  := \{ i \in \ZZ \ : \ \ln i + [0,\ln] \, , \ln (i+1) + [0,\ln] \subset U \} \, ,
    \end{equation*}
    and the one-dimensional discrete energy
    \begin{equation*}
        H_n^{\mathrm{1d}}(v;U) := \frac{1}{\sqrt{2} \ln \dn^{3/2} }  \frac{1}{2}  \ \ln  \sum_{i \in \I^n_1(U)} \Big| v^{i+2} - \frac{\alpha_n}{2} v^{i+1}  + v^{i} \Big|^2 
    \end{equation*}
    defined for $v \colon \ln \ZZ \to \SS^1$. 

    Let $u_n \in \PC_{\ln}(\SS^1)$ be such that $(w_n,z_n) = T_n(u_n)$ and let us fix an open set $A \subcc \Omega$. Since $(w_n,z_n) \to (w,z)$ in $L^1(A;\RR^2)$, by Fubini's Theorem there exists a subsequence (not relabeled) such that for $\L^1$-a.e.\ $x_2 \in \RR$ we have
    \begin{equation} \label{eq:convergence of slice}
        w_n^{x_2} \to w^{x_2} \quad \text{in } L^1(A^{x_2}) \, .
    \end{equation}

    For $n$ large enough we have the inequality
    \begin{equation*}
        \begin{split}
            \Hh_n(w_n,z_n;\Omega) & =\frac{1}{\sqrt{2} \ln \dn^{3/2} }  \frac{1}{2}  \ \ln^2 \hspace{-1em}  \sum_{(i,j) \in \I^n(\Omega)} \Big| u_n^{i+2,j} - \frac{\alpha_n}{2} u_n^{i+1,j}  + u_n^{i,j} \Big|^2 \\
            & \geq  \integral{\RR}{ H_n^{\mathrm{1d}}(u_n^{x_2};A^{x_2}) }{\d x_2}  \, .
        \end{split}
    \end{equation*}
    By Fatou's Lemma we obtain that 
    \begin{equation} \label{eq:Fatou}
             \liminf_{n \to \infty}\Hh_n(w_n,z_n;\Omega) \geq \integral{\RR}{\liminf_{n\to \infty} H_n^{\mathrm{1d}}(u_n^{x_2};A^{x_2}) }{\d x_2} \, .
    \end{equation}
    Let us fix an $x_2 \in \RR$ which satisfies~\eqref{eq:convergence of slice} and such that $w^{x_2} \in BV(A^{x_2};\{1,-1\})$ (this property of $w$ is true for $\L^1$-a.e.\ $x_2$).
    Let~$(I^{x_2}_\ell)_{\ell = 1}^\infty$ be a countable family of pairwise disjoint open intervals such that $ \bigcup_\ell I^{x_2}_\ell = A^{x_2}$. Let $L \in \NN$. Then $H_{n}^{\mathrm{1d}}(u_{n}^{x_2};A^{x_2}) \geq \sum_{\ell = 1}^L H_{n}^{\mathrm{1d}}(u_{n}^{x_2}; I^{x_2}_\ell)$. For each $\ell=1,\dots, L$ we apply the one-dimensional result for the  liminf inequality of the discrete functionals $H_{n}^{\mathrm{1d}}(\, \cdot \, ; I^{x_2}_\ell)$ proven in~\cite[Theorem~4.2]{CicSol} to deduce that 
    \begin{equation*}
        \liminf_{n \to \infty} H_{n}^{\mathrm{1d}}(u_{n}^{x_2};A^{x_2}) \geq \sum_{\ell = 1}^L  \frac{8}{3} \# ( J_{w^{x_2}} \cap  I^{x_2}_\ell ) = \sum_{\ell = 1}^L  \frac{4}{3} |\D (w^{x_2})|(I^{x_2}_\ell ) \, .
    \end{equation*} 
    For the inequality above we exploited the convergence $w^{x_2}_{n} \to w^{x_2}$ in $L^1(A^{x_2})$ due to~\eqref{eq:convergence of slice}. We recall that the topology used  for the $\Gamma$-convergence in~\cite{CicSol} is the strong $L^1$ topology with respect to this order parameter. (We remark that the result in~\cite{CicSol} is proven under additional periodic boundary conditions on the spin field variable; nonetheless the boundary conditions do not play any role in the  liminf  inequality in the regime we are interested in, i.e., \cite[Theorem 4.2-(i)]{CicSol}, cf.\ also~\cite[Remark~3.1]{CicSol}. Moreover, the result in~\cite{CicSol} is extended with minor modifications to the case where the domain is a generic interval of finite length.) By letting $L \to \infty$, by~\eqref{eq:Fatou}, and by~\cite[Theorem~3.103]{AmbFusPal} we infer that 
    \begin{equation*}
        \liminf_{n \to \infty}\Hh_n(w_n,z_n;\Omega) \geq \integral{\RR}{\frac{4}{3} |\D (w^{x_2})|(A^{x_2} )}{\d x_2} = \frac{4}{3} |\D_1 w|(A) \, .
    \end{equation*}
    Letting $A \nearrow \Omega$, we conclude that 
    \begin{equation*}
        \liminf_{n \to \infty}\Hh_n(w_n,z_n;\Omega) \geq \frac{4}{3} |\D_1 w|(\Omega) \, .
    \end{equation*}

Following the same steps we prove that 
\begin{equation*}
    \liminf_{n \to \infty}\Hv_n(w_n,z_n;\Omega) \geq \frac{4}{3} |\D_2 z|(\Omega) \, .
\end{equation*}
Using the decomposition $H_n = \Hh_n + \Hv_n$ we conclude the proof of~\eqref{eq:liminf}. 

Note that $(w,z) \in \Dom(H;\Omega)$. Indeed, the fact that $|\D_1 w|(\Omega) + |\D_2 z|(\Omega) < \infty$ and Proposition~\ref{prop:bootstrap} imply $(w,z) \in BV(\Omega;\RR^2)$.
\end{proof}

We are now in a position to conclude the proof of the compactness result.

\begin{proof}[Proof of Theorem~\ref{thm:main}-(i)]
    Let $(w_n,z_n) \in \Lloc(\RR^2;\RR^2)$ be such that $H_n(w_n,z_n;\Omega) \leq C$. By Proposition~\ref{prop:partial compactness} there exists a subsequence (not relabeled) such that $(w_n,z_n) \to (w,z)$ in $\Lloc(\Omega;\RR^2)$, where $(w,z) \in BV_{\mathrm{loc}}(\Omega;\RR^2)$, $w \in \{1,-1\}$ and $z \in \{1,-1\}$ a.e.\ in $\Omega$, and $\curl(w,z) = 0$ in $\mathcal{D}'(\Omega)$. To conclude the proof of the compactness result, we need to prove that $(w,z) \in BV(\Omega;\RR^2)$. This follows from the same argument in the proof of Theorem~\ref{thm:main}-(ii): by the  liminf  inequality we have that 
\begin{equation*}
     \frac{4}{3}\Big( |\D_1 w|(\Omega) + |\D_2 z|(\Omega) \Big) \leq \liminf_{n \to \infty}H_n(w_n,z_n;\Omega)\leq C < \infty \, .
\end{equation*}
Then Proposition~\ref{prop:bootstrap} implies that $(w,z) \in BV(\Omega;\RR^2)$.
\end{proof}

\section{Proof of the  limsup  inequality}

In this section we prove the limsup  inequality, i.e., Theorem~\ref{thm:main}-(iii). Let us fix $\Omega \in \A_0$ and $(w,z) \in \Lloc(\RR^2;\RR^2)$ and let us prove that there exists a sequence $(w_n,z_n) \to (w,z)$ in $L^1(\Omega;\RR^2)$ such that 
\begin{equation} \label{eq:gamma limsup}
    \limsup_{n \to \infty} H_n(w_n , z_n ;\Omega)   \leq H(w,z;\Omega) \, .
\end{equation}
If $(w,z) \notin \Dom(H;\Omega)$ the statement is trivial, thus we assume $(w,z) \in \Dom(H;\Omega)$ in what follows. According to Lemma~\ref{lemma:existence of potential}, $(w,z)$ admits a potential $\varphi  \in BVG(\Omega)$. As it turns out from our construction below, it will be easier to work on the function $\varphi$ for the definition of the recovery sequence.

Relying on the idea that the functionals $H_n$ resemble a discrete version of second-order Modica-Mortola functionals, we resort to a technique proposed in~\cite{Pol} to prove upper bounds for generic singular perturbation problems of the form 
\begin{equation*}
    \frac{1}{\en} \integral{\Omega}{F(\en \nabla^2 \varphi(x), \nabla \varphi(x))}{\d x} \, .
\end{equation*}
Specifically, we shall apply~\cite[Theorems~6.1, 6.2]{Pol} to the sequence of functionals 
\begin{equation} \label{eq:second order MM}
     \integral{\Omega}{\frac{1}{\en} W(\de_1 \varphi) + \frac{1}{\en} W(\de_2 \varphi) + \en |\de_{11} \varphi|^2 + \en |\de_{22}\varphi|^2}{\d x} \, ,
\end{equation}
i.e., to the case 
\begin{equation} \label{eq:def of F}
    F(A,b) = W(b_1) + W(b_2) + a_{11}^2 + a_{22}^2 \quad \text{for } A = (a_{ij})_{i,j=1,2} \in \RR^{2\x 2}, \ b = (b_1,b_2 )\in \RR^2 , 
\end{equation} 
where $W$ is the classical double-well potential given by $W(s) := (1-s^2)^2$. Before proving~\eqref{eq:gamma limsup}, we recall that the technique proposed in~\cite{Pol} uses a sequence of mollifications of $\varphi$ to obtain a candidate for the recovery sequence. This leads to an asymptotic upper bound for the functionals in~\eqref{eq:second order MM} which depends on the choice of the mollifier. Subsequently, the limsup inequality is obtained by optimizing the upper bound over all admissible mollifiers.

To define a mollification of $\varphi$ on $\Omega$ we first extend it to the whole $\RR^2$.  Since $\Omega$ is a $BVG$ domain, by Proposition~\ref{prop:extension of BVG} we can find a compactly supported function $\ol \varphi \in BVG(\RR^2)$ such that $\ol \varphi = \varphi$ a.e.\ in $\Omega$ and $|\D \nabla \varphi|(\de \Omega) = 0$. 

\begin{remark}
    In order to apply~\cite{Pol}, it is not required that the extension $\ol \varphi$ satisfies the condition $\nabla \ol \varphi \in \{1, -1\}^2$. We stress that, in general, it is not possible to extend $\varphi \in BVG(\Omega)$ with $\nabla \varphi \in \Dom(H;\Omega)$ to a function $\ol \varphi \in BVG(\tilde \Omega)$ with $\nabla \ol \varphi \in \Dom(H;\tilde \Omega)$, for some $\tilde \Omega \supcc \Omega$, cf.\ Figure~\ref{fig:nonextendable}.
\end{remark}


\begin{figure}[H]
    \includegraphics{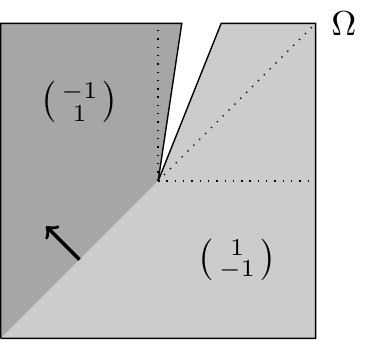}

    \caption{Example of a pair $(w,z)$ attaining the values $(1,-1)$ and $(-1,1)$ defined in the open set $\Omega$ given by the wedged square. It is not possible to extend $(w,z)$ to the full square to an admissible pair $(\tilde w, \tilde z)$.}
    \label{fig:nonextendable}
\end{figure}

We define a sequence $\varphi^\e$ by convolving $\varphi$ with suitable kernels. Following~\cite{Pol}, we introduce the class $\V(\Omega)$ consisting of mollifiers $\eta \in C^3_c(\RR^2 \x \RR^2; \RR)$ satisfying 
\begin{equation} \label{eq:eta has integral 1}
    \integral{\RR^2}{\eta(z,x)}{\d z} = 1 \quad \text{for all } x \in \Omega \, .
\end{equation}
\begin{remark}
    In~\cite{Pol} the author uses a slightly different class of mollifiers, only requiring a~$C^2$ regularity. We remark that the proofs of~\cite[Theorem~6.1, Theorem~6.2]{Pol} also work under this stronger regularity assumption on the convolution kernels. 
\end{remark}
Let us fix a mollifier $\eta \in \V(\Omega)$ and let us define  
    \begin{equation} \label{eq:def of varphieps}
        \varphi^{\e}(x) := \frac{1}{\e^2} \integral{\RR^2}{\eta \big( \tfrac{y-x}{\e} , x\big) \ol \varphi(y)}{\d y}  =   \integral{\RR^2}{\eta ( z , x ) \ol \varphi( x + \e z)}{\d z} \quad \text{for } x \in \RR^2 .
    \end{equation}
Evaluating the sequence of functionals in~\eqref{eq:second order MM} at the functions $\varphi^\en$, we obtain a first asymptotic upper bound. More precisely, by~\cite[Theorem~6.1]{Pol} one has that 
    \begin{equation} \label{eq:Poliakovsky applied}
        \lim_{n \to \infty } \integral{\Omega}{ \sum_{k=1}^2 \frac{1}{\en} W(\de_k \varphi^{\en}) + \en |\de_{kk} \varphi^{\en}|^2 }{\d x}  = Y[\eta](\varphi) \, ,
    \end{equation}
where an explicit formula for $Y[\eta](\varphi)$ is given in~\cite[Formula~(6.4)]{Pol}. The precise expression of $Y[\eta](\varphi)$ is not relevant for our purposes. It is however important to derive the expression obtained when we optimize $Y[\eta](\varphi)$ with respect to $\eta \in \V(\Omega)$.

\begin{proposition} \label{prop:optimization in eta}
    The following equality holds true:
    \begin{equation*} 
        \inf_{\eta \in \V(\Omega)} Y[\eta](\varphi) = H(\de_1 \varphi, \de_2 \varphi;\Omega) = H(w, z;\Omega) \, .
    \end{equation*}
\end{proposition}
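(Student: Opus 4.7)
The plan is to invoke Poliakovsky's optimization result \cite[Theorem~6.2]{Pol} to rewrite $\inf_{\eta \in \V(\Omega)} Y[\eta](\varphi)$ as an interfacial integral over $J_{\nabla \varphi}$ whose density is the optimal one-dimensional transition cost for $F$, and then to evaluate this density explicitly by exploiting the additive splitting of $F$ in \eqref{eq:def of F} together with the rigid list of possible jump triples \eqref{eq:possible values of normal}. Since $\nabla\varphi = (w,z)$ takes only the four values in $\{-1,1\}^2$, the singular part of $\D\nabla\varphi$ reduces to its jump part; moreover, $J_w \cup J_z \subset J_{\nabla\varphi}$ and the normals $\nu_w$, $\nu_z$ coincide up to sign with $\nu_{\nabla\varphi}$ on $J_w$, $J_z$ respectively. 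Together with $H(w,z;\Omega) = \frac{4}{3}(|\D_1 w|(\Omega) + |\D_2 z|(\Omega))$, the proposition reduces to the computation of a surface density.

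By \cite[Theorem~6.2]{Pol}, the infimum equals $\int_{J_{\nabla\varphi}} K_F(\nabla\varphi^+,\nabla\varphi^-,\nu_{\nabla\varphi}) \, \d\H^1$, where
\begin{equation*}
    K_F(a^+,a^-,\nu) := \inf\bigg\{\int_\RR F\big(p'(t)\otimes\nu,\,p(t)\big)\,\d t \ : \ p \in C^1(\RR;\RR^2),\ \lim_{t \to \pm\infty} p(t) = a^\pm\bigg\}.
\end{equation*}
The specific choice of $F$ in \eqref{eq:def of F} gives $F(p'\otimes\nu,\,p) = W(p_1) + \nu_1^2 (p_1')^2 + W(p_2) + \nu_2^2 (p_2')^2$, and the minimum decouples as
\begin{equation*}
    K_F(a^+,a^-,\nu) = M(a_1^+,a_1^-,|\nu_1|) + M(a_2^+,a_2^-,|\nu_2|),
\end{equation*}
where $M(b^+,b^-,s) := \inf\{\int_\RR W(q) + s^2 (q')^2 \, \d t \ : \ q(\pm\infty) = b^\pm\}$. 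Since $b^\pm \in \{-1,1\}$ throughout and $W(\sigma) = (1-\sigma^2)^2$, the classical Modica--Mortola estimate yields $M(b,b,s) = 0$ and, when $\{b^+,b^-\} = \{-1,1\}$,
\begin{equation*}
    M(b^+,b^-,s) = 2s \int_{-1}^{1} \sqrt{W(\sigma)} \, \d\sigma = 2s \int_{-1}^{1} (1-\sigma^2) \, \d\sigma = \tfrac{8s}{3} = \tfrac{4}{3} |b^+ - b^-| \, s.
\end{equation*}

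Combining the above, at $\H^1$-a.e.\ $x \in J_{\nabla\varphi}$ one obtains $K_F(\nabla\varphi^+,\nabla\varphi^-,\nu_{\nabla\varphi}) = \tfrac{4}{3}\bigl(|[w]|\,|\nu_{\nabla\varphi}^1| + |[z]|\,|\nu_{\nabla\varphi}^2|\bigr)$, and integration over $J_{\nabla\varphi}$ produces $\tfrac{4}{3}(|\D_1 w|(\Omega) + |\D_2 z|(\Omega)) = H(w,z;\Omega)$, as claimed. The main obstacle is the correct identification of the density produced by \cite[Theorem~6.2]{Pol} with the unconstrained one-dimensional transition cost $K_F$, and in particular the admissibility of profiles $p$ whose two components can be minimized independently. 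Should the abstract formula enforce a gradient-type constraint of the form $p(t) - p(-\infty) \parallel \nu$, the rigid list \eqref{eq:possible values of normal} makes it automatically compatible: in the axis-parallel cases only one component transitions and the constraint is trivial, while in the diagonal cases $a^+ - a^-$ is itself parallel to $\nu$, so the ansatz $p(t) = a^- + h(t)\nu$ (reduced to a scalar Modica--Mortola via $q = -1 + h/\sqrt{2}$) realizes the value $\tfrac{8\sqrt{2}}{3}$, matching the decoupled bound.
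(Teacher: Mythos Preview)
Your computation is correct in substance, but you should be definite rather than conditional about the density coming from \cite[Theorem~6.2]{Pol}: it \emph{is} the constrained scalar cost
\[
\sigma(a,b,\nu)=\inf_{\gamma\in C^1(\RR)}\int_\RR F\bigl(-\gamma'(t)\,\nu\otimes\nu,\ \gamma(t)\nu+b\bigr)\,\d t
\]
(with $a-b=\chi\nu$), not your unconstrained vector-valued $K_F$. Hence the decoupled computation in the body of your argument gives a priori only the trivial inequality $K_F\le\sigma$. The actual content sits in your last paragraph, which establishes $\sigma\le K_F$ case by case: for the axis-parallel triples the unconstrained optimizer (one component constant at $\pm1$, the other a standard Modica--Mortola profile) already has the form $\gamma\nu+b$, and for the diagonal triples the ansatz $p=a^-+h(t)\nu$ with $q=-1+h/\sqrt{2}$ reduces to $\int_\RR 2W(q)+(q')^2\,\d t$, whose infimum $2\sqrt{2}\cdot\tfrac{4}{3}=\tfrac{8\sqrt{2}}{3}$ matches the decoupled value. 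This is exactly the paper's route (its Case~1 and Case~2), only packaged differently: the paper works with the scalar $\gamma$ from the outset and computes $\sigma$ directly in each case, whereas you first evaluate the unconstrained lower bound and then verify it is attained under the constraint. Restructure accordingly---state $\sigma$ as the Poliakovsky density, use your decoupling for the lower bound on $\sigma$, and promote your final paragraph to the matching upper bound---and the proof is complete.
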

\begin{proof}
    We recall that \cite[Theorem~6.2]{Pol} gives 
    \begin{equation*}
        \inf_{\eta \in \V(\Omega)} Y[\eta](\varphi) = \integral{J_{\nabla \varphi}}{ \sigma\big(\nabla \varphi^+(x), \nabla \varphi^-(x), \nu_{\nabla \varphi}(x) \big) }{\d \H^1(x)} \, ,
    \end{equation*}
    where the surface density $\sigma$ is obtained by optimizing the energy for a transition from $\nabla \varphi^-(x)$ to $\nabla \varphi^+(x)$  over one-dimensional profiles and is given by
    \begin{equation*}
        \begin{split}
            \sigma(a,b,\nu) := \inf_{\gamma} \Big\{  \int \limits_{-\infty}^{+\infty} \!  F\big( - \gamma'(t) \, \nu  \otimes \nu , \gamma(t) \,  \nu + b\big) \d t \ : \ \gamma \in C^1(\RR) \, , \text{ there exists } L > 0 & \\[-1em]
            \text{ s.t.\ for } t \geq L  \text{ we have } \gamma(-t) = \chi \text{ and } \gamma(t) = 0 & \Big\}
        \end{split}
    \end{equation*}
    for every $a, b \in \RR^2$ and $\nu \in \SS^1$ such that $(a-b) = \chi  \, \nu$ for some $\chi \in \RR$. This exhaustively defines the energy for the triple $\big(\nabla \varphi^+(x), \nabla \varphi^-(x), \nu_{\nabla \varphi} (x) \big)$ at $\H^1$-a.e.\ $x \in J_{\nabla \varphi}$, cf.\ Subsection~\ref{subsec:BV gradients}. 

    We claim that for $\H^1$-a.e.\ $x \in J_{\nabla \varphi} = J_{(w,z)}$ it holds
    \begin{equation} \label{eq:claim on sigma}
        \sigma\big( \nabla \varphi^+(x), \nabla \varphi^-(x), \nu_{\nabla \varphi}(x) \big) = \frac{4}{3} \Big( | w^+(x)-w^-(x) | |\nu^1_{w}(x)| + |z^+(x) - z^-(x)| |\nu^2_{z}(x)| \Big) \, . 
    \end{equation}
    By the definition of $H$ in~\eqref{eq:def of H}, this will conclude the proof. Notice that $\nu_{w}(x) = \nu_{\nabla \varphi}(x)$ for $\H^1$-a.e.\ $x \in J_{w}$ and $\nu_{z}(x) = \nu_{\nabla \varphi}(x)$ for $\H^1$-a.e.\ $x \in J_{z}$.

    By Subsection~\ref{subsec:rigid jump part}, for $\H^1$-a.e.\ $x \in J_{\nabla \varphi}$ the triple $(\nabla \varphi^+(x), \nabla \varphi^-(x), \nu_{\nabla \varphi}(x))$ can attain only the following values (up to a permutation of $(\nabla \varphi^+(x), \nabla \varphi^-(x))$ and a change of sign of $\nu_{\nabla \varphi}(x)$) 
    \begin{equation*}
        \begin{split}
            & \Big(  \vc{1}{1}, \vc{-1}{1} , \vc{\pm 1}{0}  \Big) \, , \Big(\vc{1}{-1},  \vc{-1}{-1} , \vc{\pm 1}{0}  \Big)  \, , \Big( \vc{1}{1}, \vc{1}{-1} , \vc{0}{\pm 1}  \Big)  \, , \Big( \vc{-1}{1}, \vc{-1}{-1} ,  \vc{0}{\pm 1}  \Big)  \, , \\
            & \Big( \vc{1}{1}, \vc{-1}{-1} ,  \vc{\pm 1/\sqrt{2}}{\pm 1/\sqrt{2}}  \Big) \, , \Big(\vc{-1}{1} , \vc{1}{-1},   \vc{\mp 1/\sqrt{2}}{\pm 1/\sqrt{2}}  \Big) \, .
        \end{split}
    \end{equation*}
    Let us fix $x \in J_{\nabla \varphi}$ such that $\big( \nabla \varphi^+(x), \nabla \varphi^-(x), \nu_{\nabla \varphi}(x) \big) = (a,b,\nu)$, with $(a,b,\nu)$ being one of the triples above. For simplicity, we will exhibit the value of $\sigma(a,b,\nu)$ for  $(a,b,\nu) = \Big( \vc{1}{1} , \vc{-1}{1}, \vc{1}{0}  \Big)$ and for   $ (a,b,\nu) = \Big( \vc{1}{1} , \vc{-1}{-1}, \vc{1/\sqrt{2}}{1/\sqrt{2}}  \Big)$. All the other cases are treated similarly.
    
    {\em Case 1:} $(a,b,\nu) = \Big( \vc{1}{1} , \vc{-1}{1}, \vc{1}{0}  \Big)$, and thus $a-b = \chi \, \nu$ with $\chi = 2$. Let us fix an admissible profile $\gamma \in C^1(\RR)$ such that $\gamma(-t) = 2$  and $\gamma(t) = 0$ for $t \geq L$. Then, by~\eqref{eq:def of F} we have 
    \begin{equation*}
        \int \limits_{-\infty}^{+\infty} \! F\big( - \gamma'(t) \,  \nu  \otimes \nu , \gamma(t) \, \nu + b\big) \d t = \int \limits_{-\infty}^{+\infty} \! W(\gamma(t) - 1) + |\gamma'(t)|^2 \d t \, .
    \end{equation*}
    This implies  that the infimum problem that defines $\sigma(a,b,\nu)$ coincides with the infimum problem for the optimal profile in the one-dimensional Modica-Mortola functional. It is well known that $\sigma(a,b,\nu)= \frac{8}{3}$, see~\cite[Remark~6.1]{Bra}, and therefore~\eqref{eq:claim on sigma} holds true.
    
    {\em Case 2:} $(a,b,\nu) = \Big( \vc{1}{1} , \vc{-1}{-1}, \vc{1/\sqrt{2}}{1/\sqrt{2}}  \Big)$, and thus $a-b = \chi \, \nu$ with $\chi = 2 \sqrt{2}$.
    Let $\gamma \in C^1(\RR)$ be an admissible profile such that $\gamma(-t) = 2\sqrt{2}$  and $\gamma(t) = 0$ for $t \geq L$. Putting $\tilde \gamma(s) := \gamma(\tfrac{s}{\sqrt{2}})/\sqrt{2}$, $s \in \RR$, by~\eqref{eq:def of F} we infer that  
    \begin{equation*}
        \begin{split}
            \int \limits_{-\infty}^{+\infty} \! F\big( - \gamma'(t) \,  \nu  \otimes \nu , \gamma(t) \, \nu + b\big) \d t & = \int \limits_{-\infty}^{+\infty} \! 2 \, W\big(\gamma(t)\tfrac{1}{\sqrt{2}} - 1\big) + \tfrac{1}{2}|\gamma'(t)|^2 \d t \\
            & =  \sqrt{2}  \int \limits_{-\infty}^{+\infty} \! W\big(\tilde \gamma(s) - 1\big) + |\tilde \gamma'(s)|^2 \d s \, .
        \end{split}
    \end{equation*}
    Note that $\tilde \gamma(-s) = 2$  and $\tilde \gamma(s) = 0$ for $s \geq L \sqrt{2}$. Thus, up to the multiplicative factor~$\sqrt{2}$, the infimum problem that defines $\sigma(a,b,\nu)$ coincides with the infimum problem for the optimal profile of the Modica-Mortola functional. In conclusion $\sigma(a,b,\nu) = \sqrt{2} \tfrac{8}{3}$ and again~\eqref{eq:claim on sigma} holds true. 
\end{proof} 

    Thanks to Proposition~\ref{prop:optimization in eta}, to prove~\eqref{eq:gamma limsup} it will be enough to  construct a sequence $(w_n,z_n) = \T_n(u_n) \in \T_n(\PC_{\ln}(\SS^1))$ such that $(w_n,z_n) \to (w,z)$ strongly in $L^1(\Omega;\RR^2)$ and 
\begin{equation} \label{eq:almost gamma limsup}
    \limsup_{n \to \infty} H_n(w_n,z_n;\Omega) \leq Y[\eta](\varphi) \, .
\end{equation}
The final statement~\eqref{eq:gamma limsup} is then obtained by a diagonal argument. 

In order to define $(w_n,z_n)$, we discretize on the lattice $\ln \ZZ^2$ the sequence~$\varphi^{\e_n}$. Specifically, we define $\varphi_n \in \PC_{\ln}(\RR)$ by 
    \begin{equation} \label{eq:recovery phi}
        \varphi_n^{i,j} := \varphi^{\en}(\ln i, \ln j) \, .
    \end{equation}
We start by comparing the second-order Modica-Mortola energy of $\varphi^{\en}$ with its discrete counterpart computed on $\varphi_n$. 

    \begin{proposition} \label{prop:discrete MM}
    For $k=1,2$ we have that 
        \begin{equation} \label{eq:discrete Poliakovsky}
             \integral{\Omega}{ \frac{1}{\en} W(\de_k \varphi^{\en}(x)) + \en |\de_{kk} \varphi^{\en}(x)|^2 }{\d x} = \integral{\Omega}{\frac{1}{\en} W(\de_k^{\mathrm{d}} \varphi_n(x)) + \en |\de_{kk}^{\mathrm{d}} \varphi_n(x)|^2 }{\d x} + o(1)  \, ,
        \end{equation}
        where $o(1) \to 0$ as $n \to \infty$.
    \end{proposition}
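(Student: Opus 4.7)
The plan is to Taylor-expand the smooth function $\varphi^\en \in C^3(\RR^2)$ (smoothness being inherited from $\eta \in C^3_c$) and to exploit that the discrete derivatives of the sampled function $\varphi_n$ are exact integral averages of the continuous derivatives of $\varphi^\en$: on each lattice square $Q_\ln(i,j) \subset \Omega$ one has
\begin{equation*}
\de_k^{\mathrm{d}}\varphi_n(x) = \frac{1}{\ln}\int_0^\ln \de_k\varphi^\en\bigl((\ln i, \ln j) + te_k\bigr)\,dt,
\end{equation*}
\begin{equation*}
\de_{kk}^{\mathrm{d}}\varphi_n(x) = \frac{1}{\ln^2}\int_0^\ln\!\!\int_0^\ln \de_{kk}\varphi^\en\bigl((\ln i, \ln j) + (s+t)e_k\bigr)\,ds\,dt,
\end{equation*}
so that the differences with $\de_k\varphi^\en$ and $\de_{kk}\varphi^\en$ can be estimated in terms of higher derivatives of $\varphi^\en$.

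First I would handle the potential term. Since $W$ is locally Lipschitz on the uniformly bounded images of $\de_k^{\mathrm{d}}\varphi_n$ and $\de_k\varphi^\en$ (bounded by a constant depending only on $\|\nabla\ol\varphi\|_{L^\infty}$ and on $\eta$), the problem reduces to controlling $\frac{1}{\en}\|\de_k^{\mathrm{d}}\varphi_n - \de_k\varphi^\en\|_{L^1(\Omega)}$. The integral representation above combined with Fubini and the uniform bound $\|\nabla^2 \varphi^\en\|_{L^1(\RR^2)} \leq |\D\nabla\ol\varphi|(\RR^2) < \infty$ (which follows from $\ol\varphi \in BVG(\RR^2)$ and Young's inequality) yields
\begin{equation*}
\tfrac{1}{\en}\|\de_k^{\mathrm{d}}\varphi_n - \de_k\varphi^\en\|_{L^1(\Omega)} \leq \tfrac{C\ln}{\en}\,|\D\nabla\ol\varphi|(\RR^2) = C\sqrt{2\dn} \to 0.
\end{equation*}

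For the gradient term I would factor $A^2 - B^2 = (A-B)(A+B)$ and apply Cauchy--Schwarz to obtain
\begin{equation*}
\en \int_\Omega \bigl| |\de_{kk}^{\mathrm{d}}\varphi_n|^2 - |\de_{kk}\varphi^\en|^2 \bigr|\,dx \leq 2\sqrt{\en\|\de_{kk}\varphi^\en\|_{L^2(\RR^2)}^2}\cdot\sqrt{\en\|\de_{kk}^{\mathrm{d}}\varphi_n - \de_{kk}\varphi^\en\|_{L^2(\Omega)}^2}.
\end{equation*}
By Jensen, $\|\de_{kk}^{\mathrm{d}}\varphi_n\|_{L^2}$ is dominated by $\|\de_{kk}\varphi^\en\|_{L^2}$, and the first factor remains bounded thanks to \eqref{eq:Poliakovsky applied}, since the full energy of $\varphi^\en$ converges to the finite quantity $Y[\eta](\varphi)$. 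For the second factor, a one-dimensional Poincar\'e-type estimate for convolution with the triangular kernel implicit in the representation of $\de_{kk}^{\mathrm{d}}\varphi_n$ gives $\|\de_{kk}^{\mathrm{d}}\varphi_n - \de_{kk}\varphi^\en\|_{L^2(\Omega)}^2 \leq C\ln^2\|\de_{kkk}\varphi^\en\|_{L^2(\RR^2)}^2$.

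The hard part will be the refined bound $\|\de_{kkk}\varphi^\en\|_{L^2(\RR^2)}^2 \leq C/\en^3$: writing $\de_{kkk}\varphi^\en = \de_k \rho_\en * \de_{kk}\ol\varphi$, Young's inequality alone yields only the insufficient $O(1/\en^4)$, whereas the correct scaling is obtained by exploiting that $\de_{kk}\ol\varphi$ is a Radon measure concentrated on a $1$-rectifiable set (the jump set of $\de_k\ol\varphi$, which inside $\Omega$ coincides with the jump set of $w$ or $z$ and can be arranged to retain this structure in a neighborhood of $\Omega$by a suitable refinement of Proposition~\ref{prop:extension of BVG}); differentiation transverse to this lower-dimensional support yields a gain of one factor of $\en$, as is readily verified on the Fourier side. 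Combining all estimates then gives $\en\|\de_{kk}^{\mathrm{d}}\varphi_n - \de_{kk}\varphi^\en\|_{L^2(\Omega)}^2 \leq C\ln^2/\en^2 = 2C\dn \to 0$, which together with the potential-term estimate proves \eqref{eq:discrete Poliakovsky}.
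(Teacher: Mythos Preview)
Your treatment of the potential term is essentially the paper's argument: uniform boundedness of $\de_k\varphi^{\en}$ and $\de_k^{\mathrm d}\varphi_n$, local Lipschitz continuity of $W$, and an $O(\ln)$ bound on $\|\de_k^{\mathrm d}\varphi_n-\de_k\varphi^{\en}\|_{L^1(\Omega)}$ coming from the second derivatives of $\varphi^{\en}$, leading to the decay $C\ln/\en=C\sqrt{2\dn}\to 0$. (A small warning: the bound you invoke is not literally $\|\nabla^2\varphi^{\en}\|_{L^1}\le|\D\nabla\ol\varphi|(\RR^2)$ via Young, but rather the summed-sup estimate $\sum_{Q_{\ln}(i,j)\cap\Omega\neq\emptyset}\ln^2\sup_{Q^2_{\ln}(i,j)}|\nabla^2\varphi^{\en}|\le C$, which is what the pointwise comparison actually needs.)

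The gradient term, however, has a genuine gap. Your Cauchy--Schwarz route requires $\|\de_{kkk}\varphi^{\en}\|_{L^2}^2\le C/\en^3$, and you propose to obtain this by exploiting that $\D_k\de_k\ol\varphi$ is concentrated on a $1$-rectifiable set, to be ``arranged in a neighborhood of $\Omega$ by a suitable refinement of Proposition~\ref{prop:extension of BVG}''. Such a refinement is not available: the paper explicitly remarks (see the Remark and Figure~\ref{fig:nonextendable} immediately following the introduction of $\ol\varphi$) that one cannot in general extend $\varphi$ so that $\nabla\ol\varphi\in\{-1,1\}^2$ outside $\Omega$. The extension guaranteed by Proposition~\ref{prop:extension of BVG} is merely a $BVG$ function, so $\D\nabla\ol\varphi$ may well have absolutely continuous and Cantor parts outside $\Omega$, and your rectifiability-based gain of one power of $\en$ breaks down. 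Also note that the kernel $\eta(z,x)$ depends on $x$, so the identity $\de_{kkk}\varphi^{\en}=\de_k\rho_{\en}*\de_{kk}\ol\varphi$ is not exact; additional terms of the same critical order appear.

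The paper circumvents this entirely by pairing an $L^\infty$ bound on third derivatives with an $L^1$-type bound on second derivatives, rather than $L^2$ on both. From the convolution formula one gets $\sup_{\RR^2}|\nabla^3\varphi^{\en}|\le C/\en^2$ using only $\ol\varphi\in W^{1,\infty}$ (no structure of $\D\nabla\ol\varphi$ needed), while the summed-sup estimate on $|\nabla^2\varphi^{\en}|$ uses only $|\D\nabla\ol\varphi|(\RR^2)<\infty$. Then on each square
\[
\bigl||\de_{kk}\varphi^{\en}|^2-|\de_{kk}^{\mathrm d}\varphi_n|^2\bigr|\le 2\sup_{Q^2_{\ln}(i,j)}|\nabla^2\varphi^{\en}|\cdot\sqrt{5}\,\ln\sup_{Q^2_{\ln}(i,j)}|\nabla^3\varphi^{\en}|,
\]
and summing gives $\en\cdot C\ln\cdot\en^{-2}\cdot\sum\ln^2\sup|\nabla^2\varphi^{\en}|\le C\ln/\en\to 0$. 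This $L^\infty\times L^1$ splitting is the key device you are missing.
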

    \begin{proof}
        \ul{Step 1}: We claim that there exists a constant $C_0 > 0$ such that 
        \begin{align}
            \big| \de_k \varphi^{\en}(x) \big|& \leq C_0 \, , \label{eq:d of phi bounded} \\ 
            \big |\de^{\mathrm{d}}_k \varphi_n(x) \big | & \leq C_0 \, ,\label{eq:discrete d of phi bounded}
        \end{align}
        for every $x \in \RR^2$, $k=1,2$, and every $n$.
    Indeed, by definition of $\varphi^{\en}$ in~\eqref{eq:def of varphieps} we infer that 
    \begin{equation*}
        \begin{split}
            \big| \de_k \varphi^{\en}(x) \big| 
            & \leq \integral{\RR^2}{ | \de_{x_k} \eta ( z , x ) \ol \varphi( x + \en z)|}{\d z} +  \integral{\RR^2}{| \eta ( z , x ) \de_k \ol \varphi( x + \en z) | }{\d z} \\
            & \leq C \|\nabla \eta\|_{L^\infty} \|\ol \varphi\|_{L^\infty} + C \| \eta \|_{L^\infty} \| \nabla \ol \varphi \|_{L^\infty} =: C_0 \, .
        \end{split}
    \end{equation*}
    Moreover, for $k=1$ we get
    \begin{equation} \label{eq:1912181456}
        |\de^{\mathrm{d}}_1 \varphi_n^{i,j}|  = \Big| \frac{\varphi^{\en}(\ln(i+1),\ln j) - \varphi^{\en}(\ln i,\ln j)}{\ln} \Big| \leq  \int \limits_0^1 \! \big| \de_1 \varphi^{\en}(\ln(i+t),\ln j) \big| \d t \leq C_0 \, .
\end{equation} 
With analogous computations we prove the estimate on $|\de^{\mathrm{d}}_2 \varphi_n^{i,j}|$.

        \ul{Step 2}: We prove the following control on the second and third derivatives of $\varphi^{\en}$: setting $Q^2_{\ln}(i,j) := (\ln i , \ln j) + [0,2 \ln)^2$, we show that 
        \begin{gather}
        \sum_{Q_{\ln}(i,j) \cap \Omega \neq \emptyset}  \ln^2 \sup_{Q^2_{\ln}(i,j)} \! \! |\nabla^2 \varphi^{\en}| \leq C \, , \label{eq:control on second derivatives}\\
        \sup_{\RR^2} |\nabla^3 \varphi^{\en}| \leq \frac{C}{\en^2} \, , \label{eq:control on third derivatives}
        \end{gather} 
        for some constant $C$.

        From the very definition of $\varphi^{\en}$ in~\eqref{eq:def of varphieps} and recalling that $\nabla \ol \varphi \in BV(\RR^2;\RR^2)$ we get 
        \begin{equation} \label{eq:second derivative of phi}
            \begin{split}
                \de_{hk} \varphi^{\en}(x) & = \frac{1}{\en^2} \integral{\RR^2}{\eta(\tfrac{y-x}{\en},x)}{\d \D_{h}\de_{k} \ol \varphi(y) } + \frac{1}{\en^2} \integral{\RR^2}{\de_{x_h x_k} \eta(\tfrac{y-x}{\en},x)\ol \varphi(y)}{\d y} \\
                &\quad + \frac{1}{\en^2} \integral{\RR^2}{\de_{x_h} \eta(\tfrac{y-x}{\en},x) \de_{k} \ol \varphi(y)}{\d y} +  \frac{1}{\en^2} \integral{\RR^2}{\de_{x_k} \eta(\tfrac{y-x}{\en},x) \de_{h} \ol \varphi(y)}{\d y}  
            \end{split}
        \end{equation}
        and 
        \begin{equation} \label{eq:third derivative of phi}
            \begin{split}
                \de_{hk \ell} \varphi^{\en}(x) & = \frac{1}{\en^4} \integral{\RR^2}{\de_{z_h z_\ell }\eta(\tfrac{y-x}{\en},x) \de_{k} \ol \varphi(y)}{\d y }  - \frac{1}{\en^3} \integral{\RR^2}{\de_{z_h x_\ell }\eta(\tfrac{y-x}{\en},x) \de_{k} \ol \varphi(y)}{\d y } \\
                & \quad - \frac{1}{\en^3} \integral{\RR^2}{\de_{z_\ell x_h x_k} \eta(\tfrac{y-x}{\en},x)\ol \varphi(y)}{\d y}  + \frac{1}{\en^2} \integral{\RR^2}{\de_{x_h x_k x_\ell} \eta(\tfrac{y-x}{\en},x)\ol \varphi(y)}{\d y} \\
                &\quad - \frac{1}{\en^3} \integral{\RR^2}{\de_{z_\ell x_h} \eta(\tfrac{y-x}{\en},x) \de_{k} \ol \varphi(y)}{\d y} + \frac{1}{\en^2} \integral{\RR^2}{\de_{x_h x_\ell} \eta(\tfrac{y-x}{\en},x) \de_{k} \ol \varphi(y)}{\d y}   \\
                & \quad - \frac{1}{\en^3} \integral{\RR^2}{ \de_{z_\ell x_k} \eta(\tfrac{y-x}{\en},x) \de_{h} \ol \varphi(y)}{\d y} + \frac{1}{\en^2} \integral{\RR^2}{ \de_{x_k x_\ell } \eta(\tfrac{y-x}{\en},x) \de_{h} \ol \varphi(y)}{\d y} \, ,
            \end{split}
        \end{equation}
        where $\de_{z_h} \eta(z,x)$ and $\de_{x_h} \eta(z,x)$ denote the derivative with respect to the $h$-th variable in the first and second group of variables of $\eta(z,x)$ respectively and $\D_h \de_{k} \ol \varphi$ denotes the $h$-th component of the distributional derivative of $\de_{k} \ol \varphi$. 

        By the assumptions on $\eta$, the function $y \mapsto \eta\big(\tfrac{y - x}{\en},x\big)$ is supported on a ball $B_{R \en}(x)$ for a suitable $R > 0$ (independent of $n$ and $x$). Together with the condition $\ol \varphi \in W^{1,\infty}(\RR^2)$, \eqref{eq:third derivative of phi} yields~\eqref{eq:control on third derivatives}.

        We remark that integrating by parts~\eqref{eq:second derivative of phi} yields the bound $\sup_{\RR^2} |\nabla^2 \varphi^{\en}| \leq \frac{C}{\en}$. However, this estimate is too weak to guarantee~\eqref{eq:control on second derivatives}. The proof of~\eqref{eq:control on second derivatives} requires a finer argument. Using~\eqref{eq:second derivative of phi} and the fact that the function $y \mapsto \eta\big(\tfrac{y - x}{\en},x \big)$ is supported on a ball~$B_{R \en}(x)$, we observe that for every $x \in  Q^2_{\ln}(i,j)$ 
        \begin{equation*}
             |\nabla^2 \varphi^{\en}(x)| \leq  \|\eta\|_{\infty} \frac{1}{\en^2} |\D \nabla \ol \varphi|\big(B_{R \en}(x)\big) + C
        \end{equation*}
        and therefore
        \begin{equation*}
            \sup_{ Q^2_{\ln}(i,j)} \! \! |\nabla^2 \varphi^{\en}| \leq C  \Big[ 1 + \frac{1}{\en^2} |\D \nabla \ol \varphi|\big( Q^2_{\ln}(i,j) + B_{R \en}\big) \Big] \, .
        \end{equation*}
        Summing over all $(i,j) \in \ZZ^2$ such that $ Q_{\ln}(i,j) \cap \Omega \neq \emptyset$, we get the bound
        \begin{equation} \label{eq:1502191748}
            \begin{split}
                \sum_{ Q_{\ln}(i,j) \cap \Omega \neq \emptyset}  \ln^2 \sup_{ Q^2_{\ln}(i,j)} \! \! |\nabla^2 \varphi^{\en}| & \leq  \sum_{ Q_{\ln}(i,j) \cap \Omega \neq \emptyset}  \ln^2  C  \Big[ 1 + \frac{1}{\en^2} |\D \nabla \ol \varphi|\big( Q^2_{\ln}(i,j) + B_{R \en}\big) \Big] \\
                & \leq C + \frac{\ln^2}{\en^2} \sum_{(i,j) \in \ZZ^2} |\D \nabla \ol \varphi|\big( Q^2_{\ln}(i,j) + B_{R \en}\big) \, .
            \end{split}
        \end{equation}
  To estimate the right-hand side, we observe that for every $(i,j) \in \ZZ^2$ 
        \begin{equation*}
            |\D \nabla \ol \varphi|\big( Q^2_{\ln}(i,j) + B_{R \en}\big) \leq \hspace{-3em}\sum_{\substack{(i',j')\in \ZZ^2 \\ Q_{\ln}(i',j') \cap ( Q^2_{\ln}(i,j) + B_{R \en}) \neq \emptyset}} \hspace{-3em} |\D \nabla \ol \varphi|\big(Q_{\ln}(i',j')\big)
        \end{equation*}
        and thus 
        \begin{equation*}
            \frac{\ln^2}{\en^2} \sum_{(i,j) \in \ZZ^2} |\D \nabla \ol \varphi|\big( Q^2_{\ln}(i,j) + B_{R \en}\big) \leq \frac{\ln^2}{\en^2}  \sum_{(i',j') \in \ZZ^2} \sum_{(i,j) \in \mathcal{N}_{n}(i',j')} |\D \nabla \ol \varphi|\big(Q_{\ln}(i',j')\big) \, ,
        \end{equation*} 
        where $\mathcal{N}_{n}(i',j') = \{(i,j) \in \ZZ^2 \ : \  Q_{\ln}(i',j') \cap ( Q^2_{\ln}(i,j) + B_{R \en}) \neq \emptyset \}$. A simple counting argument shows that $\# \mathcal{N}_{n}(i',j') \leq C (\en \ln^{-1})^2$ (where we used that $\en / \ln \to \infty$), thus 
        \begin{equation*}
            \frac{\ln^2}{\en^2} \sum_{(i,j) \in \ZZ^2} |\D \nabla \ol \varphi|\big( Q^2_{\ln}(i,j) + B_{R \en}\big)  \leq C |\D \nabla \ol \varphi|(\RR^2) \, . 
        \end{equation*}
        Together with~\eqref{eq:1502191748}, this concludes the proof of~\eqref{eq:control on second derivatives}.

        \ul{Step 3}: We show that 
        \begin{equation*}
             \frac{1}{\en} \integral{\Omega}{ \big| W(\de_k \varphi^{\en}(x)) - W(\de_k^{\mathrm{d}} \varphi_n(x)) \big| }{\d x}   \to 0 \, ,
        \end{equation*} 
        for $k = 1,2$. For the sake of simplicity, we prove the claim for $k=1$, the case $k=2$ being analogous. 

        We start by observing that for every $x \in Q_{\ln}(i,j)$
        \begin{equation} \label{eq:discretization error in derivative}
            \begin{split}
               \big|\de_1 \varphi^{\en}(x) - \de_1^{\mathrm{d}} \varphi_n(x)  \big| & = \big|\de_1 \varphi^{\en}(x) - \de_1^{\mathrm{d}} \varphi_n^{i,j}  \big| \leq  \int \limits_0^1 \! \big| \de_1 \varphi^{\en}(x) - \de_1 \varphi^{\en}(\ln(i+t), \ln j)  \big|   \d t  \\
                 & \leq \sup_{ Q_{\ln}(i,j)}|\nabla^2 \varphi^{\en}| \sqrt{2} \ln  \, .
            \end{split}
        \end{equation}
        Setting $L := \sup_{|s|\leq C_0} |W'(s)|$, by~\eqref{eq:d of phi bounded}--\eqref{eq:discrete d of phi bounded} we obtain that 
        \begin{equation*}
                \big| W(\de_1 \varphi^{\en}(x)) - W(\de_1^{\mathrm{d}} \varphi_n(x)) \big|  \leq L \big|\de_1 \varphi^{\en}(x) - \de_1^{\mathrm{d}} \varphi_n(x)  \big|  \leq L \sup_{ Q_{\ln}(i,j)}|\nabla^2 \varphi^{\en}| \sqrt{2} \ln  \, .
        \end{equation*}
        Summing over all indices $(i,j)$ such that the square $ Q_{\ln}(i,j)$ intersects $\Omega$, by~\eqref{eq:control on second derivatives} we get 
        \begin{equation*}
            \begin{split}
                \frac{1}{\en} \integral{\Omega}{ \big| W(\de_1 \varphi^{\en}(x)) - W(\de_1^{\mathrm{d}} \varphi_n(x)) \big| }{\d x} & \leq  \sqrt{2}L  \frac{\ln }{\en} \sum_{ Q_{\ln}(i,j) \cap \Omega \neq \emptyset} \ln^2 \sup_{ Q_{\ln}(i,j)}|\nabla^2 \varphi^{\en}|  \\
                & \leq C \frac{\ln }{\en} = C \sqrt{2 \dn} \to 0 \, .
            \end{split}
        \end{equation*}

         \ul{Step 4}: We show that 
         \begin{equation*}
              \en  \integral{\Omega}{ \big| |\de_{kk} \varphi^{\en}(x)|^2 - |\de^{\mathrm{d}}_{kk} \varphi_n(x)|^2 \big| }{\d x}   \to 0 \, ,
         \end{equation*} 
         for $k = 1,2$. Also in this case we prove the claim only for $k=1$. 

         For every $x \in Q_{\ln}(i,j)$ we have that 
         \begin{equation*}
             \de_{11}^{\mathrm{d}}\varphi_n(x) = \int \limits_0^1 \! \int \limits_0^1 \! \de_{11}\varphi^{\en}(\ln(i+s+t),\ln j) \d s \d t
         \end{equation*}
         and thus, noting that $|x - (\ln (i+s+t), \ln j)| \leq \sqrt{5} \ln$,
         \begin{equation*}
            \begin{split}
                \big| |\de_{11} \varphi^{\en}(x)|^2 - |\de^{\mathrm{d}}_{11} \varphi_n(x)|^2 \big| & =    \big| \de_{11} \varphi^{\en}(x) + \de^{\mathrm{d}}_{11} \varphi_n(x) \big| \big| \de_{11} \varphi^{\en}(x) - \de^{\mathrm{d}}_{11} \varphi_n(x) \big| \\
                & \leq 2\!\! \sup_{ Q^2_{\ln}(i,j)} \! \! |\nabla^2 \varphi^{\en}| \ \ \sup_{ Q^2_{\ln}(i,j)} |\nabla^3 \varphi^{\en}|  \sqrt{5} \ln \, .
            \end{split}
         \end{equation*}
         Summing over all indices $(i,j)$ such that the square $ Q_{\ln}(i,j)$ intersects $\Omega$, by~\eqref{eq:control on second derivatives}--\eqref{eq:control on third derivatives} we get 
         \begin{equation*} 
            \begin{split}
                & \en  \integral{\Omega}{ \big| |\de_{kk} \varphi^{\en}(x)|^2 - |\de^{\mathrm{d}}_{kk} \varphi_n(x)|^2 \big| }{\d x} \\
                & \quad \leq  2 \sqrt{5} \en \ln \!\!\! \sum_{ Q_{\ln}(i,j) \cap \Omega \neq \emptyset}  \ln^2 \sup_{ Q^2_{\ln}(i,j)} \! \! |\nabla^2 \varphi^{\en}| \ \ \sup_{ Q^2_{\ln}(i,j)} |\nabla^3 \varphi^{\en}| \\
                & \quad \leq C\frac{\ln}{\en}  = C \sqrt{2 \dn} \to 0 \, .
            \end{split}
        \end{equation*}
        This concludes the proof.
    \end{proof}

    We are now in a position to define the sequence $(w_n,z_n)$. To this end, it is convenient to introduce the spin field $u_n \in \PC_{\ln}(\SS^1)$ defined by
    \begin{equation}\label{recovery-spin}
        u_n^{i,j} := \Big(\cos\Big(\frac{1}{\ln} \arccos(1-\dn) \varphi_n^{i,j} \Big), \sin\Big(\frac{1}{\ln} \arccos(1-\dn) \varphi_n^{i,j} \Big) \Big) \, .
    \end{equation}
    Then we define
    \begin{equation} \label{eq:def of wn zn}
        (w_n,z_n) := \T_n(u_n)  \in \T_n(\PC_{\ln}(\SS^1)) \, ,
    \end{equation}
    i.e., through formula~\eqref{eq:order parameter}. The spin field in \eqref{recovery-spin} is constructed in such a way that the following expression for the angles between neighboring spins holds true.  
    \begin{lemma}
        Let $u_n \in \PC_{\ln}(\SS^1)$ be defined as in \eqref{recovery-spin}. Then, it holds that
        \begin{equation} \label{eq:theta and discrete derivative}
            (\theth_n)^{i,j} = \arccos(1-\dn) \de^{\mathrm{d}}_1 \varphi_n^{i,j} \, , \quad         (\thetv_n)^{i,j} = \arccos(1-\dn) \de^{\mathrm{d}}_2 \varphi_n^{i,j} \, ,
        \end{equation}
        for $n$ large enough. Moreover, 
        \begin{equation} \label{eq:theta goes to 0}
            (\theth_n)^{i,j} \to 0  \quad \text{and} \quad (\thetv_n)^{i,j} \to 0
        \end{equation}
        as $n \to \infty$, uniformly in $(i,j)$.  
    \end{lemma}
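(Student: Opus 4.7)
The plan is to compute directly the scalar and cross products of neighboring spins using the explicit formula~\eqref{recovery-spin} and then invoke the uniform $L^\infty$ bound on $\nabla^{\mathrm{d}}\varphi_n$ already established in Proposition~\ref{prop:discrete MM} to ensure that the angle reconstructed from $u^\sigma \cdot u^{\sigma'}$ and $u^\sigma \times u^{\sigma'}$ through the formulas~\eqref{eq:def of theth}--\eqref{eq:def of thetv} is indeed in the principal branch.

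More precisely, set $\beta_n := \arccos(1-\dn)/\ln$, so that $u_n^{i,j} = (\cos(\beta_n \varphi_n^{i,j}), \sin(\beta_n \varphi_n^{i,j}))$. Using the addition formulas for $\cos$ and $\sin$, an immediate computation gives
\begin{equation*}
   u_n^{i,j} \cdot u_n^{i+1,j} = \cos\!\big(\beta_n \ln\, \de^{\mathrm{d}}_1 \varphi_n^{i,j}\big) = \cos\!\big(\arccos(1-\dn)\, \de^{\mathrm{d}}_1 \varphi_n^{i,j}\big)
\end{equation*}
and analogously $u_n^{i,j} \times u_n^{i+1,j} = \sin\!\big(\arccos(1-\dn)\, \de^{\mathrm{d}}_1 \varphi_n^{i,j}\big)$. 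The same identities hold in the vertical direction with $\de^{\mathrm{d}}_2 \varphi_n^{i,j}$.

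The key step is to justify that the angle $t_n^{i,j} := \arccos(1-\dn)\, \de^{\mathrm{d}}_1 \varphi_n^{i,j}$ lies in the principal branch $[-\pi,\pi)$ uniformly in $(i,j)$ for $n$ large. Indeed, the bound~\eqref{eq:discrete d of phi bounded} in the proof of Proposition~\ref{prop:discrete MM} yields $|\de^{\mathrm{d}}_k \varphi_n^{i,j}| \leq C_0$ independently of $n$ and $(i,j)$. Since $\arccos(1-\dn) \to 0$, this immediately gives the uniform bound $|t_n^{i,j}| \leq C_0 \arccos(1-\dn) \to 0$, proving~\eqref{eq:theta goes to 0}, and in particular $|t_n^{i,j}| < \pi$ for $n$ large enough.

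With $t_n^{i,j} \in (-\pi,\pi)$ secured, the identity $\mathrm{sign}(\sin t)\arccos(\cos t) = t$ valid for $t \in (-\pi,\pi)$ (together with the convention $\mathrm{sign}(0)=-1$, which is consistent since both sides vanish at $t=0$) yields
\begin{equation*}
    (\theth_n)^{i,j} = \mathrm{sign}(u_n^{i,j} \times u_n^{i+1,j})\arccos(u_n^{i,j}\cdot u_n^{i+1,j}) = t_n^{i,j} = \arccos(1-\dn)\, \de^{\mathrm{d}}_1 \varphi_n^{i,j},
\end{equation*}
and the analogous formula for $(\thetv_n)^{i,j}$. The only subtlety in the whole argument is the need to restrict to the principal branch, but this is handled entirely by the uniform $L^\infty$ bound on $\nabla^{\mathrm{d}}\varphi_n$ inherited from the regularity of the mollified potential $\varphi^{\en}$.
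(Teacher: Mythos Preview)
Your proof is correct and follows essentially the same approach as the paper: both arguments compute the dot and cross products of neighboring spins via the addition formulas, invoke the uniform bound~\eqref{eq:discrete d of phi bounded} from Proposition~\ref{prop:discrete MM} to force the angle into the principal branch, and then apply the identity $\mathrm{sign}(\sin t)\arccos(\cos t)=t$ on $(-\pi,\pi)$. Your write-up is slightly more explicit in spelling out the trigonometric computation and in noting the consistency of the $\mathrm{sign}(0)=-1$ convention at $t=0$, but the structure and ingredients are the same.
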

    \begin{proof}
        We only prove the statement for $(\theth_n)^{i,j}$, the conclusion for $(\thetv_n)^{i,j}$ being analogous. 
     From~\eqref{eq:discrete d of phi bounded} we deduce that 
    \begin{equation} \label{eq:discrete derivative goes to 0}
        |\arccos(1-\dn) \de^{\mathrm{d}}_1 \varphi_n^{i,j}|  \leq \arccos(1-\dn) C_0 \leq \pi
    \end{equation}
    for $n$ large enough and for all $(i,j)$. In particular, from the very definition of~$\theth_n$ in~\eqref{eq:def of theth} and using standard trigonometric identities it follows that
\begin{equation*}
    \begin{split}
        (\theth_n)^{i,j} & = \mathrm{sign}\Big( \sin\big( \arccos(1-\dn) \de^{\mathrm{d}}_1 \varphi_n^{i,j} \big) \Big) \arccos\Big( \cos\big( \arccos(1-\dn) \de^{\mathrm{d}}_1 \varphi_n^{i,j} \big) \Big) \\
        & = \arccos(1-\dn) \de^{\mathrm{d}}_1 \varphi_n^{i,j} \, .
    \end{split}
\end{equation*}

The convergence in~\eqref{eq:theta goes to 0} follows now from the first inequality in \eqref{eq:discrete derivative goes to 0}.
    \end{proof}

    In the next proposition we prove the convergence of the recovery sequence. 
    \begin{proposition} \label{prop:convergence of w,z}
        Let $(w,z) \in \Dom(H;\Omega)$ and let $(w_n,z_n)$ be defined by \eqref{recovery-spin}--\eqref{eq:def of wn zn}. Then $(w_n,z_n) \to (w,z)$ strongly in $L^1(\Omega;\RR^2)$. 
    \end{proposition}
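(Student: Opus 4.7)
The plan is to express $(w_n,z_n)$ explicitly in terms of the discrete partial derivatives of $\varphi_n$ and then compare these to the continuous partial derivatives of the mollified extension $\varphi^{\en_n}$, which in turn converge to $(w,z)$.

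First, I would combine \eqref{eq:order parameter} with \eqref{eq:theta and discrete derivative} to write
\[
w_n = \sqrt{\tfrac{2}{\dn}}\sin\bigl(\tfrac{1}{2}\arccos(1-\dn)\,\de_1^{\mathrm{d}}\varphi_n\bigr),
\]
and analogously for $z_n$. By \eqref{eq:discrete d of phi bounded}, the argument of $\sin$ is uniformly bounded by $\tfrac{C_0}{2}\arccos(1-\dn)$, which tends to $0$. A Taylor expansion $\sin(s)=s+O(s^3)$ then yields, uniformly in $(i,j)$,
\[
w_n = \frac{\arccos(1-\dn)}{\sqrt{2\dn}}\,\de_1^{\mathrm{d}}\varphi_n + O(\dn).
\]
Since $\arccos(1-\dn)/\sqrt{2\dn}\to 1$ and $\de_1^{\mathrm{d}}\varphi_n$ is uniformly bounded, this reduces matters to proving that $\de_k^{\mathrm{d}}\varphi_n \to \de_k\varphi$ in $L^1(\Omega)$ for $k=1,2$.

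Second, I would factor this convergence through the mollified function $\varphi^{\en_n}$. Writing $\de_1^{\mathrm{d}}\varphi_n(x) = \int_0^1 \de_1\varphi^{\en_n}(\ln i + t\ln, \ln j)\,\d t$ for $x \in Q_{\ln}(i,j)$ and exploiting the bound $\sup_{\RR^2}|\nabla^2\varphi^{\en_n}| \leq C/\en_n$ (obtained from \eqref{eq:second derivative of phi} by integration by parts against the $BV$ measure $\D\nabla\ol\varphi$, together with $\|\ol\varphi\|_{W^{1,\infty}} < \infty$), I would infer
\[
\|\de_k^{\mathrm{d}}\varphi_n - \de_k\varphi^{\en_n}\|_{L^\infty(\Omega)} \leq C\ln_n/\en_n = C\sqrt{2\dn_n}\to 0.
\]

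Third, to pass from $\varphi^{\en_n}$ to $\varphi$, I would differentiate \eqref{eq:def of varphieps} to obtain
\[
\de_k\varphi^{\en}(x) = \int \eta(z,x)\,\de_k\ol\varphi(x+\en z)\,\d z + \int \de_{x_k}\eta(z,x)\,\ol\varphi(x+\en z)\,\d z.
\]
Differentiating \eqref{eq:eta has integral 1} in $x_k$ annihilates the mean of $\de_{x_k}\eta(\cdot,x)$, so the second integral equals $\int \de_{x_k}\eta(z,x)(\ol\varphi(x+\en z)-\ol\varphi(x))\,\d z$, which is $O(\en)$ uniformly in $x$ by the Lipschitz continuity of $\ol\varphi$. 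The first integral is a generalized mollification of $\de_k\ol\varphi \in L^\infty(\RR^2)$; it converges to $\de_k\ol\varphi(x)$ at every Lebesgue point, and dominated convergence upgrades this to convergence in $L^1(\Omega)$. Since $\nabla\ol\varphi = (w,z)$ a.e.\ in $\Omega$, chaining the three estimates gives $(w_n,z_n) \to (w,z)$ in $L^1(\Omega;\RR^2)$. No serious obstacle is expected: the only mildly delicate point is the $L^\infty$ bound on $\nabla^2\varphi^{\en_n}$, which relies crucially on the $BV$ regularity of $\nabla\ol\varphi$ (mere continuity would give $C/\en_n^2$, which is insufficient).
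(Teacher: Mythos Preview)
Your proposal is correct and follows the same three-step chain as the paper: (i) $w_n - \de_1^{\mathrm{d}}\varphi_n \to 0$, (ii) $\de_1^{\mathrm{d}}\varphi_n - \de_1\varphi^{\en} \to 0$, (iii) $\de_1\varphi^{\en} \to \de_1\varphi = w$ in $L^1(\Omega)$. The only noteworthy difference is in step (ii): you use the crude pointwise bound $\sup_{\RR^2}|\nabla^2\varphi^{\en}| \leq C/\en$ to obtain $L^\infty$-convergence at rate $\ln/\en = \sqrt{2\dn}$, whereas the paper invokes the finer $L^1$-type estimate \eqref{eq:control on second derivatives} together with \eqref{eq:discretization error in derivative} to get $\|\de_1\varphi^{\en} - \de_1^{\mathrm{d}}\varphi_n\|_{L^1(\Omega)} \leq C\ln$. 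Your route is the simpler one here, since the pointwise $C/\en$ bound (which the paper itself remarks is available but ``too weak'' for the purposes of Proposition~\ref{prop:discrete MM}) is entirely sufficient for this convergence statement; the paper presumably reuses \eqref{eq:control on second derivatives} only because it has already been established.
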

    \begin{proof}
         We only prove that $w_n \to w$ in $L^1(\Omega)$, the proof of $z_n \to z$ in $L^1(\Omega)$ being analogous.

        We recall that, by~\eqref{eq:def of wn zn}, \eqref{eq:theta and discrete derivative}, and since $(w,z) = \nabla \varphi$, we have that
         \begin{align*}
             w_n^{i,j} & = \sqrt{\frac{2}{\dn}} \sin \Big( \frac{1}{2} \arccos(1-\dn) \de_1^\mathrm{d} \varphi_n^{i,j} \Big) \, , \\
            w(x) & = \de_1 \varphi(x) \, .
         \end{align*}
        
        We start by observing that $\de_1 \varphi^{\en} \to w$ in $L^1(\Omega)$. Indeed, from~\eqref{eq:eta has integral 1} we deduce that 
        \begin{equation*}
            \de_1 \varphi(x) = \integral{\RR^2}{\de_{x_1} \eta(z,x) \ol \varphi(x) + \eta(z,x) \de_1 \ol \varphi(x) }{\d z} \, , \quad \text{for } x \in \Omega \, .
        \end{equation*}
        Together with~\eqref{eq:def of varphieps}, this yields  
        \begin{equation*}
            \begin{split}
               &  \integral{\Omega}{|\de_1 \varphi^{\en}(x) - \de_1 \varphi(x)|}{\d x}\\
                & \leq \integral{\Omega}{ \integral{\RR^2}{|\de_{x_1} \eta(z,x)|\ |\ol \varphi(x+\en z) - \ol \varphi(x)| }{\d z} }{\d x} + \integral{\Omega}{ \integral{\RR^2}{|\eta(z,x)|\ |\de_1 \ol \varphi(x+\en z) - \de_1 \ol \varphi(x)| }{\d z} }{\d x} \\
                & \leq \|\nabla \eta \|_{L^\infty} \integral{B_R}{ \|\ol \varphi(\, \cdot  +\en z) - \ol \varphi\|_{L^1(\Omega)} }{\d z}  + \|\eta \|_{L^\infty} \integral{B_R}{ \|\de_1 \ol \varphi(\, \cdot +\en z) - \de_1 \ol \varphi\|_{L^1(\Omega)} }{\d z} \to 0 
            \end{split}
        \end{equation*} 
        as $n\to \infty$, where $R>0$ is a radius (independent of $n$ and $x$) such that $z \mapsto \eta(z,x)$ is supported in $B_R$ and we used the continuity of translations of $L^1$ functions.

    The bounds~\eqref{eq:control on second derivatives} and~\eqref{eq:discretization error in derivative} already proven in Proposition~\ref{prop:discrete MM} let us deduce that 
        \begin{equation*}
            \|\de_1 \varphi^{\en} - \de_1^\mathrm{d} \varphi_n\|_{L^1(\Omega)} \leq C \ln \to 0 \, , \quad \text{as } n \to \infty \, .
        \end{equation*} 

        Hence, to conclude we need to show that $\|w_n - \de_1^\mathrm{d} \varphi_n \|_{L^1(\Omega)} \to 0$. This is a consequence of~\eqref{eq:discrete d of phi bounded} and of the fact that the sequence of functions $s \mapsto \sqrt{\tfrac{2}{\dn}} \sin \big( \tfrac{1}{2}\arccos(1-\dn) s \big)$ converges locally uniformly to the identity $s \mapsto s$ as $n \to \infty$.
    \end{proof}

    At this stage of the proof, we start to compare the discrete second-order Modica-Mortola energy in~\eqref{eq:discrete Poliakovsky} with the energy $H_n(w_n,z_n;\Omega)$. For this comparison, it is convenient to introduce the $n$-dependent double-well potentials defined by 
    \begin{equation*}
        \tilde W_n(s)  := \Big( 1 - \frac{2}{\dn} \sin^2 \Big( \frac{\arccos(1-\dn)}{2} s \Big) \Big)^2  .
    \end{equation*}
    These potentials are defined in such a way that 
    \begin{equation} \label{eq:W on w and z}
        W(w_n^{i,j}) = \tilde W_n(\de_1^{\mathrm{d}} \varphi_n^{i,j}) 
        \quad \text{and} \quad 
        W(z_n^{i,j}) = \tilde W_n(\de_2^{\mathrm{d}} \varphi_n^{i,j}) \, ,
    \end{equation}
    where $W(s) = (1-s^2)^2$. Note that 
\begin{equation} \label{eq:bound from below of W}
    W(s) \geq \tilde W_n(s) \quad \text{ for every } s \in \RR \, .
\end{equation}
This can be proven, e.g., through the inequality $W\big( \sqrt{\tfrac{2}{\dn}} \sin t \big) \leq W\big( \tfrac{2}{\arccos(1-\dn)} t\big)$, $t \in \RR$.

\begin{proposition}
    Let $\varphi_n$ as in~\eqref{eq:recovery phi} and let $(w_n,z_n)$ be defined by \eqref{recovery-spin} and \eqref{eq:def of wn zn}. Then we have that 
    \begin{equation} \label{eq:bound from below with H}
        \begin{aligned}
            \integral{\Omega}{\frac{1}{\en} W(\de_1^{\mathrm{d}} \varphi_n(x)) + \en |\de_{11}^{\mathrm{d}} \varphi_n(x)|^2 }{\d x}   & \geq (1-r_n) \Hh_n(w_n,z_n;\Omega) \,, \\
            \integral{\Omega}{\frac{1}{\en} W(\de_2^{\mathrm{d}} \varphi_n(x)) + \en |\de_{22}^{\mathrm{d}} \varphi_n(x)|^2 }{\d x}  &  \geq (1-r_n) \Hv_n(w_n,z_n;\Omega) \, ,
        \end{aligned}
    \end{equation}
    where $r_n \to 0$ as $n \to \infty$.
\end{proposition}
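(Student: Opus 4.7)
The strategy is to invoke Lemma~\ref{lemma:H is a discrete MM} to rewrite $\Hh_n(w_n,z_n;\Omega)$ as the sum of a potential term in $w_n$ and a gradient term in $\de_1^{\mathrm{d}}w_n$, and to compare each of the two contributions with the matching piece of $\int_\Omega \frac{1}{\en}W(\de_1^{\mathrm{d}}\varphi_n)+\en|\de_{11}^{\mathrm{d}}\varphi_n|^2\,\d x$ via pointwise (in $(i,j)$) estimates. The vertical inequality will follow by the same reasoning with $\de_1^{\mathrm{d}},\de_{11}^{\mathrm{d}},w_n$ replaced by $\de_2^{\mathrm{d}},\de_{22}^{\mathrm{d}},z_n$.

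For the potential term, identity~\eqref{eq:W on w and z} combined with $\tilde W_n\leq W$ from~\eqref{eq:bound from below of W} gives $W(w_n^{i,j})=\tilde W_n(\de_1^{\mathrm{d}}\varphi_n^{i,j})\leq W(\de_1^{\mathrm{d}}\varphi_n^{i,j})$. Each lattice site appears at most twice in the sum $\sum_{\I^n(\Omega)}(W(w_n^{i,j})+W(w_n^{i+1,j}))$, and only at points $(i_0,j_0)$ such that $Q_{\ln}(i_0,j_0)\subset\Omega$ (because the definition of $\I^n(\Omega)$ requires $\ol Q_{\ln}(i,j),\ol Q_{\ln}(i+1,j)\subset\Omega$). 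Hence
\[
\frac{1}{2\en}\ln^2\!\!\!\sum_{(i,j)\in\I^n(\Omega)}\!\!\bigl(W(w_n^{i,j})+W(w_n^{i+1,j})\bigr)\leq \integral{\Omega}{\frac{1}{\en}W(\de_1^{\mathrm{d}}\varphi_n(x))}{\d x}.
\]
For the gradient term, set $f_n(s):=\sqrt{2/\dn}\sin\bigl((\arccos(1-\dn)/2)s\bigr)$, so that by construction $w_n^{i,j}=f_n(\de_1^{\mathrm{d}}\varphi_n^{i,j})$. The fundamental theorem of calculus yields $\de_1^{\mathrm{d}}w_n^{i,j}=\xi_n^{i,j}\,\de_{11}^{\mathrm{d}}\varphi_n^{i,j}$ with
\[
\xi_n^{i,j}:=\int_0^1 \! f_n'\bigl(\de_1^{\mathrm{d}}\varphi_n^{i,j}+t\ln\,\de_{11}^{\mathrm{d}}\varphi_n^{i,j}\bigr)\d t.
\]
Since $f_n'(s)=\bigl(\arccos(1-\dn)/\sqrt{2\dn}\bigr)\cos\bigl((\arccos(1-\dn)/2)s\bigr)$, the $L^\infty$ bound $|\de_1^{\mathrm{d}}\varphi_n^{i,j}|\leq C_0$ from~\eqref{eq:discrete d of phi bounded} together with $\arccos(1-\dn)/\sqrt{2\dn}\to 1$ and $\arccos(1-\dn)\to 0$ force $\xi_n^{i,j}\to 1$ uniformly in $(i,j)$. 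Combining with~\eqref{eq:theta and discrete derivative}, \eqref{eq:theta goes to 0} and Lemma~\ref{lemma:rho is close to 1} we also obtain $(\rho_n^{\mathrm{hor}})^{i,j}\to 1$ uniformly, so that $\epsilon_n:=\sup_{(i,j)}\bigl|(\rho_n^{\mathrm{hor}})^{i,j}(\xi_n^{i,j})^2-1\bigr|\to 0$. Since $Q_{\ln}(i,j)\subset\Omega$ whenever $(i,j)\in\I^n(\Omega)$, this yields
\[
\en\ln^2\!\!\!\sum_{(i,j)\in\I^n(\Omega)}\!\!(\rho_n^{\mathrm{hor}})^{i,j}|\de_1^{\mathrm{d}}w_n^{i,j}|^2 \leq (1+\epsilon_n)\integral{\Omega}{\en|\de_{11}^{\mathrm{d}}\varphi_n(x)|^2}{\d x}.
\]

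Adding the two bounds and setting $r_n:=\epsilon_n/(1+\epsilon_n)\to 0$ proves the first inequality in~\eqref{eq:bound from below with H}; the second is obtained identically. The main technical point is the uniform convergence to $1$ of the multiplicative correctors $\xi_n^{i,j}$ and $(\rho_n^{\mathrm{hor}})^{i,j}$, which hinges on the global uniform bound on $\nabla^{\mathrm{d}}\varphi_n$ established in Step~1 of Proposition~\ref{prop:discrete MM}; without this $L^\infty$ control one could not ensure that both the $\cos$ factor in $f_n'$ and the argument of $\rho$ stay in a region where the corresponding limits are uniform.
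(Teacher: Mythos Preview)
Your proof is correct and follows essentially the same route as the paper's: both use Lemma~\ref{lemma:H is a discrete MM} to split $\Hh_n$ into potential and gradient parts, bound the potential via $W(w_n)=\tilde W_n(\de_1^{\mathrm{d}}\varphi_n)\leq W(\de_1^{\mathrm{d}}\varphi_n)$, and bound the gradient using that $(\rho_n^{\mathrm{hor}})^{i,j}\to 1$ uniformly together with a Lipschitz-type control of $\de_1^{\mathrm{d}}w_n$ by $\de_{11}^{\mathrm{d}}\varphi_n$. The only cosmetic difference is that you use the fundamental theorem of calculus to produce the explicit corrector $\xi_n^{i,j}$, whereas the paper uses the 1-Lipschitz bound on $\sin$ directly and absorbs the factor $\arccos(1-\dn)/\sqrt{2\dn}\to 1$ into $r_n$; both amount to $f_n'\to 1$ uniformly on $[-C_0,C_0]$.
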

\begin{proof}
   We only prove the claim for $\Hh_n(w_n,z_n;\Omega)$, the inequality for $\Hv_n(w_n,z_n;\Omega)$ being analogous.

   We start by finding a relation between $\de^{\mathrm{d}}_1 w_n$ and $\de_{11}^{\mathrm{d}} \varphi_n$. Using the definition of $w_n$, cf.~\eqref{eq:order parameter} and~\eqref{eq:theta and discrete derivative}, we obtain that
   \begin{equation*}
    \begin{split}
        \big| \de^{\mathrm{d}}_1 w_n^{i,j} \big| & = \Big| \frac{w_n^{i+1,j} - w_n^{i,j}}{\ln} \Big| = \frac{\sqrt{2}}{ \sqrt{\dn} \ln } \Big| \sin \Big( \frac{1}{2} (\theth_n)^{i+1,j} \Big) - \sin \Big( \frac{1}{2} (\theth_n)^{i,j} \Big) \Big| \\
        &\leq \frac{\arccos(1-\dn)}{ \sqrt{2 \dn} \ln } | \de^\mathrm{d}_1 \varphi_n^{i+1,j} - \de^\mathrm{d}_1 \varphi_n^{i,j}  | = \frac{\arccos(1-\dn)}{ \sqrt{2 \dn}} |\de^\mathrm{d}_{11} \varphi_n^{i,j}| \, ,
    \end{split}
   \end{equation*}

   We recall that $(\rho_n^\mathrm{hor})^{i,j} = \rho((\theth_n)^{i,j},(\theth_n)^{i+1,j})$, as in Lemma~\ref{lemma:H is a discrete MM}. By Lemma~\ref{lemma:rho is close to 1} and~\eqref{eq:theta goes to 0}, we infer that $(\rho^{\mathrm{hor}}_n)^{i,j} \to 1$ as $n \to \infty$, uniformly in $(i,j)$.  This yields
    \begin{equation} \label{eq:absurd trigonometric factor}
            \big|\de_{11}^{\mathrm{d}} \varphi_n^{i,j} \big|^2   \geq (1-r_n) (\rho^{\mathrm{hor}}_n)^{i,j} \big|\de_{1}^{\mathrm{d}} w_n^{i,j}\big|^2 ,
    \end{equation}
    for some sequence $r_n \to 0$. Note that $(1-r_n) (\rho^{\mathrm{hor}}_n)^{i,j} \geq 0$ and $(1-r_n) (\rho^{\mathrm{ver}}_n)^{i,j} \geq 0$  for~$n$ large enough.  
   
    By~\eqref{eq:bound from below of W}, \eqref{eq:W on w and z}, \eqref{eq:absurd trigonometric factor}, and Lemma~\ref{lemma:H is a discrete MM} we obtain that
    \begin{equation*}
        \begin{split}
            & \integral{\Omega}{\frac{1}{\en} W(\de_1^{\mathrm{d}} \varphi_n(x)) + \en |\de_{11}^{\mathrm{d}} \varphi_n(x)|^2 }{\d x} 
             \geq \integral{\Omega}{\frac{1}{\en} W(w_n(x)) + \en (1-r_n) \rho^\mathrm{hor}_n(x) \big|\de_{1}^{\mathrm{d}} w_n(x)\big|^2 }{\d x} \\
            & \quad \geq \frac{1}{2\en} \ln^2 \hspace{-1em} \sum_{(i,j) \in \I^n(\Omega)} \hspace{-1em}   W(w_n^{i,j})  + W(w_n^{i+1,j}) + \en \ln^2 \hspace{-1em}  \sum_{(i,j) \in \I^n(\Omega)} \hspace{-1em} (1-r_n) (\rho_n^\mathrm{hor})^{i,j} \big| \de^{\mathrm{d}}_{1} w_n^{i,j} \big|^2  \\
            & \quad \geq   (1-r_n) \Hh_n(w_n,z_n;\Omega) \, .             
        \end{split}
    \end{equation*}
    This concludes the proof.
\end{proof}

Thanks to Proposition~\ref{prop:convergence of w,z}, \eqref{eq:Poliakovsky applied}, \eqref{eq:discrete Poliakovsky}, \eqref{eq:bound from below with H}, and since $H_n = \Hh_n + \Hv_n$, we have proved~\eqref{eq:almost gamma limsup}. This concludes the proof of the  limsup  inequality, i.e., Theorem~\ref{thm:main}-(iii). 

\section{Appendix}
 
We recall here the definition of weakly Lipschitz sets and some of their properties.

\begin{definition} \label{def:weakly lipschitz}
	A bounded open set $\Omega \subset \RR^d$ is a {\em weakly Lipschitz set} if every $x \in \de \Omega$ has a closed neighborhood $U_x \subset \RR^d$ such that there exists a bi-Lipschitz map  $\Psi_x \colon U_x \to [-1,1]^d$ satisfying
	\begin{align*}
		\Psi_x(x) & = 0 \, ,\\
		\Psi_x(\Omega \cap U_x) & = [-1,1]^{d-1} \x [-1,0) \, ,\\
		\Psi_x(\de \Omega \cap U_x) & = [-1,1]^{d-1} \x \{ 0 \} \, \\
		\Psi_x(U_x \sm \ol \Omega) & = [-1,1]^{d-1} \x (0, 1] \, .
	\end{align*}  
\end{definition}

\begin{proposition} \label{prop:extension from weakly Lipschitz sets}
	Let $\Omega \subset \RR^d$ be a bounded, open, weakly Lipschitz set. Then, for every $p \in [1,\infty]$, $\Omega$ is an extension domain for $W^{1,p}$, i.e., there exists a linear and continuous operator $\mathcal{E} \colon W^{1,p}(\Omega) \to W^{1,p}(\RR^d)$ satisfying $\mathcal{E}(u)|_\Omega = u$ for every $u \in W^{1,p}(\Omega)$. 
\end{proposition}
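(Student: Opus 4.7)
The plan is to adapt the classical extension argument for Lipschitz domains (see, e.g., Chapter~3 of~\cite{AmbFusPal}) to the weakly Lipschitz setting, localizing via the bi-Lipschitz charts $\Psi_x$ provided by Definition~\ref{def:weakly lipschitz}. Since $\ol \Omega$ is compact, I would first extract a finite cover: an open set $V_0$ with $V_0 \subcc \Omega$ together with finitely many boundary neighborhoods $U_{x_1},\dots,U_{x_N}$ from Definition~\ref{def:weakly lipschitz}, chosen so that $\ol \Omega \subset V_0 \cup \bigcup_{i=1}^N \mathrm{int}(U_{x_i})$. I would then fix a smooth partition of unity $\{\zeta_0,\zeta_1,\dots,\zeta_N\}$ subordinate to this cover, with $\mathrm{supp}(\zeta_0) \subcc \Omega$ and $\mathrm{supp}(\zeta_i) \subcc \mathrm{int}(U_{x_i})$ for $i \geq 1$.

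Given $u \in W^{1,p}(\Omega)$, the piece $\zeta_0 u$ extends by zero to an element of $W^{1,p}(\RR^d)$ with norm controlled by $\|u\|_{W^{1,p}(\Omega)}$. For each boundary piece $i \geq 1$, I would pull $\zeta_i u$ back via $\Psi_{x_i}^{-1}$ to obtain $v_i := (\zeta_i u) \circ \Psi_{x_i}^{-1}$ on the half-cube $[-1,1]^{d-1} \times [-1,0)$, whose $W^{1,p}$-norm is controlled by the Lipschitz constants of $\Psi_{x_i}$ and $\Psi_{x_i}^{-1}$ via the change-of-variables formula for Sobolev maps. By construction, $v_i$ has compact support inside $(-1,1)^{d-1} \times [-1,0]$. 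Reflecting across the flat face $\{y_d=0\}$ produces
\begin{equation*}
    \tilde v_i(y',y_d) := \begin{cases} v_i(y',y_d) & \text{if } y_d < 0, \\ v_i(y',-y_d) & \text{if } y_d > 0, \end{cases}
\end{equation*}
which belongs to $W^{1,p}([-1,1]^d)$ with norm controlled by that of $v_i$. Pulling back once more via $\Psi_{x_i}$ and extending by zero outside $U_{x_i}$ (which is legitimate because the support of $\tilde v_i$ stays away from $\de([-1,1]^d) \sm ([-1,1]^{d-1}\times\{0\})$) yields $\tilde u_i \in W^{1,p}(\RR^d)$.

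Setting $\mathcal{E}(u) := \zeta_0 u + \sum_{i=1}^N \tilde u_i$ defines a linear operator. On $\Omega$ the reflection acts as the identity on the lower half-cube, and the two bi-Lipschitz changes of variable cancel, so $\tilde u_i|_\Omega = \zeta_i u$ and hence $\mathcal{E}(u)|_\Omega = \sum_{i=0}^N \zeta_i u = u$. Continuity follows by summing the bounds across the finitely many local steps. The case $p=\infty$ is obtained by the same construction, since bi-Lipschitz pullback, multiplication by smooth cutoffs, and reflection all preserve the class of Lipschitz functions.

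The main analytical obstacle is the justification that composing a $W^{1,p}$ function with a bi-Lipschitz homeomorphism yields a $W^{1,p}$ function whose weak gradient is given almost everywhere by the chain rule, together with the corresponding norm bound in terms of the Lipschitz constants of the map and its inverse; this rests on Rademacher's theorem and the area formula and must be verified uniformly for all $p \in [1,\infty]$. A secondary, but crucial, technical point is to enforce that each $\zeta_i$ is compactly supported inside $\mathrm{int}(U_{x_i})$, so that after pullback, reflection, and back-pullback the resulting function has compact support inside $U_{x_i}$ and truly extends across $\de U_{x_i}$ by zero without creating a jump — without this care, the gluing would fail to lie in $W^{1,p}(\RR^d)$.
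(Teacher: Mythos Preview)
Your proposal is correct and follows essentially the same route as the paper: localize via a partition of unity, pull back through the bi-Lipschitz chart $\Psi_{x_i}$ to the half-cube, extend by even reflection across $\{y_d=0\}$, then push forward and extend by zero. The paper streamlines the presentation by absorbing the partition-of-unity step into a single sentence and then treating one boundary chart, and it cites \cite[Theorem~11.51]{Leo} for exactly the bi-Lipschitz change-of-variables issue you flag as the main analytical obstacle.
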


\begin{proof}
Using classical arguments involving a partition of unity, the proof is reduced to the case where the support of $u$ lies inside a set of the form $\ol \Omega \cap \mathring U_x$, where $U_x$ is a closed neighborhood of a point $x \in \de \Omega$ with the properties in Definition~\ref{def:weakly lipschitz}. Let $\Psi_x$ be the bi-Lipschitz transformation given therein.

We define the function $v$ on $Q^- := (-1,1)^{d-1} \x (-1,0)$ by $v(x) = u(\Psi_x^{-1}(x))$. By~\cite[Theorem~11.51]{Leo} we have $v \in W^{1,p}(Q^-)$ and $\| v \|_{W^{1,p}(Q^-)} \leq C \| u \|_{W^{1,p}(\Omega)}$. 

We extend $v$ to a function defined almost everywhere on $Q := (-1,1)^d$ by reflection, i.e., we set
\begin{equation*}
	\tilde v (x) := 
	\begin{cases}
		v(x) & \text{if } x \in Q^-  , \\
		v(x_1, \dots , x_{d-1} , -x_d) & \text{if } x \in Q^+ := (-1,1)^{d-1} \x (0,1) \, .
	\end{cases}
\end{equation*}
Using again~\cite[Theorem~11.51]{Leo}, we have $\tilde v \in W^{1,p}(Q)$ with $\| \tilde v \|_{W^{1,p}(Q)} \leq C \| u \|_{W^{1,p}(\Omega)}$, since $\tilde v |_{Q^-}$ and $\tilde v |_{Q^+}$ have the same trace on $(-1,1)^{d-1} \x \{0\}$. Note moreover that $\tilde v$ is compactly supported in $Q$, since the support of $u$ is contained in~$\ol \Omega \cap \mathring U_x$.

Finally, let us set
\begin{equation*}
	\mathcal{E}(u) =
	\begin{cases}
		\tilde v \circ \Psi_x & \text{in } \mathring U_x \, , \\
		0 & \text{outside of } \mathring U_x \, .
	\end{cases}
\end{equation*}
Since $\tilde v$ has compact support in $Q$ and invoking once again~\cite[Theorem~11.51]{Leo}, it follows that $\mathcal{E}(u) \in W^{1,p}(\RR^d)$ and $\| \mathcal{E}(u) \|_{W^{1,p}(\RR^d)} \leq C \| \tilde v \|_{W^{1,p}(Q)} \leq C \| u \|_{W^{1,p}(\Omega)}$. We also clearly have $\mathcal{E}(u) |_\Omega = u$.
\end{proof}

\begin{remark} \label{rmk:rellich-kondrachov in weakly lipschitz}
	From the previous proposition it follows that the Poincar\'e-Wirtinger Inequality holds true in $W^{1,p}(\Omega)$ if $\Omega$ is a bounded, open, connected, weakly Lipschitz set. Indeed, this inequality holds true for connected extension domains, see, e.g., \cite[Theorem~12.23]{Leo} for the details. 
\end{remark}

\begin{proposition} \label{prop:bounded gradient and Lipschitz boundary}
    Let $\Omega \subset \RR^d$ be a bounded, open, weakly Lipschitz set, let $u \in L^\infty_{\mathrm{loc}}(\Omega)$ and assume that its distributional gradient $\nabla u$ belongs to $L^\infty(\Omega;\RR^d)$. Then $u \in L^\infty(\Omega)$. 
\end{proposition}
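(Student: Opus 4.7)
The plan is to combine a finite cover of $\ol \Omega$ by boundary charts from Definition~\ref{def:weakly lipschitz} with the elementary fact that on a convex bounded set a function whose distributional gradient lies in $L^\infty$ and which is bounded on some non-empty open subset must be essentially bounded. By compactness of $\de \Omega$, I first select finitely many neighborhoods $\mathring U_{x_1}, \dots, \mathring U_{x_N}$ covering $\de \Omega$ in such a way that $K := \ol \Omega \sm \bigcup_k \mathring U_{x_k}$ is a compact subset of $\Omega$; consequently there exists an open set $V \subcc \Omega$ containing $K$, and the assumption $u \in L^\infty_{\mathrm{loc}}(\Omega)$ yields $\|u\|_{L^\infty(V)} < \infty$. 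Since $\Omega \subset V \cup \bigcup_k \mathring U_{x_k}$, it suffices to bound $u$ on each $\Omega \cap \mathring U_{x_k}$.

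Fixing such a $k$, I transport the problem to the half-cube $Q^- := (-1,1)^{d-1} \x (-1,0)$ via $\Psi := \Psi_{x_k}$, setting $v := u \circ \Psi^{-1}$ on $Q^-$. Since $\Psi^{-1}$ is Lipschitz and $u \in W^{1,1}_{\mathrm{loc}}(\Omega)$ (because $u$ and $\nabla u$ are both locally integrable), the chain rule for Sobolev functions under bi-Lipschitz changes of variables gives $v \in W^{1,1}_{\mathrm{loc}}(Q^-)$ with $\|\nabla v\|_{L^\infty(Q^-)} \leq \mathrm{Lip}(\Psi^{-1}) \|\nabla u\|_{L^\infty(\Omega)}$. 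Picking a ball $B_r(y_0) \subset \Omega \cap \mathring U_{x_k}$ on which $u$ is essentially bounded---available by the $L^\infty_{\mathrm{loc}}$ assumption---the set $\Psi(B_r(y_0))$ is an open subset of $Q^-$ on which $v$ is essentially bounded.

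To conclude, I invoke the classical fact that on the convex open set $Q^-$ any function in $W^{1,1}_{\mathrm{loc}}$ with essentially bounded gradient admits a Lipschitz representative with Lipschitz constant at most $\|\nabla v\|_{L^\infty(Q^-)}$; this is obtained by mollifying $v$ on convex subdomains and applying the mean value inequality along straight segments, then passing to the limit. Combined with the pointwise bound of $v$ on $\Psi(B_r(y_0))$ and with $\mathrm{diam}(Q^-) \leq 2\sqrt{d}$, this gives $v \in L^\infty(Q^-)$ and therefore $u \in L^\infty(\Omega \cap U_{x_k})$. Summing over $k$ together with the bound on $V$ yields $u \in L^\infty(\Omega)$. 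The only delicate point is to verify the chain rule and the Lipschitz identification for a function only a priori in $L^\infty_{\mathrm{loc}}$ with $L^\infty$ gradient, but upgrading to $W^{1,1}_{\mathrm{loc}}$ is immediate and from there both facts are standard.
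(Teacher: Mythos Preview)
Your proof is correct and follows essentially the same route as the paper's: both arguments cover $\ol\Omega$ by boundary charts, transport $u$ to the convex half-cube $Q^-=(-1,1)^{d-1}\times(-1,0)$ via the bi-Lipschitz map $\Psi_{x_k}$, observe that the transported function has bounded gradient there, and use convexity of $Q^-$ to upgrade this to a Lipschitz (hence bounded) representative. The paper phrases the last step by first working on convex $A\subcc Q^-$ (so that $u\in W^{1,\infty}(\Psi_x^{-1}(A))$ is immediate from the hypotheses) and then noting the Lipschitz constant is independent of $A$, while you work directly on $Q^-$ and anchor the bound via a small ball where $u$ is essentially bounded; these are minor presentational differences only.
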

\begin{proof}

Thanks to a standard covering argument on the compact set $\overline \Omega$, it is enough to show that for every $x \in \de \Omega$ the function $u$ is bounded in $\Omega \cap U_x$, $U_x$ being some neighborhood of~$x$. 
	Given $x \in \de \Omega$, let $U_x$ and $\Psi_x$ be as in Definition~\ref{def:weakly lipschitz}. Let~$A$ be an open, convex set such that $A \subcc (-1,1)^{d-1} \x (-1,0)$, so that $\Psi_x^{-1}(A) \subcc \Omega \cap \mathring U_x$. Since $u \in W^{1,\infty}(\Psi_x^{-1}(A))$, by~\cite[Theorem~11.51]{Leo} we have $u \circ \Psi_x^{-1} \in W^{1,\infty} (A)$, and thus \cite[Proposition~2.13]{AmbFusPal} implies that $u \circ \Psi_x^{-1}$ is Lipschitz in $A$ with Lipschitz constant $\|\nabla (u \circ \Psi_x^{-1}) \|_{L^\infty}$ (independent of~$A$). By convexity of $(-1,1)^{d-1} \x (-1,0)$ we deduce that~$u \circ \Psi_x^{-1}$ is Lipschitz on the whole $(-1,1)^{d-1} \x (-1,0)$ and this implies, in particular, that~$u$ is bounded on~$\Omega \cap \mathring U_x$.
\end{proof}
	
\noindent {\bf Acknowledgments.}  The work of M.\ Cicalese was supported by the DFG Collaborative Research Center TRR 109, ``Discretization in Geometry and Dynamics''. G.\ Orlando has been supported by the Alexander von Humboldt Foundation.

\bigskip

\end{document}